  \crefname{theorem}{Theorem}{Theorems}
  \crefname{thm}{Theorem}{Theorems}
  \crefname{thm*}{Theorem*}{Theorems}
  \crefname{lemma}{Lemma}{Lemmas}
  \crefname{lem}{Lemma}{Lemmas}
  \crefname{remark}{Remark}{Remarks}
  \crefname{prop}{Proposition}{Propositions}
\crefname{notation}{Notation}{Notations}
\crefname{claim}{Claim}{Claims}
  \crefname{defn}{Definition}{Definitions}
  \crefname{corollary}{Corollary}{Corollaries}
  \crefname{section}{Section}{Sections}
  \crefname{figure}{Figure}{Figures}
    \crefname{assumption}{Assumption}{Assumptions}
\newtheorem{thm}{Theorem}[section]
\newtheorem{thm*}{Theorem*}[section]
\newtheorem{lemma}[thm]{Lemma}
\newtheorem{corollary}[thm]{Corollary}
\newtheorem{prop}[thm]{Proposition}
\newtheorem{defn}[thm]{Definition}
\numberwithin{equation}{section}
\theoremstyle{definition}
\newtheorem{remark}[thm]{Remark}
\def\cT{\mathcal{T}}
\def\cR{\mathcal{R}}
\def\cQ{\mathcal{Q}}
\def\cL{\mathcal{L}}
\def\cH{\mathcal{H}}
\def\cG{\mathcal{G}}
\def\cD{\mathcal{D}}
\def\cC{\mathcal{C}}
\def\cB{\mathcal{B}}
\def \ve {\varepsilon}
\def\P{\mathbb{P}}
\def\E{\mathbb{E}}
\def\C{\mathbb{C}}
\def\R{\mathbb{R}}
\def\Z{\mathbb{Z}}
\def  \p- {p\textunderscore}
\def \d {{\# \delta}}
\def\Euc{\textsf{euc}}
\DeclareMathOperator{\diam}{Diam}
\newcommand{\abs}[1]{ \lvert #1 \rvert}
\newcommand{\crsw}{c_{\mathsf{RSW}}}
\def \Ce {C_{\textsf{Euc}}}
\begin{document}
 \title{Quantitative Russo--Seymour--Welsh for random walk on random graphs and decorrelation of UST}

\author{Gourab Ray\thanks{University of Victoria, supported in part by NSERC 50311-57400, \textsf{gourabray@uvic.ca}} \and Tingzhou Yu  \thanks{University of Victoria, \textsf{tingzhou.yui@gmail.com}}}

\maketitle
\abstract{We prove a quantitative Russo--Seymour--Welsh (RSW) type result for random walks on two natural examples of random planar graphs: the supercritical percolation cluster in $\Z^2$ and the Poisson Voronoi triangulation in $\R^2$. More precisely, we prove that the probability that a simple random walk crosses a rectangle in the hard direction with uniformly positive probability is stretched exponentially likely in the size of the rectangle. As an application we prove a near optimal decorrelation result for uniform spanning trees for such graphs. This is the key missing step in the application of the proof stretegy of \cite{BLR16} for such graphs (in \cite{BLR16}, random walk RSW was assumed to hold with probability 1). Applications to almost sure Gaussian free field scaling limit for dimers on Temperleyan type modification on such graphs are also discussed.}
\section{Introduction}\label{sec:intro}
It is now an established fact that the Russo--Seymour--Welsh theory lies at the heart of two dimensional statistical physics models, particularly those which are believed to be conformally invariant in their scaling limits. Although the major application of this idea has been in percolation theory (see e.g. \cite{grimmett1999percolation,duminil2017lectures,duminil_sixty} for a broad overview), recently, this concept was used in \cite{BLR16} to study decorrelation of uniform spanning trees in a  general setting. Very roughly, this type of estimate leads to rough Harnack type inequality and also Beurling type hitting estimates. This ultimately led to a result which establishes scaling limits of dimer height functions to a Gaussian free field on a fairly general class of graphs. This is later extended to graphs on multiply connected Riemann surfaces as well \cite{BLR_riemann1,BLR_riemann2}.

In this program, two key assumptions are made on the graph. The first assumption is that random walk on the graph must converge to Brownian motion, which can be thought of as an assumption on the macroscopic symmetries of the random walk under conformal mappings.  This assumption is usually robust under reasonable perturbations of the underlying graph. The second assumption is that for any rectangle larger than a \emph{fixed scale}, a random walker crosses it without exiting the rectangle with a probability uniform in the scale and location, and depending only on the aspect ratio (see \cref{def:RSW}). The second assumption was called a Russo--Seymour--Welsh (RSW) type assumption in \cite{BLR16}. It can be checked that this holds for all standard lattices (more generally for isoradial graphs with unifomly elliptic angles), and even holds if we put some uniformly elliptic random environment on them (see \cite[Section 1.1]{BLR16} for a detailed discussion). Let us remark that the RSW assumption is in some sense related to the uniformity in the rate of convergence of the random walk to a Brownian motion depending on the location of the graph. Indeed, for this reason, RSW for the square lattice for example is a simple consequence of the invariance principle. On the other hand in the presence of some local irregularities, it is not  clear at all if such an estimate is even true.

 The goal of this article is to extend the random walk RSW result to random planar graphs (with a natural embedding in $\Z^2$) which are not necessarily `uniformly elliptic' in the sense of the examples considered so far. The key examples we handle in this article are the unique infinite cluster of a Bernoulli bond percolation on $\Z^2$ and a Poisson Voronoi triangulation. It can be easily seen that in both these cases, RSW does not hold deterministically for rectangles larger than any fixed scale uniformly over the location of the graph (for example, an arbitrarily large rectangle is empty at some location almost surely). However, we show in \cref{thm:RSW_Poisson,thm:RSW_perc} that RSW holds with a probability which is stretched exponentially high in the scale. 
 
 One major application of the RSW assumption in \cite[Theorem 4.21]{BLR16} was to prove a macroscopic decorrelation result for uniform spanning trees in the following sense. Suppose $D $ is a simply connected domain and we take a Uniform spanning tree $\cT$ with wired boundary condition for a graph with small `mesh size' $\delta$ in it. Fix two points $x,y$ in $D$. Then one can couple two independent USTs $\cT_1, \cT_2$ (on a possibly bigger domain) with $\cT$ so that $\cT_1$ and $\cT$ agree on a small but random neighbourhood of $x$, and $\cT_2$ and $\cT$ agree on a small but random neighbourhood of $y$. Furthermore, this random neighbourhood has macroscopic radius, in the sense that the radius dominates a random variable which is independent of $\delta$ for all small enough $\delta$. Furthermore, one can obtain a polynomial bound on the lower tail of the radius. The same result holds not just for two but for any finite number of points. In this article, we extend this result to UST on random graphs in \cref{thm:coupling}. In particular, we show that a similar coupling can be obtained for a collection of graphs which has high probability (this can be extended to an a.s. statement along a subsequence of $\delta$, see \cref{a.s.})
 
 Another consequence of the quantitative RSW and \cref{thm:coupling} is a scaling limit result for dimer height function on such random planar graphs. Suppose we take a random planar graph satisfying the quantitative RSW and some other mild assumptions (which are satisfied by the infinite cluster of Bernoulli bond percolation and $\Z^2$ and the Poisson Voronoi triangulation). There is a natural way to add a dual to this graph so that we obtain a \emph{Temperleyan graph} which admits a dimer cover. Then following the stratregy of \cite{BLR16}, it can be shown that almost surely on the graph the height function of this dimer model converges to Gaussian free field. More details and discussions on this can be found in \cref{sec:dimer}.
 
 The main input for the random walk RSW results is a result by Barlow \cite{Barlow_perc}, which states that a quadratic volume growth and Poincar\'{e} inequality ensures a good heat kernel bound for random walks. The Poincar\'{e} inequality is an analytic criterion. One can establish this inequality with a good control on the volume growth and isoperimetric constant on the graph. We collect these geometric criterions in \cref{lem:crossing_criterion}. We need to be a bit more careful than the treatment in Barlow \cite{Barlow_perc}  as there is no uniform bound on degree in our assumption (which was assumed in \cite{Barlow_perc} as the main motivation there was to study heat kernel bounds for percolation in $\Z^d$). Next, we show that the criterions in \cref{lem:crossing_criterion} hold for the main two applications in this article: unique infinite cluster of Bernoulli percolation and Voronoi triangulation. A key input is a quantitative isoperimetric inequality for Voronoi triangulation (\cref{lem:iso_2}), which we consider to be another novel contribution of this article. The coupling result of \cref{sec:UST} is established by first proving that the diameters of the `bad regions' where RSW does not hold is small with high probability, and then applying the techniques established in \cite{BLR16}. Finally in \cref{sec:dimer} we explain how the results in this article can be used to prove a Gaussian free field scaling limit of the dimer model on random graphs, which hold a.s. on the graph chosen.

\paragraph{Acknowledgement:} We thank Benoit Laslier for several useful discussions.

\paragraph{Notation:} Given a graph $G$, we denote by $V(G)$ its vertex set and $E(G)$ its edge set. Let $\Lambda_n$ denote the square $[-n,n]^2$ with $\Lambda_n(x) = x+\Lambda_n$. Sometimes we will also deal with rectangles $\Lambda_{m,n} = [-m,m] \times [-n,n]$ and similarly $\Lambda_{m,n}(z) = z+\Lambda_{m,n}$. For $S \subset V(G)$, denote by $|S|_E$ the sum of the weights of the edges incident to vertices of $G$ in $S$, while $|S|$ simply denotes the number of vertices in $S$. With a slight abuse of notation, for any $S \subset \R^2$, we use $|S|_E$ to denote the sum of the weights of the edges which  is incident to a vertex in $S$, and $|S|$ to denote the cardinality of the set of vertices in $S$. Let $\deg (v)$ denotes the degree of the vertex $v$.

\section{A general criterion for RSW}\label{sec:RSW_general}
In this section, we work with a fixed graph $G =(V,E)$ embedded in $\R^2$. The goal of this article is to summarize certain geometric properties of the graph which ensures that a simple random walk on it behaves in a nice manner. The main quantity of interest is the following.

\begin{defn}\label{def:RSW}
For $c>0$, we say that $\Lambda_{3m,m}(z)$ is \textbf{$c$-crossable} if for every $x \in B_1:=\Lambda_{m/2}(z+(-2m,0))$ and  with $ B_2:=\Lambda_{m/2}(z+ (2m,0))$, 
\begin{equation}
\P(Y \text{ started from $x$ enters $B_2$ before exiting $\Lambda_{3m,m}(z)$}) \ge c.\label{eq:C-crossable}
\end{equation}
where $Y$ is a simple random walk.
\end{defn}
See \cref{fig:crossing}. Notice that although the event is stated in terms of a random walk, it is in fact a statement about the geometry of the graph inside the rectangle. We point out that in \cite{BLR16}, it was assumed that there exists a $c>0$ depending only on the graph such that every rectangle beyond a certain scale $m \ge \delta_0^{-1}$ centred at any $z$ was $c$-crossable, and this assumption was called `\emph{uniform crossing}'. 
\begin{figure}[h]
\centering
\includegraphics[width = 0.5\textwidth]{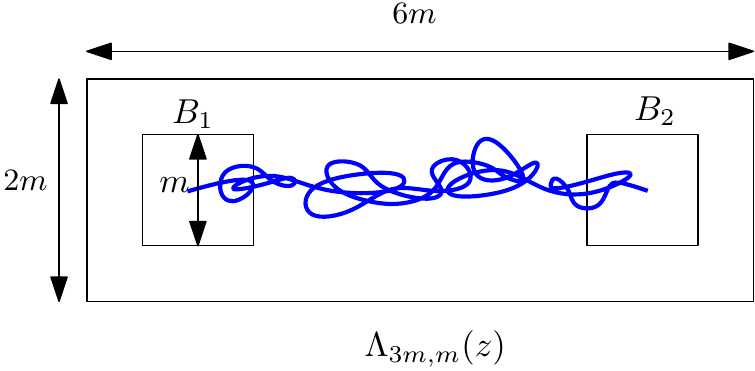}
\caption{The rectangle is $c$ crossable if an event like the above has probability at least $c$.}\label{fig:crossing}
\end{figure}

Let $G = (V, E)$ be a finite planar graph, properly embedded in $\R^2$. For a function $f:V\mapsto \R$, denote by $\nabla f$ to be a function from the oriented edges of the graph to $\R$, satisfying $$\nabla f((e_-, e_+)) = f(e_+ ) - f(e_-).$$
We will denote by $|\nabla f|$ the function which takes absolute value of $\nabla f$ for each unoriented edge in $E$. We now borrow the notions of `good' and `very good' from \cite{Barlow_perc}. Let $o$ denote the vertex in $G$ which is closest to the origin in $\R^2$. Also let $d_G$ denote the graph distance in $G$ and let $B(x,r)$ denote the graph distance ball of radius $r$ centred around the vertex $x$.

\begin{defn} (\cite[Definition 1.7]{Barlow_perc})\label{def:good}
Let $C_P, C_V>0$ and $C_W \ge 1$ be fixed.
We say that $B(o,n)$ is $(C_P, C_V, C_W)$-\textbf{good} if it satisfies
\begin{equation}\tag{Vol}
|B(o, n)|_E \ge C_Vn^2.\label{eq:vol}
\end{equation}
and every $f :B(o,C_W n) \mapsto \R$ satisfies the \textbf{weak Poincar\'e inequality}, i.e.,
\begin{equation}\tag{P}
\sum_{v \in B(o,n) } (f(v) - \bar f)^2 \deg (v) \le C_Pn^2 \sum_{e \in E(B(o,C_Wn))} \abs{\nabla f (e)}^2 \label{eq:WP}
\end{equation}
where $\bar f = |V(B(o,n)) |_E^{-1}\sum_{v \in B(o,n)} f(v)\deg(v) $.
We say $B(o,n)$ is $(C_P, C_V, C_W)$-\textbf{very good} if there exists an integer $N_{B(o,n)} \le n^{1/4}$ such that every $B(y,r) \subset B(o,n)$ is good for every $N_B \le r\le n$. 
\end{defn}
We will sometimes drop the constants in the definition of good and very good when they are clear from the context.

%
%

We now extend the notion of good and very good to Euclidean squares. We say $\Lambda_n$  is $(\Ce, C_P,C_V,C_W)$-very good if and only if \begin{equation}
B(o, \Ce^{-1} n) \subset \Lambda_n \subset B(o, \Ce n)\label{eq:inclusion_good}
\end{equation}
and $B(o, \Ce n)$ is $(C_P,C_V,C_W)$-very good. The above inclusion is interpreted as follows: if a vertex of $G$ is outside  $B(o, \Ce n)$ (resp. inside $B(o, \Ce^{-1} n)$) then it is outside (resp. inside) $\Lambda_n$.
\begin{lemma}\label{lem:RSW_condition}
Suppose there exist constants $\Ce,C_P,C_V,C_W,c_0, d$ such that for all $n \ge 1$ the following is true.
\begin{enumerate}[a.]
\item $\Lambda_{n \log n}$ is $(\Ce,C_P,C_V,C_W)$-very good with $N_{\Lambda_{n \log n}} \le n^{1/8}$.
\item $|B_2| \ge dn^2$ where $B_2$ is as in \cref{def:RSW} and $m = n/4$.
\item The graph distance between any vertex in $\Lambda_{2.5m, m/2}$ and any vertex outside $  \Lambda_{3m,m}$ is at least $c_0 n$ and $m = n/4$.
\end{enumerate}
 Then there exists a constant $c = c(C_P, C_V, C_W, \Ce, c_0,d)$ such that $\Lambda_{3m,m}$ is $c$-crossable.
\end{lemma}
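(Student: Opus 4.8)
The strategy is to deduce, from the geometric hypotheses (a)–(c), a two-sided Gaussian heat kernel bound for the simple random walk killed on exiting $\Lambda_{3m,m}$, and then extract the crossing probability from the lower bound on the heat kernel together with the volume lower bound on $B_2$. The first and main step is to invoke Barlow's theorem (\cite{Barlow_perc}): the "very good" property in hypothesis (a), which bundles quadratic volume growth (Vol) with the weak Poincaré inequality (P) at all scales above $N_{\Lambda_{n\log n}}\le n^{1/8}$, yields that the heat kernel $p_k(x,y)$ of the walk on $B(o,\Ce n\log n)$ satisfies the standard sub-Gaussian (here Gaussian, since the volume exponent is $2$) estimates for all $k$ in a window roughly $[n^{1/4}, (n\log n)^2]$ and all $x,y$ with $d_G(x,y)\lesssim n\log n$. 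I would state this as an intermediate lemma (perhaps deferring its proof, which is essentially a careful rerun of Barlow's argument without the bounded-degree assumption, to a later section — the excerpt flags that \cref{lem:crossing_criterion} handles this). The point of the $n\log n$ (rather than $n$) scale in (a) is to guarantee that a walk started anywhere in $\Lambda_{3m,m}$ stays inside the good region $B(o,\Ce n\log n)$ for the entire relevant time window with overwhelming probability, so the killed heat kernel and the free one agree up to negligible error.

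The second step converts the heat kernel bound into the crossing event. Fix $x\in B_1$ and let $\tau$ be the exit time of $\Lambda_{3m,m}(z)$. By hypothesis (c), $d_G(x, \partial\Lambda_{3m,m})\ge c_0 n$ (since $B_1\subset\Lambda_{2.5m,m/2}$), and similarly any point of $B_2$ is at graph distance $\ge c_0 n$ from the complement; also $d_G(x,B_2)\le \Ce\cdot(\text{Euclidean distance})\lesssim n$ via \eqref{eq:inclusion_good}. Now run the walk for a time $k\asymp n^2$ chosen large enough that $p_k(x,y)\gtrsim n^{-2}$ uniformly over $y\in B_2$ (Gaussian lower bound, using $d_G(x,y)^2\lesssim n^2\asymp k$), yet small enough that $\P_x(\tau\le k)$ is bounded away from $1$ — the latter from the Gaussian upper bound on the heat kernel, which controls $\P_x(d_G(X_k,x)\ge c_0 n/2)$, combined with a standard maximal/chaining argument (or the strong Markov property with the "very good" estimate applied to balls along the way) to pass from a statement about $X_k$ to a statement about $\sup_{j\le k} d_G(X_j,x)$. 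Summing the lower bound over $y\in B_2$ and using hypothesis (b), $|B_2|\ge dn^2$:
\begin{equation*}
\P_x(X_k \in B_2,\ \tau>k) \;\ge\; \sum_{y\in B_2} p_k^{\Lambda_{3m,m}}(x,y)\,\deg(y)\,/\,(\text{normalization}) \;-\;\P_x(\tau\le k)\cdot(\cdots)
\end{equation*}
is bounded below by a constant depending only on $\Ce,C_P,C_V,C_W,c_0,d$; more carefully one first shows $\P_x(X_k\in B_2)\gtrsim (dn^2)\cdot(n^{-2})\gtrsim 1$ and then subtracts the (small, after choosing the constant in $k\asymp n^2$ appropriately) probability that the walk has already exited. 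A cleaner route that avoids the subtraction is to use the killed heat kernel lower bound directly: Barlow's estimates also give a near-diagonal lower bound for the heat kernel of the walk killed outside a ball $B(x, c_0 n)$ contained in $\Lambda_{3m,m}$, and since $B_2$ overlaps such a ball in a set of volume $\gtrsim n^2$, one gets $\P_x(X_k\in B_2,\ \tau > k)\gtrsim 1$ immediately. This gives crossability with the walk reaching $B_2$ before time $k$, which is stronger than \eqref{eq:C-crossable}.

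The main obstacle is the first step: establishing Barlow-type Gaussian heat kernel bounds without a uniform degree bound. Barlow's proof uses $\deg\le C$ in several places (comparing $|S|$ and $|S|_E$, controlling the measure of balls, the Nash/Moser iteration, and the chaining). One must track the degree-dependence and show that, under the "very good" hypothesis as stated (where volumes are measured in the edge-weighted sense $|\cdot|_E$ and the Poincaré inequality already uses $\deg(v)$ weights), the argument closes; this is presumably the content of \cref{lem:crossing_criterion} and the remark in the introduction that "we need to be a bit more careful than the treatment in Barlow." A secondary technical point is the passage from $X_k$-statements to $\sup_{j\le k}$-statements (the Beurling/maximal-inequality-type estimate), which needs the good property at the intermediate scales down to $N\le n^{1/8}$ — this is exactly why hypothesis (a) demands $N_{\Lambda_{n\log n}}\le n^{1/8}$ rather than merely $\le n^{1/4}$, leaving room below the $n^{1/4}$-scale at which the heat kernel bounds start to hold.
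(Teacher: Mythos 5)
Your plan correctly identifies the two ingredients (Barlow-type heat kernel bounds from ``very good,'' followed by a crossing argument via the volume of $B_2$), but the second step as you describe it has a genuine gap, and the gap is exactly the chaining.

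The subtraction $\P_x(X_k\in B_2)-\P_x(\tau\le k)$ cannot be made positive by tuning the constant in $k\asymp n^2$. For $x\in B_1$ and $y\in B_2$, the graph distance $d_G(x,y)$ is of order $\Ce^{-1}\cdot 3m\approx n$, while the distance from $\Lambda_{2.5m,m/2}$ to the killing boundary is only $\gtrsim c_0 n$ with $c_0$ a small constant (the Euclidean gap from $\Lambda_{2.5m,m/2}$ to $\partial\Lambda_{3m,m}$ is $m/2=n/8$, so $c_0\lesssim\Ce^{-1}/8$). These two scales are comparable, with the exit distance typically the \emph{smaller} of the two. If you take $k$ small enough that $\P_x(\tau\le k)$ is genuinely small (this requires $k\ll(c_0 n)^2$), then the Gaussian factor $e^{-c\,d_G(x,y)^2/k}$ makes $\P_x(X_k\in B_2)$ exponentially small as well. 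If instead you take $k\asymp n^2$ so that $\P_x(X_k\in B_2)\gtrsim 1$, then $\P_x(\tau\le k)$ is also of order one and nothing forces the difference to be positive. The same objection defeats your ``cleaner route'': the ball $B(x,c_0 n)$ simply does not contain, or even meet, $B_2$, so a Dirichlet lower bound on that ball says nothing about landing in $B_2$.

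What the paper does, and what is absent from your plan, is to carry out the subtraction \emph{only at a small spatial scale} $\delta n$ with $\delta\ll c_0$, and then \emph{chain}. Precisely, for $z\in B(x,\delta n)$ and $t=\delta^2 n^2$ one writes
\begin{equation*}
q_t^0(x,z)\;\ge\; q_t(x,z)\;-\;\sup_{0\le s\le t}\ \sup_{w\in\partial'} q_s(w,z),
\end{equation*}
with $\partial'$ the exterior boundary of $\Lambda_{3m,m}$. Here the first term is a \emph{near-diagonal} lower bound $\gtrsim 1/t$ because $d_G(x,z)\le\delta n=\sqrt t$, while the second is an \emph{off-diagonal} upper bound $\lesssim t^{-1}e^{-c' c_0^2/\delta^2}$ because $d_G(w,z)\ge c_0 n\gg\sqrt t$. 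Only at this small scale is there a genuine scale separation between target and boundary, and choosing $\delta$ small makes the boundary term a negligible fraction of the free kernel, giving $q_t^0(x,z)\ge c(\delta)/t$ for $z$ within $\delta n$ of $x$. This is not yet what you need, since $B_2$ is much farther away. The final and essential step is to lay down $O(\delta^{-1})$ balls of radius $\asymp\delta n$ inside $\Lambda_{2.5m,m}$ linking $B_1$ to $B_2$, and apply the Markov property together with the killed near-diagonal bound (and the volume hypotheses) at each step; this yields a crossing probability at least $c(\delta)^{O(\delta^{-1})}$, a constant depending only on the data. The chaining is not an auxiliary maximal inequality for $\sup_{j\le k}d_G(X_j,x)$; it is the mechanism by which the walk reaches $B_2$ at all. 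Your deferral of the Barlow estimates without a uniform degree bound to \cref{lem:crossing_criterion} is fine and matches what the paper does.
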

\begin{proof}
Let $Y$ be the continuous time random walk with $q_t^0$ denoting its density killed upon exiting $\Lambda_{3m,m}$. More precisely, denoting $\tau$ to be the infimum over times when $Y$ is not in $\Lambda_{3m,m}$, 
$$
q_t^0(x,y) = \P(Y_t = y, Y_0 = x, \tau>t); \qquad x,y \in \Lambda_{3m,m}.
$$ 
Using the same  argument as in \cite[Lemma 5.8]{Barlow_perc}, we can show that there exists a constant $c = c(C_P, C_V, C_W, \Ce, c_0)$ such that for any $x\in B_1$ and $y \in B_2$,
$$
q_t^0(x,y) \ge \frac{c}{t} \text{ for all }cn^2 \le t \le c^{-1}n^2
$$
Let us provide some details of this fact. We can write for any $z \in \Lambda_{2.5m,m/2}$ and any $t>0$,
\begin{align*}
q_t^0(x,z) & \ge 	q_t(x,z)  -\E_x (1_{\tau<t} q_{t-\tau}(Y_{\tau, z}))\\
& \ge q_t(x,z) - \sup_{0 \le s \le t} \sup_{w \in \partial'} q_s(w,z).
\end{align*}
where $\partial '$ denote the set of vertices outside $\Lambda_{3m,m}$ with at least one neighbour in $\Lambda_{3m,m}$. Now fix $\ve \in (0,1/8)$, $z \in B(x,\delta n)$ and $t = \delta^2 n^2$. This allows us to apply the heat kernel bound of \cite[Theorem 5.3]{Barlow_perc} to lower bound the first term $q_t(x,z)$ by   $t^{-1}ce^{-c'}$ (for ease of reference, we point out that we choose $x_0=x_1 = o$, $R\log R = \Ce n\log n$ in the notations of that theorem). On the other hand since the graph distance between any $w \in \partial '$ and $z \in \Lambda_{2.5m,m/2}$ is at least $c_0 n$ by the third item above, we can use \cite[Theorem 3.8]{Barlow_perc} to upper bound the second term. Namely, writing $s = \theta t$,
\begin{equation*}
 \sup_{0 \le s \le t} \sup_{w \in \partial'} q_s(w,z) \le \sup_{ 0 \le \theta \le 1}\frac1{\theta t} e^{-\frac{c'c_0^2n^2}{\theta \delta^2 n^2}} = \sup_{ 0 \le \theta \le 1}\frac1{\theta} e^{-\frac{c'c^2_0}{\theta \delta^2}} .
\end{equation*}
(Again for ease of reference, we point out that we choose $x_0 = o$, $R  = \Ce n\log n$ in the notations of  \cite[Theorem 3.8]{Barlow_perc}.)
Actually to be more precise for very small values of $\theta $ we go outside the range of times when \cite[Theorem 3.8]{Barlow_perc} is applicable, in which case we use the upper bound \cite[Lemma 1.1]{Barlow_perc} instead. Choosing $\delta$ small enough we obtain 
\begin{equation*}
q_t^0(x,z) \ge \frac{c(\delta)}{t} = \frac{c(\delta)}{\delta^2 n^2}.
\end{equation*}
Now we use the standard chaining argument. Namely, we choose a sequence of balls inside $\Lambda_{2.5m,m}$ of Volume $O(\delta^2 n^2)$ such that on the event that the walk iteratively lands on these sequence of balls without leaving $\Lambda_{2.5m,m}$, the walk enters $B_2$. Using the Markov property of the walk,  this event has probability at least $c(\delta)^{O(\delta^{-1})}$. 
The proof is complete as the probability of crossing is at least this constant.
\end{proof}
%

In light of \cref{lem:RSW_condition}, it is clear that in the setting of random graphs, we require a box to be very good with high probability. To that end, it is useful to find a geometric condition for a box being very good. While \eqref{eq:vol} is a very simple geometric condition, \eqref{eq:WP} is analytic. We present below a lemma which essentially states that a relevant Isoperimetric inequality implies \eqref{eq:WP}.

We now recall some relevant definitions regarding isoperimetry of general graphs. Take a finite, connected graph $H$. For any $A \subset V(H)$, let 
$$
i_H(A) := \frac{\partial_E(A, H \setminus A)}{|A|_E}.
$$
where $\partial_E(A, H \setminus A)$ denotes the collection of edges with one endpoint in $A$ and another in $H \setminus A$. We say $A$ is connected if the subgraph induced by $A$ is connected. Define the isoperimetric constant $I_H$ as 
$$
I_H := \inf \{i_H(A): 0<|A|_E \le \frac12 |H|_E : A \text{ and }H \setminus A \text{ are connected}\}.
$$
A subgraph $H'$ of $H$ is a graph induced by a subset of vertices of $H$. Notice that the definition of $I_{H'}$ ignores the edges not present in $E(H')$. Observe the assertion of connectedness in $A$ and $H \setminus A$ being connected is slightly non-standard, however \cite[Lemma 1.3]{Barlow_perc} ensures that removing this connectedness assertion only changes the constant by a factor of 2. This leads us to the following rephrasing of \cite[Proposition 1.4(a)]{Barlow_perc}:
\begin{lemma}\label{lem:iso_poincare}
Then there exists $c>0$ such that for any  subgraph $H $ of $G$ and any function $f:V(H) \mapsto \R$, 
\begin{equation*}
\sum_{v \in V(H) } (f(v) - \bar {f_H})^2 \deg_H (v) \le \frac{c}{I^2_H} \sum_{e \in E(H)} \abs{\nabla f (e)}^2 \label{eq:iso_p}
\end{equation*}
where $\bar {f_H} = |V(H) |_{E(H)}^{-1}\sum_{v \in V(H)} f(v) \deg_H(v) $ and $\deg_H(v)$  is the degree of $v$ and $|V(H) |_{E(H)}$ is the sum over degrees of vertices in $V(H)$ counting edges only in $H$.
\end{lemma}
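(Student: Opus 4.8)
The plan is to follow the classical route from a Cheeger-type isoperimetric inequality to an $L^2$-Poincar\'e inequality: first prove the $L^1$ version by combining the graph coarea formula with a median, and then bootstrap to $L^2$ by applying the $L^1$ inequality to the ``signed square'' of the centred function, controlled via Cauchy--Schwarz. Throughout, write $\mu$ for the measure on $V(H)$ with $\mu(\{v\})=\deg_H(v)$, so that $\mu(V(H))=|V(H)|_{E(H)}$; and recall from the remark preceding the statement (an application of \cite[Lemma 1.3]{Barlow_perc}) that, at the cost of an absolute multiplicative constant, we may drop the connectedness requirement and use the isoperimetric bound $|\partial_E(A,H\setminus A)|\ge c_1 I_H\,\mu(A)$ for \emph{every} $A\subset V(H)$ with $0<\mu(A)\le\tfrac12\mu(V(H))$, where $c_1>0$ is universal. (One should keep in mind that $|A|_E$ appearing in the definition of $I_H$ and the degree-measure $\mu(A)$ are comparable up to a universal factor; this is the one genuinely fiddly point below.)

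\emph{Step 1: the $L^1$-Poincar\'e inequality.} Fix any $\phi:V(H)\to\R$ and let $m$ be a $\mu$-median of $\phi$, so that $\mu(\phi>m)$ and $\mu(\phi<m)$ are both at most $\tfrac12\mu(V(H))$. For the nonnegative function $g:=(\phi-m)_+$, the layer-cake formula gives $\sum_v g(v)\mu(\{v\})=\int_0^\infty\mu(g>t)\,dt$, while the graph coarea identity gives $\sum_{e\in E(H)}|\nabla g(e)|=\int_0^\infty|\partial_E(\{g>t\},H\setminus\{g>t\})|\,dt$. Since $\{g>t\}=\{\phi>m+t\}$ has $\mu$-measure at most $\tfrac12\mu(V(H))$ for all $t>0$, applying the isoperimetric bound under the integral sign yields $\sum_e|\nabla g(e)|\ge c_1 I_H\sum_v g(v)\mu(\{v\})$. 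Because $x\mapsto(x-m)_+$ is $1$-Lipschitz, $|\nabla g(e)|\le|\nabla\phi(e)|$ for every $e$; repeating the argument for $(\phi-m)_-$ and adding gives
\[
\sum_{v\in V(H)}|\phi(v)-m|\,\mu(\{v\})\;\le\;\frac{2}{c_1 I_H}\sum_{e\in E(H)}|\nabla\phi(e)|.
\]

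\emph{Step 2: bootstrap to $L^2$.} Now let $m$ be a $\mu$-median of $f$, put $F:=f-m$ and $G:=\operatorname{sgn}(F)\,F^2$, and note that $0$ is a $\mu$-median of $G$. Applying Step~1 to $G$ gives $S:=\sum_v F(v)^2\mu(\{v\})=\sum_v|G(v)|\,\mu(\{v\})\le\tfrac{2}{c_1 I_H}\sum_e|\nabla G(e)|$. From the elementary inequality $|\operatorname{sgn}(a)a^2-\operatorname{sgn}(b)b^2|\le(|a|+|b|)|a-b|$ we get $|\nabla G(e)|\le(|F(e_-)|+|F(e_+)|)\,|\nabla f(e)|$, so by Cauchy--Schwarz together with $\sum_e(|F(e_-)|+|F(e_+)|)^2\le 2\sum_v F(v)^2\deg_H(v)=2S$,
\[
\sum_{e\in E(H)}|\nabla G(e)|\;\le\;\sqrt{2S}\,\Big(\sum_{e\in E(H)}|\nabla f(e)|^2\Big)^{1/2}.
\]
Combining the last two displays gives $S\le\tfrac{2\sqrt2}{c_1 I_H}\sqrt S\,(\sum_e|\nabla f(e)|^2)^{1/2}$; dividing by $\sqrt S$ (the case $S=0$ being trivial) and squaring yields $S\le\tfrac{c}{I_H^2}\sum_e|\nabla f(e)|^2$ with $c=8/c_1^2$. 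Finally, since the $\mu$-weighted mean $\bar f_H$ minimizes $\alpha\mapsto\sum_v(f(v)-\alpha)^2\mu(\{v\})$, we have $\sum_v(f(v)-\bar f_H)^2\mu(\{v\})\le S$, which is the claim.

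The computational content is entirely routine: coarea, a median, the Lipschitz bounds for $(\cdot)_+$ and for the signed square, and one application of Cauchy--Schwarz. There is no genuinely hard step; the only thing requiring care is the accounting --- reconciling the edge-weighted size $|\cdot|_E$ used to define $I_H$ with the degree-measure $\mu$ used in the statement and in the coarea argument, and passing from connected test sets to arbitrary ones --- and both are absorbed into the universal constant and are exactly what \cite[Lemma 1.3, Proposition 1.4]{Barlow_perc} handle, which is why the lemma is stated as a rephrasing of that result.
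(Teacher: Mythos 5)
Your proof is correct and complete; the only thing to note is that the paper itself does not prove this lemma but simply states it as a rephrasing of \cite[Proposition 1.4(a)]{Barlow_perc}, after invoking \cite[Lemma 1.3]{Barlow_perc} to drop the connectedness restriction in the definition of $I_H$. Your argument is the standard Cheeger-to-Poincar\'e route (coarea formula about a $\mu$-median to get the $L^1$ Poincar\'e inequality, then bootstrap to $L^2$ via the signed square and Cauchy--Schwarz), which is precisely the content of the cited proposition in Barlow, so you have in effect supplied the proof the paper outsources. All the individual steps check: the layer-cake and coarea identities are applied correctly; the median choice guarantees $\mu(\{g>t\})\le\tfrac12\mu(V(H))$ for all $t>0$ so the isoperimetric bound applies under the integral; the $1$-Lipschitz bound for $(\cdot)_\pm$ and the elementary inequality $|\operatorname{sgn}(a)a^2-\operatorname{sgn}(b)b^2|\le(|a|+|b|)|a-b|$ are both valid; the step $\sum_e(|F(e_-)|+|F(e_+)|)^2\le 2\sum_v F(v)^2\deg_H(v)$ is correct; and passing from the median to the $\mu$-weighted mean $\bar f_H$ at the end is justified by the variational characterization of the mean. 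One point you flagged as ``fiddly'' --- reconciling $|A|_E$ with $\mu(A)$ --- is in fact immediate under the paper's conventions (in the subgraph $H$, $|A|_E$ already denotes $\sum_{v\in A}\deg_H(v)=\mu(A)$), so no extra constant is actually lost there; the only genuine constant loss is the factor from dropping connectedness, exactly as you say.
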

This allows us to describe an equivalent geometric criterion which will ensure a crossing estimate. 

\begin{lemma}\label{lem:crossing_criterion}
Suppose there exist constants $\Ce, C_V, C_V' ,C_W, C_I, d ,c_0$ such that for all $ n \ge 1$,
\begin{enumerate}[{(}i{)}]
\item $B(o, \Ce^{-1} n\log n) \subset \Lambda_{n\log n} \subset B(o, \Ce n\log n)$.\label{eq:VolEuc}
\item For all $n^{1/9} \le r \le \Ce n \log n+1$, $$C_V r^2 \le |B(y,r)|_E \le C_V' r^2$$ for all $B(y, r) \subset B(o, \Ce n\log n+1)$.
\item For all set of vertices  $A \subseteq V(B(y, r)) \subseteq V(B(o, \Ce n \log n+1))$ inducing a connected subgraph such that $n^{1/9} \le |A|_E \le  \frac{ |B(y,r)|_E}{2},$
\begin{equation}
i_{B(y,r)}(A) \ge \frac{C_I}{\sqrt{|A|_E}} \label{eq:i_A}
\end{equation}
\item $|B_2|_E \ge dn^2$ where $B_2$ is as in \cref{eq:C-crossable}.
\item Let $m = \frac{3n}{12}$. The graph distance between any vertex in $\Lambda_{2.5m, m/2}$ and any vertex outside $ \Lambda_{3m,m}$ is at least $c_0 n$.
\end{enumerate}
Then there exists a constant $c>0$ (depending only on the constants above) such that for all $n \ge 1$, $\Lambda_{3m,m}$ is $c$-crossable.
\end{lemma}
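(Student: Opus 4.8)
The plan is to verify the three hypotheses of \cref{lem:RSW_condition} and then apply it. Note first that $C_W$ does not actually appear in (i)--(v), so we are free to fix it ourselves; take $C_W:=2$. With $m=3n/12=n/4$, hypothesis (c) of \cref{lem:RSW_condition} is exactly item (v), and hypothesis (b) follows from item (iv). All the content is thus in hypothesis (a): that $\Lambda_{n\log n}$ is $(\Ce,C_P,C_V,C_W)$-very good with $N_{\Lambda_{n\log n}}\le n^{1/8}$ for a suitable $C_P$. Item (i) is precisely the inclusion \eqref{eq:inclusion_good} for $\Lambda_{n\log n}$, so it remains to show $B(o,\Ce n\log n)$ is $(C_P,C_V,C_W)$-very good. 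I would take $N:=\lceil n^{1/9}\rceil$, which for all large $n$ satisfies both $N\le n^{1/8}$ and $N\le(\Ce n\log n)^{1/4}$ (the finitely many smaller scales only affect the eventual constant $c$), and check that every ball $B(y,r)$ with $B(y,r)\subseteq B(o,\Ce n\log n)$ and $N\le r\le\Ce n\log n$ is $(C_P,C_V,C_W)$-good.

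The volume bound \eqref{eq:vol} for $B(y,r)$ is immediate from the lower half of (ii), since $n^{1/9}\le N\le r\le\Ce n\log n+1$. For the weak Poincar\'e inequality \eqref{eq:WP}, the idea is to apply \cref{lem:iso_poincare} to the subgraph $H:=B(y,C_W r)$, giving
\[
\sum_{v\in V(H)}\bigl(f(v)-\bar{f_H}\bigr)^2\deg_H(v)\ \le\ \frac{\kappa}{I_H^2}\sum_{e\in E(H)}\abs{\nabla f(e)}^2
\]
for every $f:V(H)\to\R$, where the right-hand edge sum is exactly the one in \eqref{eq:WP}. Three elementary steps turn the left-hand side into the one in \eqref{eq:WP}: first, for $v\in B(y,r)$ every $G$-neighbour of $v$ lies in $B(y,r+1)\subseteq B(y,C_W r)=V(H)$ (using $C_W=2$ and $r\ge1$), so $\deg_H(v)=\deg(v)$ for $v\in B(y,r)$ --- this is the step where the absence of a uniform degree bound must be handled by hand rather than quoted from \cite{Barlow_perc}; second, since the $\deg$-weighted mean $\bar f$ of $f$ over $B(y,r)$ minimises $a\mapsto\sum_{v\in B(y,r)}(f(v)-a)^2\deg(v)$, we may replace $\bar f$ by $\bar{f_H}$ at no cost; third, extending the sum from $B(y,r)$ to $V(H)$ only increases it. Combining, $\sum_{v\in B(y,r)}(f(v)-\bar f)^2\deg(v)\le\frac{\kappa}{I_H^2}\sum_{e\in E(H)}\abs{\nabla f(e)}^2$, so \eqref{eq:WP} holds with $C_P=\kappa/c'^2$ provided we establish $I_H\ge c'/r$.

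To bound $I_H=I_{B(y,C_W r)}$ from below, let $A\subsetneq V(H)$ be nonempty with $A$ and $H\setminus A$ connected and $|A|_E\le\frac12|H|_E$. If $|A|_E\ge n^{1/9}$, apply (iii) with the ball $B(y,C_W r)$ in the role of $B(y,r)$ there: this gives $i_H(A)\ge C_I/\sqrt{|A|_E}\ge C_I/\sqrt{\tfrac12|H|_E}$, and the upper half of (ii) bounds $|H|_E=|B(y,C_W r)|_E\le C_V'(C_W r)^2$, whence $i_H(A)\ge c_1/r$ with $c_1=C_I\sqrt2/(C_W\sqrt{C_V'})$. If instead $|A|_E<n^{1/9}$, then since $H$ (a graph-distance ball in the connected graph $G$) is connected there is at least one edge between $A$ and $H\setminus A$, so $i_H(A)\ge1/|A|_E>n^{-1/9}\ge1/r$ using $r\ge N\ge n^{1/9}$. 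Taking $c':=\min(c_1,1)$ gives $I_H\ge c'/r$, which completes the verification of \eqref{eq:WP} and hence of hypothesis (a); \cref{lem:RSW_condition} then yields the claim.

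I expect the main obstacle to be not any single estimate but the bookkeeping that the inflated ball $B(y,C_W r)$ on which \eqref{eq:WP} is tested still falls under hypotheses (ii)--(iii), i.e.\ that it lies inside $B(o,\Ce n\log n+1)$. Since $y\in B(o,\Ce n\log n)$ one only has $B(y,C_W r)\subseteq B(o,(1+C_W)\Ce n\log n)$ a priori, so one genuinely needs the ranges in (ii)--(iii) to reach a constant multiple beyond $\Ce n\log n$; the extra $\log n$ built into the working scale $n\log n$ is exactly what supplies this slack, since on the scales $r\asymp\Ce n$ at which the proof of \cref{lem:RSW_condition} actually invokes the heat-kernel estimates of \cite{Barlow_perc} one has $(1+C_W)\Ce n\le\Ce n\log n$ for all large $n$. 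One must also keep the distinction between $|\cdot|_E$ and $|\cdot|$ straight in the various weighted averages --- again a consequence of not assuming bounded degrees --- and dispose of the finitely many small scales by hand.
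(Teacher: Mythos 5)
Your overall architecture is the same as the paper's: verify hypotheses (a)--(c) of \cref{lem:RSW_condition} and deduce \eqref{eq:WP} from \cref{lem:iso_poincare}. Items (iv), (v), and (i), and the lower half of (ii), are handled correctly, as is the minimality/padding trick for replacing $\bar f$ by $\bar{f_H}$ and the observation that $\deg_H(v)=\deg_G(v)$ for $v\in B(y,r)$. The problem is the choice of $H$.

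You apply \cref{lem:iso_poincare} to $H=B(y,C_Wr)=B(y,2r)$ and therefore need to bound $I_{B(y,2r)}$ from below, using hypotheses (ii)--(iii) for the ball $B(y,2r)$. But those hypotheses only reach radius $\Ce n\log n+1$ and only for balls \emph{contained in} $B(o,\Ce n\log n+1)$. For $r$ up to $\Ce n\log n$ (which the very-good condition does demand --- $N_{B(o,\Ce n\log n)}\le r\le \Ce n\log n$ must all be covered), the ball $B(y,2r)$ neither has radius $\le \Ce n\log n+1$ nor sits inside $B(o,\Ce n\log n+1)$: from $B(y,r)\subseteq B(o,\Ce n\log n)$ one only gets $d(o,y)\le\Ce n\log n$, hence $B(y,2r)\subseteq B(o,\Ce n\log n+2r)$, which exceeds $\Ce n\log n+1$ as soon as $r>1/2$. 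Your suggested workaround --- that the heat-kernel estimates behind \cref{lem:RSW_condition} only invoke goodness at scales $r\asymp\Ce n$ --- is not substantiated: as stated, hypothesis (a) of \cref{lem:RSW_condition} asks for the full very-good property at scale $\Ce n\log n$ (matching \cite[Theorem 5.3]{Barlow_perc}, which is quoted there with $R\log R=\Ce n\log n$ and hence at ball size $\Ce n\log n$, not $\Ce n$); weakening it would require a separate argument inside Barlow's proof.

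The paper avoids this entirely by applying \cref{lem:iso_poincare} not to $B(y,2r)$ but to $B(y,r+1)$, which genuinely does satisfy $B(y,r+1)\subseteq B(o,\Ce n\log n+1)$ and $r+1\le\Ce n\log n+1$ (adding one layer to $B(y,r)$ costs one unit of radius, which the $+1$ in (ii)--(iii) was put there to absorb). The resulting Poincar\'e inequality has edge sum over $E(B(y,r+1))$, and one then simply notes that $\sum_{e\in E(B(y,r+1))}\abs{\nabla f(e)}^2\le\sum_{e\in E(B(y,2r))}\abs{\nabla f(e)}^2$ --- this trivial monotonicity step requires no hypothesis on $B(y,2r)$ at all, since it only \emph{enlarges} the right-hand side. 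Replacing your choice $H=B(y,2r)$ by $H=B(y,r+1)$, and adding this last line, closes the gap and otherwise leaves your argument intact.
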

\begin{proof}
This is an application of \cref{lem:RSW_condition,lem:iso_poincare}. Indeed, if $B(o, \Ce n\log n)$ is very good then items (i), (iv) and (v) imply c-crossability by \cref{lem:RSW_condition}. The lower bound of item (ii) establishes \eqref{eq:vol}, so we only need to establish \eqref{eq:WP} for every $B(y,r) \subset B(o,\Ce n \log n)$ for $n^{1/9} \le r \le \Ce n \log n$ (i.e., we choose $N_{B(o, \Ce n \log n)} = n^{1/9}$). 

Fix $y$ with $B(y,r) \subseteq B(o,\Ce n \log n)$, and write $B_r$ for $B(y,r)$ to minimize notation.
Notice that for any $B_r \subseteq B(o,\Ce n \log n)$ with $n^{1/9} \le r \le \Ce n \log n$ and any connected set $A \subseteq B_{r+1}$ with $n^{1/9}<|A|_E \le |B_{r+1}|_E/2$, we have 
$$i_{B_{r+1}}(A )\ge C_I|A|^{-1/2}_E \ge \frac{C_I\sqrt{2}}{\sqrt{C_V'} (r+1)} $$ by the upper bound of item (ii) and the isoperimetric inequality (iii). On the other hand, if $|A|_E \le n^{1/9}$, then trivially $i_{B_{r+1}}(A) \ge |A|^{-1}_E \ge n^{-1/9} \ge r^{-1}$. Thus, by possibly decreasing $C_I$ and increasing $C'_V$ if needed, we obtain that $$I_{B_{r+1}} \ge \tilde C r^{-1}$$ with $\tilde C =C_I\sqrt{2/C'_V}$. 

 Now choose $C_W = 2$, and for any function $f:B_{2r} \mapsto \R$. Restrict $f$ to the graph induced by $B_r$ union all the vertices in $\partial_E (B_r, G \setminus B_r)$. 
\begin{align*}
\sum_{v \in V(B_r) } (f(v) - \bar {f})^2 \deg_{G} (v) & \le \sum_{v \in V(B_r) } (f(v) - \bar {f}_{B_{r+1}})^2 \deg_{G} (v) \\
& \le \sum_{v \in V(B_{r+1}) } (f(v) - \bar {f}_{B_{r+1}})^2 \deg_{B_{r+1}} (v)\\
& \le \frac{cr^2}{\tilde C^2}\sum_{e \in E(B_{r+1})} \abs{\nabla f (e)}^2.
\end{align*}
where $c$ is as in \cref{lem:iso_poincare}. Notice that cannot directly use \cref{lem:iso_poincare} as the degrees are counted in $G$ which could potentially be large as we have no assumption on the degree bound. The first inequality follows from the fact that $\bar {f}$ is the minimum over $a$ of $\sum_{v \in V(B_r) } (f(v) - a)^2 \deg_{G} (v)$. The second inequality is a trivial addition of nonnegative terms along with the fact that $\deg_G(v) = \deg_{B_{r+1}(v)}$ if $v \in B_r$. The final inequality follows from \cref{lem:iso_poincare} applied to $B_{r+1}$. Thus, we have established \eqref{eq:WP} with $C_P = c\tilde C^{-2}$ since adding $|\nabla f(e)|^2$ over the rest of the edges of $B_{2r}$ only increases the right hand side.
\end{proof}

\section{RSW for Bernoulli Percolation}\label{sec:bern}

In this section, we focus on \text{Bernoulli bond percolation} on $\Z^2$.  For $p \in [0,1]$ let $\P_p$ denote the Bernoulli bond percolation probability measure induced by i.i.d.\ coin flips, one for each edge of $\Z^2$. We call an edge \textbf{open} if the edge is present, and \textbf{closed} otherwise. A cluster denotes a connected component of open edges.
It is well known that for $p>p_c:=1/2$ (i.e. the percolation is supercritical) there exists a unique infinite cluster almost surely (see e.g. \cite{grimmett1999percolation}), call it $\cC_\infty$. We refer to \cite{duminil_sixty} for relevant history and references of this very popular model. Our main result in this section is an RSW type result for random walk on $\cC_\infty$. Recall the definition of $c$-crossable from \eqref{eq:C-crossable}.

\begin{thm}\label{thm:RSW_perc}
Fix $p>p_c=1/2$ and let $\cC_\infty$ be the unique infinite cluster for supercritical bond percolation in $\Z^2$ induced by the probability measure $\P_p$. There exist constants $c_p, c,\alpha \in (0,1]$ such that for all $n  \ge 1$,
$$
\P(\Lambda_{3n,n} \text{ is $c_p$-crossable}) \ge 1-e^{-cn^{\alpha}}
$$
\end{thm}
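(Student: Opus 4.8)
The plan is to verify, with stretched-exponentially high probability, the five geometric hypotheses of \cref{lem:crossing_criterion} for the graph $G = \cC_\infty$ with its natural embedding in $\R^2$, where $o$ is the point of $\cC_\infty$ closest to the origin. Once these hold, $c_p$-crossability of $\Lambda_{3n,n}$ follows deterministically from \cref{lem:crossing_criterion}. So the task reduces to a union bound over the items.

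First I would record the basic density/chemical-distance facts for supercritical percolation. By the classical results of Antal--Pisztora, the chemical (graph) distance in $\cC_\infty$ between two points at Euclidean distance $k$ is, except on an event of probability $e^{-c k}$, comparable to $k$; and by elementary large-deviation estimates the number of open edges of $\cC_\infty$ inside any Euclidean ball of radius $r$ is $\Theta(r^2)$ except with probability $e^{-c r^2}$ (or $e^{-cr}$ after a suitable union bound over the polynomially many balls one needs). These two inputs immediately give items (i) (the two-sided inclusion $B(o,\Ce^{-1} n\log n)\subset\Lambda_{n\log n}\subset B(o,\Ce n\log n)$, via Antal--Pisztora applied to all boundary pairs), item (ii) (the volume bounds $C_V r^2 \le |B(y,r)|_E \le C_V' r^2$ for all $n^{1/9}\le r\le \Ce n\log n$, translating graph balls to Euclidean balls using (i)), item (iv) ($|B_2|_E \ge d n^2$, again density in a fixed Euclidean square), and item (v) (the graph distance between $\Lambda_{2.5m,m/2}$ and the complement of $\Lambda_{3m,m}$ is $\ge c_0 n$, which is just a lower bound on chemical distance and hence follows from the deterministic fact that graph distance dominates Euclidean distance — no randomness needed, since $\cC_\infty\subset\Z^2$). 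Each of these fails with probability at most $e^{-c n^{\alpha}}$ after the appropriate union bound over $O(\mathrm{poly}(n))$ balls; the polynomial factor is harmlessly absorbed.

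The substantive step is item (iii): the local isoperimetric inequality $i_{B(y,r)}(A)\ge C_I/\sqrt{|A|_E}$ for every connected $A\subseteq V(B(y,r))$ with $n^{1/9}\le |A|_E\le |B(y,r)|_E/2$, uniformly over all $B(y,r)\subseteq B(o,\Ce n\log n+1)$. Here I would invoke the known isoperimetric/anchored-expansion theory for the infinite percolation cluster — this is exactly the Cheeger-type estimate established (for $\Z^d$, $d\ge 2$) by Mathieu--Remy, Barlow, and Benjamini--Mossel, and quantitatively by Pete: with probability $1 - e^{-c L^{\alpha}}$, every connected subgraph $A$ of $\cC_\infty$ contained in a box of side $L$ with $|A|$ large enough has edge boundary (inside the cluster) at least of order $|A|^{1/2}$ in two dimensions. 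One applies this with $L = \Ce n\log n + 1$ and then passes from the fixed big box to all sub-balls $B(y,r)$ by the trivial monotonicity $\partial_{E,B(y,r)}(A)\ge$ (the boundary counted inside the big box minus the edges of the big box leaving through $\partial B(y,r)$) — more cleanly, since the required inequality only involves edges present in $B(y,r)$, and removing edges can only decrease the boundary, one needs the isoperimetric estimate to hold for the induced subgraph; the standard way around this is that the two-dimensional Cheeger estimate is already stated for sets together with the requirement that $A$ and its complement be connected, and \cref{lem:crossing_criterion}'s proof only uses it through \cref{lem:iso_poincare}, so it suffices to have it for the big box and then restrict $f$. The cleanest route: prove the isoperimetric bound for $A$ as a subset of $\cC_\infty\cap B(o,\Ce n\log n+1)$ directly (this is what Pete's quantitative bound gives), and observe $i_{B(y,r)}(A)\ge i_{B(o,\Ce n\log n+1)}(A)$ is false in general, so instead I would note that $B(y,r)$ is itself a subgraph of $\cC_\infty$ of diameter $\le 2r \le 2\Ce n\log n$, hence the same Cheeger estimate applies verbatim to it with the same $L$-scale, giving item (iii) directly. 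I expect this isoperimetry input — getting the right power $|A|^{1/2}$ with a stretched-exponential failure probability, uniformly over exponentially many subsets — to be the main obstacle, and the one place where I would lean hardest on the literature (Pete, \emph{A note on percolation on $\Z^d$: isoperimetric profile via exponential cluster repulsion}).

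Finally I would assemble the pieces: let $E_n$ be the intersection of the good events for items (i)--(v), so $\P(E_n^c)\le e^{-cn^{\alpha}}$ for a possibly smaller $\alpha\in(0,1]$ and smaller $c$; on $E_n$, \cref{lem:crossing_criterion} produces a deterministic $c_p>0$ (depending only on $p$ through the percolation constants) with $\Lambda_{3m,m}=\Lambda_{3n/4,n/4}$ being $c_p$-crossable, and reindexing $n\mapsto 4n$ (or absorbing the constant $3/12$ as in the lemma's statement) gives the claimed statement for $\Lambda_{3n,n}$. The only care needed is bookkeeping of the exponents $n^{1/8}, n^{1/9}$ appearing in "very good" versus the scales at which the percolation estimates are quantitatively valid — all the percolation inputs above hold down to polylogarithmic scales, comfortably below $n^{1/9}$, so there is slack to spare.
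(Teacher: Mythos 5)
Your proposal takes a genuinely different route from the paper's. You feed everything through \cref{lem:crossing_criterion}, whose substantive input is the local isoperimetric inequality (item (iii)), which you propose to obtain from Pete's quantitative isoperimetry for supercritical percolation clusters. The paper instead uses \cref{lem:RSW_condition} directly and never touches isoperimetry for percolation: Barlow's Theorem 2.18 combined with his Lemma 2.19 already certify, with stretched-exponentially high probability, that a box is ``very good'' in the sense of \cref{def:good} (i.e.\ volume growth \emph{and} weak Poincar\'e), so \cref{lem:HandQ} is essentially a bookkeeping application of those two results plus \cref{lem:inclusion}. The isoperimetry route (\cref{lem:crossing_criterion}) is reserved in the paper for the Voronoi triangulation, where no Barlow-style black box exists. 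Your route is perfectly reasonable in spirit and would give an alternate proof, but it is longer than what is actually needed for percolation.

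There is, however, a genuine gap in the way you handle the local isoperimetric inequality, and you half-notice it. Pete's estimate controls $|\partial_E(A,\cC_\infty\setminus A)|$, not $|\partial_E(A,B(y,r)\setminus A)|$; for $A$ touching the boundary of the graph ball $B(y,r)$, the latter can be much smaller because all edges leaving $B(y,r)$ are discarded. Your two proposed fixes do not close this. Restricting $f$ from the big box to $B(y,r)$ does not help, because \cref{lem:iso_poincare} applied to the big box produces a Poincar\'e constant of order $(n\log n)^2$ rather than the required $r^2$, so the weak Poincar\'e inequality \eqref{eq:WP} is not recovered at scale $r$. And ``applying the Cheeger estimate verbatim to $B(y,r)$'' is not justified: $B(y,r)$ is an arbitrary graph ball inside $\cC_\infty$, not itself a percolation cluster (or a box intersected with one), so the literature results you cite are not stated for it and cannot be invoked as a black box. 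The paper's resolution of exactly this issue (developed for the Voronoi case, but elementary and purely graph-theoretic) is \cref{lem:geometry} together with \cref{lem:iso_2}: if $A\subseteq B(o,n)$ touches $\partial B(o,n)$ and the interior boundary $\partial_{\mathsf{int}}(A)$ has only $k$ vertices, then $A\subseteq B(v,2k)$ for any $v\in\partial_{\mathsf{int}}(A)$, so a small interior boundary forces $A$ into a small graph ball, which the volume estimate then rules out. You would need that ingredient (or an equivalent) to make your route go through; without it, item (iii) is not established.
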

In the rest of the section, we fix $p>p_c$. We also denote by $B(x,r)$ the graph distance balls in $\cC_\infty$.
\cref{thm:RSW_perc} will be a quick application of a combination of results in Barlow \cite{Barlow_perc} and \cref{lem:RSW_condition}. Let us begin with a standard lemma:
\begin{lemma}\label{lem:inclusion}
There exist constants $C_{\Euc} := C_{\Euc}(p), c>0$ such that for all $n \ge 1$,
$$\P_p (B(o,C_{\Euc}^{-1}n) \subseteq  \Lambda_n \subset B(o,C_{\Euc} n)) \ge 1-e^{-cn}.$$
where recall that $o$ is the vertex of $\cC_\infty$ nearest to the origin.
\end{lemma}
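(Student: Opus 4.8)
The plan is to establish the two inclusions separately, both via standard large-deviation estimates on chemical (graph) distances in the supercritical percolation cluster. For the inclusion $B(o, C_{\Euc}^{-1} n) \subseteq \Lambda_n$, I would argue by contraposition: if some vertex $v \in \cC_\infty$ lies outside $\Lambda_n$ but within graph distance $C_{\Euc}^{-1} n$ of $o$, then there is an open path in $\cC_\infty$ from $o$ to $\partial \Lambda_n$ of length at most $C_{\Euc}^{-1} n$. But $o$ is within $O(1)$ Euclidean distance of the origin with overwhelming probability (this itself follows from the fact that the origin is surrounded by an open circuit at a nearby scale, a standard consequence of supercriticality), so such a path would connect two points at Euclidean distance $\geq n - O(1)$ using at most $C_{\Euc}^{-1} n$ edges. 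For $C_{\Euc}$ large this is impossible because any path of $k$ edges in $\Z^2$ has Euclidean diameter at most $k$; choosing $C_{\Euc} \geq 2$ already rules it out deterministically once $o$ is near the origin. So this direction needs essentially nothing beyond the (exponentially likely) statement that $o$ is close to the origin.

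The substantive direction is $\Lambda_n \subseteq B(o, C_{\Euc} n)$: every vertex of $\cC_\infty$ inside $\Lambda_n$ should be reachable from $o$ within $C_{\Euc} n$ open edges. This is exactly the content of the well-known theorem of Antal--Pisztora \cite{AntalPisztora} (building on work of Grimmett--Marstrand) on the comparison of chemical distance and Euclidean distance in supercritical percolation: there exist constants $\rho = \rho(p)$ and $c>0$ such that for all $x,y$, $\P_p\big(x \leftrightarrow y,\ d_{\cC_\infty}(x,y) > \rho |x-y|_1\big) \leq e^{-c|x-y|_1}$, with the understanding that the estimate is vacuous for $|x-y|$ small. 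I would invoke this with $x = o$ and $y$ ranging over the $O(n^2)$ lattice points in $\Lambda_n$: by a union bound, with probability at least $1 - C n^2 e^{-c' n} \geq 1 - e^{-c'' n}$, every such $y$ that lies in $\cC_\infty$ satisfies $d_{\cC_\infty}(o, y) \leq \rho |o - y|_1 \leq \rho \cdot 3n$ (using $|o| = O(1)$). Setting $C_{\Euc} = \max(2, 3\rho)$ and absorbing the near-origin estimate for $o$ gives the claim.

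The main obstacle — really the only one — is to make sure the Antal--Pisztora bound is being quoted in the exact form needed, in particular that it holds uniformly over all pairs in the box with a single exponential rate and that one correctly handles the lattice points $y$ that are \emph{not} in $\cC_\infty$ (for those the event in the lemma simply does not constrain anything, since $B(o,\cdot)$ is a ball in $\cC_\infty$). One should also note the minor point that $o$ itself is random; since the event $\{|o| \leq K\}$ fails with probability at most $e^{-cK}$ (no open circuit near the origin at any scale up to $K$ is super-exponentially unlikely), conditioning on $\{|o| \leq K_0\}$ for a suitable fixed $K_0$ and then applying the union bound over the slightly enlarged box $\Lambda_{n + K_0}$ is harmless and only affects constants. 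Everything else is a routine union bound, so I would keep the write-up short and point to \cite{AntalPisztora} for the heavy lifting.
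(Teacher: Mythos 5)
Your proposal is correct and follows essentially the same route as the paper: the paper also handles the inclusion $B(o,C_{\Euc}^{-1}n)\subseteq\Lambda_n$ by the elementary observation that chemical distance dominates $\ell_1$ distance together with the fact that $|o|\le n/2$ with exponentially high probability (quoting Garet--Marchand), and proves $\Lambda_n\subset B(o,C_{\Euc}n)$ as a standard application of the Antal--Pisztora chemical-distance theorem. The only cosmetic difference is that your justification for $o$ being near the origin appeals to open circuits around the origin, whereas the paper cites the estimate on $\P(\cC_\infty\cap\Lambda_K=\emptyset)$ directly, but these amount to the same standard fact.
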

\begin{proof}
It is easy to see by triangle inequality and the fact that graph distance in $\cC_\infty$ is bigger than that in $\Z^2$, that for any point $x \in B(o,C_{\Euc}^{-1}n$), $|x|\le |o|+ C_{\Euc}^{-1}n$ where $|\cdot|$ is the $\ell_1$-norm. On the other hand, by \cite[eq.\ (4) and references therein]{Garet_chemical}, $|o| \le n/2$ with exponentially high probability in $n$. Thus the first inclusion is satisfied for a large enough choice of $C_{\Euc}$ with exponentially high probability in $n$.  The other inclusion is a similar standard application of \cite[Theorem 1.1]{AP_chemical}.
\end{proof}
\begin{lemma}\label{lem:HandQ}
There exist positive constants $C_P,C_V,C_W,c,c',\alpha,d$ such that for all $n \ge 1$, the following events hold with probability at least $1-ce^{-c'n^{\alpha}}$.
\begin{itemize}
\item $|B_2| \ge dn^2$ where $B_2$ is the square as in \cref{lem:RSW_condition} with $z=(0,0)$.
\item $\Lambda_{n \log n}$ is $C_{\Euc},C_P, C_V, C_W$-very good with $N_B \le n^{1/4}$ with $C_{\Euc}$ as in \cref{lem:inclusion}.
\end{itemize}
\end{lemma}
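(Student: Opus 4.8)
The plan is to verify, with stretched-exponentially high probability, each of the geometric inputs required by \cref{lem:crossing_criterion} (or directly by \cref{lem:RSW_condition}), and then invoke the relevant results of Barlow \cite{Barlow_perc}. First, the volume lower bound $|B_2| \ge dn^2$: here $B_2 = \Lambda_{m/2}(2m,0)$ with $m = n/4$ is a Euclidean square of side order $n$, and $|B_2|$ counts vertices of $\cC_\infty$ inside it. By ergodicity/stationarity of the infinite cluster together with a standard large-deviation estimate for the density of $\cC_\infty$ in a box (e.g.\ via the fact that each edge of $\Z^2$ in the box lies in $\cC_\infty$ with probability $\theta(p)>0$ and these events have good mixing, or directly from the renormalization in \cite[Section 2]{Barlow_perc}), the number of cluster vertices in a box of side $cn$ is at least $dn^2$ with probability $1 - e^{-c'n}$ for suitable $d = d(p)$. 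This is the easy bullet.

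For the second bullet, the claim is that $\Lambda_{n\log n}$ is $(C_{\Euc},C_P,C_V,C_W)$-very good with $N_B \le n^{1/4}$. By \cref{lem:inclusion}, the Euclidean/graph-distance inclusion \eqref{eq:inclusion_good} holds (with $n$ replaced by $n\log n$, still giving an error $e^{-c n\log n} \le e^{-cn}$) with $C_{\Euc}$ as chosen there. It then remains to show $B(o, C_{\Euc} n\log n)$ is $(C_P,C_V,C_W)$-very good, i.e.\ that there is $N \le (n\log n)^{1/4}$ — and we may even take $N \le n^{1/4}$ — such that every sub-ball $B(y,r) \subseteq B(o, C_{\Euc}n\log n)$ with $N \le r \le C_{\Euc}n\log n$ satisfies \eqref{eq:vol} and the weak Poincaré inequality \eqref{eq:WP}. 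This is exactly the content of Barlow's theory: \cite[Theorem 2.18]{Barlow_perc} (or the combination of his Proposition/Lemmas on ``good'' and ``very good'' balls for supercritical percolation) shows that with probability at least $1 - c\exp(-c' n^{\alpha})$ — the stretched exponent $\alpha$ coming from the fact that ``badness'' of a ball of radius $r$ decays like $\exp(-c r^{\alpha})$ and one unions over $O((n\log n)^2)$ possible centres and radii — every ball $B(y,r)$ inside $B(o, C_{\Euc} n \log n)$ with $r \ge n^{1/4}$ is $(C_P, C_V, C_W)$-good, with the constants depending only on $p$. One has to be slightly careful compared to \cite{Barlow_perc}: there is no uniform degree bound in the ambient framework, but for $\cC_\infty \subset \Z^2$ the degree is bounded by $4$, so in fact we are strictly inside Barlow's setting and his estimates apply verbatim; the $|\cdot|_E$ versus $|\cdot|$ distinction costs only a factor of $4$.

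Concretely I would assemble the statement as follows. Set $R = C_{\Euc} n \log n$. Let $\mathcal{E}_1$ be the inclusion event of \cref{lem:inclusion} (at scale $n\log n$), $\mathcal{E}_2$ the event that $|B_2| \ge dn^2$, and $\mathcal{E}_3$ the event from Barlow that every $B(y,r) \subseteq B(o,R)$ with $n^{1/4} \le r \le R$ is $(C_P,C_V,C_W)$-good. On $\mathcal{E}_1$ the Euclidean square $\Lambda_{n\log n}$ contains $B(o, C_{\Euc}^{-1} n\log n)$ and is contained in $B(o,R)$; on $\mathcal{E}_3$ the ball $B(o,R)$ is very good with $N_{B(o,R)} = n^{1/4} \le (n\log n)^{1/4}$, so by definition $\Lambda_{n\log n}$ is $(C_{\Euc},C_P,C_V,C_W)$-very good with $N_B \le n^{1/4}$. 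Each $\mathcal{E}_i$ fails with probability at most $c e^{-c' n^\alpha}$ (for $\mathcal{E}_1,\mathcal{E}_2$ one even has $\alpha = 1$), so a union bound gives the claim. The main obstacle — or rather, the only point needing genuine care — is extracting the right quantitative ``very good with $N_B \le n^{1/4}$'' statement from \cite{Barlow_perc}: one must check that Barlow's probabilistic estimates on the scale at which balls become good are strong enough to control the $N_B$ threshold at $n^{1/4}$ while leaving a stretched-exponential error after the union bound over $O(R^2)$ balls, and that the (mild) differences in hypotheses — absence of a global degree bound, counting edge-weights rather than vertices — do not degrade the constants. Everything else is bookkeeping.
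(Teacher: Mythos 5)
Your proposal is correct and takes essentially the same route as the paper: the Euclidean/graph-distance inclusion from \cref{lem:inclusion}, followed by an appeal to Barlow's results (the paper specifically invokes \cite[Theorem 2.18 and Lemma 2.19]{Barlow_perc}, the latter supplying the stretched-exponential probability bound) to get the very-good property, with a union bound at the end. The one place you deviate is the volume lower bound $|B_2|\ge dn^2$: the paper extracts it from the same machinery, namely item (a) of Barlow's Theorem 2.18 applied to a graph-distance ball $B(b,C_{\Euc}^{-1}n)\subset B_2$ centred at the vertex nearest $(n/2,0)$, while you propose a direct large-deviation bound on the density of $\cC_\infty$ in a box (or, as your parenthetical alternative, Barlow's renormalization, which is closer to the paper). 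Both routes work; your density argument even gives an exponential rather than a stretched-exponential rate for this one item, though after the union bound with $\mathcal{E}_3$ this confers no advantage. Your remark that the absence-of-degree-bound worry is vacuous here because $\cC_\infty\subset\Z^2$ has degree at most $4$ is correct and is implicitly what lets the paper use Barlow's results off the shelf in this section.
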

\begin{proof}

We apply \cref{lem:inclusion} to first obtain a constant $C_{\Euc}$ as required. By translation invariance, we can also assume $C_{\Euc} $ is large enough so that $B(b,\Ce^{-1} n) \subset B_2$ where $b$ is the closest point to $(n/2,0)$ (i.e. the center of $B_2$ with $z=0$). Thus we assume $B(o,\Ce^{-1}n \log n ) \subset \Lambda_{n \log n}$ and $B(b, \Ce^{-1}n) \subset B_2$ for the rest of the proof admitting  a cost exponentially small in $n$.

 We now show that $B(o, C_{\Euc}n\log n)$ is $C_P,C_V,C_W$ very good with stretched exponentially high probability and $B(b,\Ce^{-1} n)$.
This is essentially a combination of \cite[Theorem 2.18 and Lemma 2.19]{Barlow_perc}, let us provide a brief explanation of the results there. In \cite[Theorem 2.18]{Barlow_perc}, it is proved for a box $Q$ of any size, if certain events $H(Q, \alpha) $ and $D(Q, \alpha)$ hold, then the items in this lemma are satisfied. (The events $H(Q, \alpha) $ and $D(Q, \alpha)$ are certain geometric conditions whose exact definitions will not be important for us.) Later in  \cite[Lemma 2.19]{Barlow_perc}, it is shown that $H(Q, \alpha) $ and $D(Q, \alpha)$ hold on the box $Q$ with stretched exponentially high probability in the size of $Q$. 

To be more precise, we apply \cite[Theorem 2.18]{Barlow_perc} for $Q = \Lambda_{C_{\Euc} Cn\log n}$ for $C = 3C_{\Euc}/2$ and $\alpha = 1/8$. With this choice, the first item in this lemma holds with stretched exponentially high probability in $n$ as per item (a) of  \cite[Theorem 2.18]{Barlow_perc} (with $r = \Ce^{-1}n$, $y=b$) since $|B_2 | \ge |B_{\Ce^{-1}}n|$. Also, the second item holds with stretched exponentially high probability in $n$ as per item (c) of \cite[Theorem 2.18]{Barlow_perc} (with $R  = C_{\Euc}n\log n$ and $y=o$). This finishes the proof.
\end{proof}
\begin{proof}[Proof of \cref{thm:RSW_perc}]
We apply \cref{lem:RSW_condition}. \cref{lem:inclusion} justifies the existence of $C_{\Euc}\ge 1$ with $B(o,C_{Euc}^{-1}n) \subseteq  B(o,n) \subseteq \Lambda_n \subset B(o,C_{\Euc} n)$ losing a probability exponentially small in $n$. Furthermore, \cref{lem:HandQ} justifies the requirement the very good condition and the lower bound on the volume of $B_2$ only losing a probability which is stretched exponentially small in $n$. The lower bound on the distance between the inner rectangle and the outer one is trivial since distances only increase in $\cC_\infty$ (choosing $c_0 = 0.1$ suffices). An application of union bound on the above estimates finish the proof.
\end{proof}

\section{RSW for Delaunay triangulation}\label{sec:voronoi}
Let $\Pi$ be a Poisson point process in $\R^2$ with intensity 1. Recall that a Voronoi cell of $x \in \Pi$ is the set of points in $\R^2$ whose closest point (in Euclidean distance) in $\Pi$ is $x$.  Let $\mathbb T$ denote the Voronoi triangulation which is formed by joining two points in $\Pi$ by a straight line if their cells share a common edge (it is a standard fact that this graph is a.s. a triangulation). We will denote by $d_{\mathbb T}$ the graph distance in $\mathbb T$ and the graph distance ball of radius $r$ around $x$ in $\mathbb T$ is denoted by $B_{\mathbb T}(x,r)$ (for a point $x \in \R^2$, $B(x,r)$ denotes the ball of radius $r$ around a vertex in $\mathbb T$ closest to $x$). We will usually drop the subscript for notational convenience when the graph in question is unambiguous.

 In this section we will prove the following theorem.
\begin{thm}\label{thm:RSW_Poisson}
Let $\mathbb T$ be the Voronoi triangulation formed by  a Poisson process of intensity 1. There exist constants $\crsw, c,\alpha \in (0,1]$ such that for all $n  \ge 1$,
$$
\P(\Lambda_{3n,n} \text{ is $\crsw$-crossable}) \ge 1-e^{-cn^{\alpha}}
$$
\end{thm}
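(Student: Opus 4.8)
The plan is to verify the hypotheses of \cref{lem:crossing_criterion} for the Voronoi triangulation $\mathbb T$, each holding outside an event of stretched-exponentially small probability, and then take a union bound. Concretely, I would fix $m = n/4$ (matching the normalization $m = 3n/12$ in the lemma) and show that, with probability at least $1 - e^{-cn^{\alpha}}$:
\begin{enumerate}[(i)]
\item the inclusion $B(o,\Ce^{-1}n\log n) \subset \Lambda_{n\log n} \subset B(o,\Ce n\log n)$ holds for a suitable constant $\Ce = \Ce(\text{intensity})$;
\item two-sided volume control $C_V r^2 \le |B(y,r)|_E \le C_V' r^2$ holds for all graph-distance balls of radius $n^{1/9} \le r \le \Ce n\log n+1$ inside $B(o,\Ce n\log n+1)$;
\item the quantitative isoperimetric inequality $i_{B(y,r)}(A) \ge C_I/\sqrt{|A|_E}$ holds for all connected $A$ with $n^{1/9} \le |A|_E \le |B(y,r)|_E/2$ — this is exactly the content of the promised \cref{lem:iso_2};
\item $|B_2|_E \ge dn^2$;
\item the graph distance between $\Lambda_{2.5m,m/2}$ and the complement of $\Lambda_{3m,m}$ is at least $c_0 n$.
\end{enumerate}

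For (i), (ii), (iv), (v) the key geometric input is the standard fact that, with stretched-exponentially high probability in the side length, every Voronoi cell meeting a large box is ``nice'': there are no large empty regions, no cell is too big, and the local point-count and edge-count are comparable to the area. This follows from elementary Poisson concentration: tile $\R^2$ by unit squares, and note that (a) every square contains $\Theta(1)$ points except on an event exponentially small in the number of squares, and (b) cell diameters are controlled by the largest empty disk nearby. Items (i) and (v) then reduce to comparing the graph metric $d_{\mathbb T}$ with the Euclidean metric up to multiplicative constants (an ``AP/Garet--Marchand''-type chemical-distance estimate for Voronoi, which is by now standard), and (ii), (iv) reduce to counting points and incident edges in Euclidean balls, using that in a triangulation $|B(y,r)|_E = \Theta(|B(y,r)|)$ once one knows typical degrees are not too large in aggregate (note we only need an \emph{aggregate} edge bound, not a uniform degree bound, which is what makes this work even though $\mathbb T$ has unbounded maximum degree).

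The heart of the matter — and the step I expect to be the main obstacle — is (iii), the quantitative isoperimetric inequality \cref{lem:iso_2}. Unlike in $\Z^2$ or in the percolation cluster (where one imports Barlow's results more or less directly), for Voronoi triangulations one must prove from scratch that a connected vertex set $A$ of edge-measure $k$ has edge boundary $\gtrsim \sqrt{k}$. The natural approach is to transfer the continuum isoperimetric inequality: associate to $A$ the union $U_A$ of its Voronoi cells, a connected region of area $\asymp k$ (by the volume regularity from the previous paragraph), whose topological boundary $\partial U_A$ has length $\gtrsim \sqrt{k}$ by the classical planar isoperimetric inequality; then one must show that $\partial U_A$ is ``witnessed'' by at least $\gtrsim \sqrt{k}$ edges of the triangulation crossing from $A$ to its complement — i.e., a length-$L$ piece of cell boundary forces $\gtrsim L$ dual/primal edges, which again uses that cells are neither too large nor too thin. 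Care is needed near the boundary of the host ball $B(y,r)$ (where $A$ might ``escape''), and for small $A$ where $|A|_E$ may not be comparable to area; the restriction $|A|_E \ge n^{1/9}$ is presumably there precisely so that the good-event estimates, which are only valid above a polynomial threshold, apply uniformly. Once \cref{lem:iso_2} is in hand, \cref{lem:crossing_criterion} delivers a fixed $\crsw$-crossability constant on the good event, and a union bound over the $O(1)$ events above (each stretched-exponentially likely) completes the proof.
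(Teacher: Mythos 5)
Your overall plan --- verify the hypotheses of \cref{lem:crossing_criterion} one by one, each on a stretched-exponentially likely event, then union-bound --- is exactly the paper's, and your diagnosis that the quantitative isoperimetric inequality \cref{lem:iso_2} is the crux is also correct; the paper flags that lemma as a novel contribution in the introduction. For items (i), (ii), (iv), (v) you are in the same spirit as the paper, though the paper does \emph{not} treat the quantitative chemical-distance and two-sided volume bounds for Voronoi as citable (it explicitly states it could not locate a suitable reference); it derives them from scratch via a renormalisation to Bernoulli site percolation (\cref{lem:xi,lem:cell,lem:Bernoulli_perc,lem:vol_upper,cor:ball_euc}), which is rather more work than your ``Poisson concentration plus standard chemical-distance estimates'' suggests.

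Where you genuinely diverge is the proof of the isoperimetric inequality itself. You propose transferring the continuum isoperimetric inequality through the cell-union $U_A$: area $\asymp k$, hence perimeter $\gtrsim\sqrt{k}$, hence $\gtrsim\sqrt{k}$ Delaunay edges crossing $\partial U_A$ once Voronoi edges have bounded length. The paper argues in the contrapositive and entirely combinatorially: assuming $|\partial_V A|\le\ve_0\sqrt{k}$, it applies a lattice-animal bound (\cref{lem:small_diameter}, built on the same percolation renormalisation used for the other estimates) to conclude that the connected set $\partial_V A$ has Euclidean diameter $\lesssim\ve_0\sqrt{k}$, so $A$ is trapped in a Euclidean box of side $O(\ve_0\sqrt{k})$, which by Poisson concentration cannot hold $k$ vertices. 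This reuses existing machinery and never touches continuum isoperimetry or Voronoi cell geometry. Your route is plausible, but it leaves two nontrivial points open that the paper resolves with dedicated lemmas: first, you need a lower bound on the boundary counted \emph{inside} $B(y,r)$, i.e.\ on $|\partial_{\mathsf{int}}(A)|$, whereas the perimeter of $U_A$ does not distinguish edges leaving $B(y,r)$ from those that stay (the paper handles this with the deterministic \cref{lem:geometry}); and second, you must convert a bound on boundary vertices into the isoperimetric ratio $i_{B(y,r)}(A)$, which is normalised by $|A|_E$, not $|A|$ (the paper does this with an Euler-formula argument in \cref{lem:iso_3}). Your sketch gestures at both issues (``care is needed near the boundary''; ``aggregate edge bound'') but these are precisely the places where the argument has to be made, and they are not automatic.
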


%

We refer to \cite{rousselle15} for some results in this direction, but we failed find a reference to the quantitative nature of the estimates we need, hence we prove it in details.

Fix $s>0$ (think of $s$ as large but constant). Take the lattice $s \Z^2$. For $x\in s \Z^2$,  divide the box $\Lambda_s(x)$ into 400 equal sized boxes of size $s/10$, and call them the \emph{smaller boxes}. Call a box $\Lambda_s(x)$ \textbf{$A$-red} if each of the smaller boxes contain at least one and at most $As^2$ many points in $\Pi$. Clearly, for every $\ve>0$, one can choose a large $s$ and $A$ so that $x$ is red with probability at least $1-\ve$ (since the number of points in the box $\sim$ Poisson $(4s^2)$). We call a box simply \text{red} if we let $A =\infty$ (i.e., we do not specify any upper bound of the number of vertices). We can think of the collection of red boxes of the form $\{\Lambda_s(x)\}_{x \in s\Z^2}$ as a site percolation configuration in $\{0,1\}^{s\Z^2}$ with a vertex being 1 if and only if it is red. Call this percolation configuration $\xi  = \{\xi_x\}_{x \in s\Z^2} \in \{0,1\}^{s\Z^2}$. Observe that $\xi$ is not necessarily an i.i.d.\ Bernoulli as the boxes corresponding to two adjacent vertices in $s\Z^2$ overlap. However, $\xi$ is a $2$-dependant percolation: as soon as graph distance between $x$ and $y$ is strictly greater than 2, $\Lambda_s(x) \cap \Lambda_s(y) = \emptyset$ and consequently $\xi_x$ is independent of $\xi_y$.  

Recall the notion of stochastic domination: $\xi$ stochastically dominates $\xi'$ if one can couple them in the same probability space with $\xi_x \ge \xi'_x$ for all $x\in s\Z^2$.
\begin{lemma}\label{lem:xi}
Fix $\ve>0$ and let $s,\xi$ be chosen as above. Then $\xi$ stochastically dominates a Bernoulli site percolation in $s\Z^2$ with parameter $1-\ve'(\ve)$ with $\ve' \to 0$ as $\ve \to 0$.
\end{lemma}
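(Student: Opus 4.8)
The statement is the standard comparison between a finitely dependent percolation with high marginals and an i.i.d.\ Bernoulli percolation, which is exactly the setting of the Liggett--Schonmann--Stacey theorem (\cite{LSS97}). The plan is to invoke that theorem directly. Recall its content: if $\xi = \{\xi_x\}$ is a $k$-dependent field on a graph of bounded degree $\Delta$ with $\P(\xi_x = 1) \ge 1-\ve$ for every $x$, then $\xi$ stochastically dominates a Bernoulli$(q)$ product field, where $q = q(\ve, k, \Delta) \to 1$ as $\ve \to 0$ (with $k$ and $\Delta$ fixed). In our situation $\xi$ is $2$-dependent on $s\Z^2$ — as observed in the paragraph preceding the lemma, $\xi_x$ and $\xi_y$ are independent once the $\ell^\infty$ graph distance between $x$ and $y$ exceeds $2$ — and $s\Z^2$ has bounded degree (each vertex has $8$ neighbours within $\ell^\infty$-distance $1$, or one may work with the finite-range dependency graph, which still has bounded degree). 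The marginal lower bound $\P(\xi_x = 1) \ge 1 - \ve$ is exactly the construction: $\Lambda_s(x)$ is red with probability at least $1-\ve$ by the choice of $s$. So the hypotheses of \cite{LSS97} are met, and the conclusion gives the dominating parameter $1-\ve'(\ve)$ with $\ve' \to 0$ as $\ve \to 0$.

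The only genuine point to check is that the dependency structure fits the hypotheses of the domination theorem in the form one wants to cite. There are two standard routes. The cleaner one: the relation "$\xi_x$ depends on the restriction of $\Pi$ to $\Lambda_s(x)$, and $\Lambda_s(x) \cap \Lambda_s(y) = \emptyset$ when $x,y$ are at $\ell^\infty$-distance $\ge 3$ in $s\Z^2$" means $\xi$ is a function of a product measure (the Poisson process, which has independent increments over disjoint regions) with each coordinate determined by a bounded-diameter window; this is precisely a $2$-dependent field, and the Liggett--Schonmann--Stacey result applies verbatim. The alternative route, if one prefers a black box with only a pairwise-independence hypothesis: pass to the sublattice $3 s\Z^2$ (or, more efficiently, color $s\Z^2$ with $9$ colors so that within each color class the boxes $\Lambda_s(x)$ are pairwise disjoint), on which the $\xi$-values are genuinely i.i.d.; but this loses a factor of $9$ in density and is wasteful, so I would not take it unless the reader objects to the $k$-dependent formulation.

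I would therefore write the proof as: "This is immediate from the fact that $\xi$ is a $2$-dependent percolation with $\P(\xi_x = 1) \ge 1-\ve$ for all $x$, together with the stochastic domination theorem of Liggett, Schonmann and Stacey \cite{LSS97}; the dependence parameter ($2$) and the degree of $s\Z^2$ ($8$) are fixed, so the dominated Bernoulli parameter $1-\ve'$ satisfies $\ve' \to 0$ as $\ve \to 0$." I do not expect any real obstacle here — the lemma is a packaging statement — beyond making sure the bibliography contains the Liggett--Schonmann--Stacey reference and that the $2$-dependence claim (already justified in the running text) is cited at the right place. One small cosmetic care: the lemma as stated allows $\ve'$ to depend on $\ve$ only, which is correct because $s$ and $A$ are themselves chosen as functions of $\ve$, but $s$ enters only through the fixed geometry of $s\Z^2$ (degree $8$) and the fixed dependence range ($2$), so the LSS bound is uniform in $s$; this is worth a half-sentence so the reader is not worried that the domination degrades as $s$ grows.
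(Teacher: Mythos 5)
Your proposal is correct and matches the paper's proof exactly: both invoke the Liggett--Schonmann--Stacey domination theorem for $2$-dependent site percolation on $s\Z^2$, using the observation (already made in the text preceding the lemma) that $\xi_x$ and $\xi_y$ are independent once the boxes $\Lambda_s(x)$ and $\Lambda_s(y)$ are disjoint. The additional remarks you make about uniformity in $s$ and the alternative sublattice route are sensible but not needed; the paper cites \cite[Theorem 0.0]{liggett1997domination} and stops there.
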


\begin{proof}
As observed above, $\xi$ is a $2$-dependent site percolation.  So by a result of Liggett, Schonmann and Stacey \cite[Theorem 0.0]{liggett1997domination},  $\xi$ dominates a Bernoulli $(1-\ve')$ site percolation $\xi'$ with $\ve' \to 0$ as $\ve \to 0$.
\end{proof}
A path in a graph is a sequence of vertices $v_0, v_1, \ldots, v_k$ such that $v_i$ is adjacent to $v_{i+1}$ for all $0 \le i \le k$, and the edges $(v_i,v_{i+1})$ for all $0 \le i \le k$.
\begin{lemma}\label{lem:cell}
Let $x,y$ be adjacent vertices in $s \Z^2$ and assume $\Lambda_s(x)$ and $\Lambda_s(y)$ are both $A$-red. Take any two smaller boxes of side length $s/10$ in the rectangle $\cR:= \Lambda_s(x) \cup \Lambda_s(y)$ which are at least Euclidean distance $s/4$ from the boundary of the rectangle. Then for any two points of $\Pi$ in these boxes (which one can always find by definition of red), there exists a path in $\mathbb T $ with at most $L:=800As^2$ many vertices joining them, which lies completely inside $\cR$.
\end{lemma}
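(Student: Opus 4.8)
The plan is to exploit the geometric fact that a Voronoi cell of a point $x \in \Pi$ is bounded (in Euclidean diameter) by twice the distance to the nearest other point of $\Pi$, so on a red configuration the cells inside $\cR$ have diameter $O(s)$; this will let me connect any two of the prescribed points by a chain of adjacent cells all of which stay in $\cR$, and then bound the number of cells crossed by the total vertex count. First I would record the elementary deterministic fact: if every one of the smaller boxes tiling $\cR$ contains at least one point of $\Pi$, then for any point $z \in \cR$ the closest point of $\Pi$ to $z$ is within distance $\sqrt{2}\,s/10$, and hence the Voronoi cell of any $w \in \Pi$ that meets the central region of $\cR$ has Euclidean diameter at most, say, $s/2$ (twice the in-radius-type bound, being generous with constants). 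In particular such a cell cannot reach the boundary of $\cR$ from a smaller box that is at Euclidean distance $\ge s/4$ from $\partial\cR$ — this is exactly why the statement insists the two chosen boxes sit $s/4$ away from the boundary.

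Next I would build the path. Take the two chosen points $p, q \in \Pi$, sitting in smaller boxes at distance $\ge s/4$ from $\partial \cR$; join them by a straight Euclidean segment $\ell \subset \cR$ (convexity of $\cR$ — note $\cR$ is a $2s \times s$ or $s \times 2s$ rectangle, hence convex — guarantees $\ell \subset \cR$), and moreover every point of $\ell$ is within distance $s/4$ of one of the two central boxes, so by the diameter bound every Voronoi cell that $\ell$ passes through is contained in a fixed enlargement of $\cR$; shrinking constants slightly one checks all these cells lie inside $\cR$ itself (here one uses that $\ell$ stays at distance $\gtrsim s/4 - s/2$... this needs the constant in the cell-diameter bound to be genuinely $< s/4$, which I would arrange by taking $A$, $s$ in the definition of red only after fixing these Euclidean constants — alternatively just enlarge $\cR$ marginally in the statement; the cleanest route is to prove the cell diameter is at most $s/4$ by a sharper packing estimate). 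The cells met by $\ell$, listed in the order $\ell$ enters them, form a sequence $C_0 = \mathrm{cell}(p), C_1, \dots, C_N = \mathrm{cell}(q)$ of cells with consecutive ones sharing an edge, hence the corresponding points $p = w_0, w_1, \dots, w_N = q$ form a path in $\mathbb T$ lying in $\cR$.

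Finally I bound $N$. Since $\cR$ is $A$-red, $\cR$ contains at most $2 \cdot 400 \cdot A s^2 = 800 A s^2$ points of $\Pi$, hence at most $800 A s^2$ Voronoi cells have their nucleus in $\cR$; every cell $C_i$ in our chain is contained in $\cR$ so its nucleus $w_i \in \cR$, and the $w_i$ are distinct (a straight line enters each cell in a single subinterval), so $N+1 \le 800 A s^2 = L$. This gives a path with at most $L$ vertices, as claimed.

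The main obstacle, and the only place requiring care rather than bookkeeping, is the Euclidean geometry in the second step: pinning down a clean bound on the diameter of a Voronoi cell with nucleus near the center of $\cR$ that is strictly smaller than the $s/4$ buffer, so that one can honestly conclude the chain of cells stays inside $\cR$ (not merely inside a slightly larger rectangle). I expect this to come down to a short covering/packing argument — every point of such a cell is equidistant from its nucleus and from some other point of $\Pi$, and every smaller box of side $s/10$ contains a point of $\Pi$, forcing the nucleus-to-cell-boundary distance to be at most a small multiple of $s/10$ — but the constants have to be tracked honestly, and if they do not quite close one simply fixes $s$ and $A$ (equivalently $\ve$) small enough, or inflates $\cR$ by a harmless additive $O(s)$, neither of which affects any later use of the lemma.
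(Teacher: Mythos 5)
Your overall strategy — join the two points by a straight segment $\ell$, follow the chain of Voronoi cells it crosses, count cells via $A$-redness, and use convexity of $\cR$ to keep the polygonal path inside $\cR$ — is exactly the paper's argument. The one place you get tangled is the "diameter of a cell $<s/4$" worry, and it turns out to be a false alarm: you do not need a bound on the full diameter of the cells, nor do you need the cells themselves to lie inside $\cR$ (in fact a cell whose nucleus sits well inside $\cR$ can still spill out across $\partial\cR$, so the claim "every cell $C_i$ in our chain is contained in $\cR$" is not justified and should be dropped). What you actually need, and what closes cleanly, is only that each \emph{nucleus} is in $\cR$: for $z'\in\ell\cap C_i$ with nucleus $w_i$, the smaller box containing $z'$ has a $\Pi$-point $v$ with $|z'-v|\le \sqrt2 s/10$, hence $|z'-w_i|\le|z'-v|\le\sqrt2 s/10<s/4$, and since $z'$ is at distance $\ge s/4$ from $\partial\cR$ this forces $w_i\in\cR$. (Equivalently, the paper's contrapositive: if $w_i\notin\cR$ then $B(z',s/4)$ would be $\Pi$-free yet contain an entire $s/10$-box, contradicting redness.) With the nuclei in $\cR$ and $\cR$ convex, the edges $w_iw_{i+1}$ lie in $\cR$, and your count $N+1\le 800As^2$ via distinctness of the $w_i$ (convex cells meet a line in a single interval) goes through verbatim. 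So the proof is correct once you replace the cell-containment step by the nucleus-containment step, and there is no need to re-tune $s$ or $A$ or to inflate $\cR$.
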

\begin{proof}
Join any two points in the first box and the second box by a straight line $\cL$. We claim that the Voronoi cells $\cL$ intersects can only belong to points of $\Pi \cap \cR$. Indeed, if a cell corresponding to a point outside $\cR$ intersects $\cL$, then there is a point on $\cL$ which is closer to a point outside $\cR$ than any point inside $\cR$. Thus the disk of radius at least $s/4$ from this point is empty. This means that one of the smaller boxes in $\cR$ is empty, which is impossible since both the boxes $\Lambda_s(x)$ and $\Lambda_s(y)$ are red. This completes the proof of the claim. It is easy to construct a path in the Voronoi triangulation using only the vertices of the cells $\cL$ intersects. Also since $\cR$ is convex, all the edges of this path also lie inside $\cR$. This path can have at most $800As^2$ many vertices as $\Lambda_s(x)$ and $\Lambda_s(y)$ are $A$-red and hence contains at most $800As^2$ points in total.
\end{proof}
We now state a quick result for Bernoulli site percolation in a graph. An open vertex cluster is a connected component in the graph induced by the open vertices.
Recall that in a percolation configuration, the \emph{chemical distance} between two vertices in the same open vertex cluster is the length shortest path in the cluster connecting those two points. It is well-known (see e.g. Grimmett \cite{grimmett1999percolation}) that there exists a unique open infinite cluster in $\Z^2$ for $p>p_c \approx 0.59$.  
\begin{lemma}\label{lem:Bernoulli_perc}
There exist constants $c,c',C>0$ such that for all $n, k\ge 1$ the following holds.
Take a Bernoulli $p$-site percolation in $\Z^2$ with $p>0.9$ and let $\cC_\infty$ be the unique infinite open vertex cluster. Let $\cC_n=\cC_\infty \cap \Lambda_n$. Let $D_{n,k}$ be the following event
\begin{itemize}
\item The chemical distance diameter of $\cC_n $ is at least $n/2$ and at most $Cn$. 
\item For any vertex in $x \in \Lambda_n \cap \Z^2$, $\Lambda_k(x) \cap \cC_\infty \neq \emptyset$.
\item $\max\{|\cH(x)|: x \in \Lambda_n \cap \Z^2\} \le k^2$ where $\cH(x)$ denote the maximal $*$-connected cluster in containing $x$ containing no vertex from $\cC_n$ (call $\cH(x)$ the \textbf{hole} containing $x$)\footnote{$x,y$ are $*$-connected by closed vertices, if there is a path of closed vertices with two consecutive vertices at distance either 1 or 2 in $\Z^2$ (i.e. diagonally adjacent or regular adjacent) connecting $x$ and $y$. It is well-known that a regular cluster is blocked by a $*$-connected circuit.}. 
\end{itemize}
 Then the probability of $D_{n,k}$ is at least $1-cn^4e^{-c'\sqrt{n}} -cn^2e^{-c'k} $.
\end{lemma}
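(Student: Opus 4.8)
## Proof Proposal

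The plan is to prove the three bullets defining $D_{n,k}$ separately, each via a standard supercritical percolation estimate for Bernoulli site percolation in $\Z^2$ with $p>0.9$ (well inside the supercritical phase, so all exponential estimates are available), and then union bound. Throughout I would use the fact that for $p$ close to $1$ the dual closed percolation is highly subcritical, so closed $*$-connected clusters have exponentially decaying size and diameter; this is the workhorse.

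\textbf{Second bullet (local occupation).} For a fixed $x \in \Lambda_n \cap \Z^2$, the event $\Lambda_k(x) \cap \cC_\infty = \emptyset$ forces the box $\Lambda_k(x)$ to either contain no open vertex or to have all its open vertices in finite clusters; either way one can surround $\Lambda_k(x)$ (or more precisely each open vertex in it) by a closed $*$-circuit separating it from $\infty$, or the box itself is devoid of a vertex connected to $\infty$. The cleanest route: if $\Lambda_k(x)\cap\cC_\infty=\emptyset$, then by planar duality there is a closed $*$-connected path from a neighborhood of $x$ to $\partial\Lambda_k(x)$, hence a closed $*$-cluster of diameter $\ge k/2$ somewhere near $x$; this has probability $\le Ce^{-c'k}$ by subcriticality of the dual. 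Summing over the $\le Cn^2$ choices of $x$ gives the $cn^2 e^{-c'k}$ term.

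\textbf{Third bullet (holes are small).} Fix $x\in\Lambda_n\cap\Z^2$. If $|\cH(x)|>k^2$ then the $*$-connected cluster of closed vertices through $x$ that avoids $\cC_n$ is large; I would dominate $\cH(x)$ by the closed $*$-cluster of $x$ in the full-plane percolation (enlarging if necessary to account for the ``avoids $\cC_n$'' restriction, which only makes the set smaller — so in fact $\cH(x)$ is contained in a closed $*$-cluster, whose size has exponential tails). Since a closed $*$-cluster of $\ge k^2$ vertices has diameter $\gtrsim k$ (in $\Z^2$ a connected set of $m$ vertices has diameter $\ge \sqrt m$ up to constants, here $\ge ck$), subcriticality of the dual gives $\P(|\cH(x)|>k^2)\le Ce^{-c'k}$. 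Union over $\le Cn^2$ vertices $x$ contributes again to the $cn^2 e^{-c'k}$ term. One subtlety to address: $\cH(x)$ as defined is the maximal $*$-connected closed cluster containing $x$ with no vertex in $\cC_n$; if $x\in\cC_n$ then $\cH(x)$ is empty and there is nothing to prove, so we may assume $x\notin\cC_n$, in which case the whole $*$-cluster of $x$ avoids $\cC_\infty$ (a $*$-cluster of closed vertices cannot touch the open cluster $\cC_\infty$), and intersecting with $\Lambda_n$ only shrinks it — so $\cH(x)$ is contained in the closed $*$-cluster of $x$, as claimed.

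\textbf{First bullet (chemical diameter of $\cC_n$).} This is the step I expect to be the main obstacle, since it needs both an upper and a lower bound on an intrinsic (chemical) distance inside a box. For the lower bound ``diameter $\ge n/2$'': with overwhelming probability $\cC_\infty$ contains two vertices near opposite corners of $\Lambda_n$ (by the second bullet applied at scale $k=O(\log n)$, or directly), and their chemical distance is at least their Euclidean distance $\gtrsim n$; moreover both lie in $\cC_n=\cC_\infty\cap\Lambda_n$ provided the connecting geodesic also stays in $\Lambda_n$ — which can be arranged, or one simply notes the Euclidean-distance lower bound on chemical diameter holds for any two points of $\cC_n$. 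For the upper bound ``diameter $\le Cn$'': this is the standard fact (Antal–Pisztora / Garet–Marchand type estimate, quoted elsewhere in the paper for bond percolation; the site version on $\Z^2$ for $p>0.9$ is entirely analogous and can be proven directly at high $p$ via a renormalization / block argument) that chemical distance between two connected points of $\cC_\infty$ within $\Lambda_n$ is, with probability $1-Cn^2 e^{-c'\sqrt n}$ (the $n^4$ factor accommodating the number of pairs), comparable to their $\Z^2$-distance, hence $\le Cn$. The $n^4 e^{-c'\sqrt n}$ term in the statement comes precisely from a union bound over the $O(n^4)$ pairs of vertices in $\Lambda_n$ of the event that two connected vertices have chemical distance exceeding $C$ times their Euclidean distance, together with a stretched-exponential ($\sqrt n$ rather than $n$) tail reflecting that for high $p$ on $\Z^2$ one still only gets a $\sqrt n$-rate in the easiest renormalization arguments. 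Assembling the three bullets by a union bound yields the claimed bound $1 - cn^4 e^{-c'\sqrt n} - cn^2 e^{-c'k}$ on $\P(D_{n,k})$.
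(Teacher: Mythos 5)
Your overall structure matches the paper's: prove the three bullets separately and union bound, handle bullets two and three via subcriticality of the dual (closed) $*$-percolation, and handle bullet one via an Antal--Pisztora chemical distance estimate (transferred from bond to site percolation by domination, as the paper does). Bullets two and three are essentially correct in spirit, though stated slightly loosely: for bullet two, $\Lambda_k(x)\cap\cC_\infty=\emptyset$ does not force a closed $*$-path from near $x$ to $\partial\Lambda_k(x)$ (the box could be full of open vertices in a large finite cluster); what it forces is a closed $*$-circuit of diameter $\ge 2k$ surrounding the box, and the paper sidesteps this by quoting \cite[eq.\ (4)]{Garet_chemical} directly. For bullet three, $\cH(x)$ is not in general contained in the closed $*$-cluster of $x$, since $x$ itself may be open (in a finite cluster); the paper's version of the argument instead produces a closed $*$-circuit of diameter $\gtrsim k$ from the inner boundary of $\cC_n$ around a large hole.

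The genuine gap is in the upper bound of bullet one. You attribute the $\sqrt{n}$-rate to ``the easiest renormalization arguments only giving $\sqrt{n}$,'' but this is not what is going on, and without the correct mechanism the argument does not close. The Antal--Pisztora bound controls $\P(x\leftrightarrow y,\ D(x,y)>C\|x-y\|)\le e^{-c\|x-y\|}$, which degenerates for \emph{nearby} pairs: for $\|x-y\|$ of order one this is only a fixed constant and certainly does not control the event $D(x,y)>Cn$. The paper's proof makes a near/far case split: for pairs with $\|x-y\|\ge\sqrt{n}/10$, AP directly gives $e^{-c\sqrt{n}}$; for nearby pairs with $\|x-y\|\le\sqrt{n}/10$ and $D(x,y)>Cn$, the open cluster through $x$ and $y$ must exit $\Lambda_{\sqrt{n}}(x)$ while $x$ and $y$ are \emph{not} connected inside $\Lambda_{\sqrt{n}}(x)$, which by planar duality forces a closed $*$-cluster of diameter $\gtrsim\sqrt{n}$, again of probability $e^{-c\sqrt{n}}$. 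The $\sqrt{n}$ is thus the crossover scale in this dichotomy, not an artifact of a weak renormalization; you would need to supply this (or an equivalent) two-case argument to make your bullet one rigorous. The lower bound $\ge n/2$ is fine and matches the paper: a vertex of $\cC_n$ in $\Lambda_{n/2}$ (which exists with probability $1-e^{-cn}$) together with the fact that $\cC_\infty$ must reach $\partial\Lambda_n$ gives two vertices of $\cC_n$ at Euclidean, hence chemical, distance $\ge n/2$.
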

\begin{proof}
This lemma follows from some known results which we first gather. Let $\P_p$ denote the probability measure induced by the percolation and let $D$ denote the chemical distance. Let $x \leftrightarrow y$ denote the event that $x$ is connected to $y$, with $y =\infty$ meaning that $x$ is in an infinite cluster. Let $|x|$ denote the graph distance in $\Z^2$.
\cite[Theorem 1.1]{AP_chemical} states that there is a constant $C = C(p)$ and $c$ such that for all $x \in \Z^2$,
\begin{equation}\label{eq:AP_1}
\P_p(0 \leftrightarrow x, D(0,x) > C|x|) \le e^{-c|x|}.
\end{equation}
Although the result in \cite{AP_chemical} is about bond percolation, it can be easily extended to an analogous result for site percolation since we took $p$ large enough. (e.g. by using \cite[Theorem 0.0]{liggett1997domination} again).
It also follows from \cite[eq.\ (4) and references therein]{Garet_chemical} and translation invariance that for any $x \in \Z^2$,
\begin{equation}
\P_p(\cC_\infty \cap \Lambda_{k}(x) = \emptyset) \le e^{-ck} \label{eq:AP_3}
\end{equation}
Note again, that the results cited hold for bond percolation, but they can be easily translated to site percolation as we took $p$ large enough.

Using \eqref{eq:AP_3}, we see that $\cC_\infty$ intersects $\Lambda_{n/2}$ with a probability which is exponentially high in $n$. This immediately implies that the chemical distance diameter of $\cC_n$ is at least $n/2$ on this event (since $\cC_n$ must intersect the complement of $\Lambda_n$).
Also, \eqref{eq:AP_3} and a union bound over all $x\in \Lambda_n$  ensures the second item is valid with probability at least $1-4n^2e^{-c'k}$. 

We now show that the third item holds with probability $1-cn^2e^{-ck}$. Indeed by isoperimetry of $\Z^2$, if the volume of the $*$-connected hole $\cH(x)$ is bigger than $k^2$ then there is a $*$-connected closed circuit separating $\cC_\infty$ from $x$ of diameter at least $ck$. It is known that this event has probability exponentially small in $k$. An union bound over all the vertices in $\Lambda_n \cap \Z^2$ upper bounds the probability of $\max\{|\cH(x)|: x \in \Lambda_n \cap \Z^2\} > k^2$ by $4n^2e^{-ck}$.

Furthermore, if the chemical distance diameter of $\cC_n$ is bigger than $Cn$ where $C$ is chosen according to \eqref{eq:AP_1}, then there must be vertices $x,y  \in \Lambda_n  \cap \Z^2$ with $D(x,y) >Cn$ and $x \leftrightarrow y$. For any pair with $\|x-y\| \ge \sqrt{n}/10$ this probability is exponentially small in $\sqrt{n}$ by \cref{eq:AP_1}. On the other hand if $\|x-y\| \le \sqrt{n}/10$ and the chemical distance is larger than $Cn$, then the cluster containing $x$ and $y$ must exit $\Lambda_x(\sqrt{n})$ but are not connected within $\Lambda_x(\sqrt{n})$. This must mean there is a $*$-connected closed cluster of diameter at least $c\sqrt{n}$ which separates these clusters. But this event also has probability exponentially small in $\sqrt{n}$. An union bound over the pairs $x,y$ shows that the probability of the diameter of $\cC_n$ being at least $Cn$ is at most $Cn^4e^{-c\sqrt{n}}$.\footnote{This part of the bound is probably not optimal, but since we will be content with  a stretched exponential bound anyway in the end, we do not pursue to make this optimal.}

The result follows by observing that $D_{n,k}$ contains the intersection of these events.
\end{proof}
Recall the notation $B(x,r)$ which denotes the graph distance ball of radius $r$ in the Voronoi triangulation from a point in $\Pi$ which is closest to $x$.

\begin{lemma}\label{lem:vol_upper}
There exist constants $C_{\Euc}, c,c'>0$ such that for all $n \ge 1$, $\Lambda_n \subseteq B(o,C_{\Euc} n)$ with probability at least $1-ce^{-c'\sqrt{n}}$ 
\end{lemma}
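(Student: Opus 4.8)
The plan is to obtain the inclusion from a one-scale renormalisation to supercritical site percolation, as behind \cref{lem:Bernoulli_perc}, combined with the cell‑connectivity estimate \cref{lem:cell}. Fix $s,A$ large and call a box $\Lambda_s(x)$, $x\in s\Z^2$, \emph{green} if $\Lambda_s(x)$ and all eight of its neighbours are $A$-red; greenness depends only on $\Pi$ in a $3\times 3$ block of boxes, so (as in \cref{lem:xi}, via \cite{liggett1997domination}) the green field is a finite-range dependent percolation on $s\Z^2$ stochastically dominating Bernoulli site percolation of parameter as close to $1$ as we wish, in particular supercritical with a unique infinite green cluster $\cG_\infty$, and with non-green $*$-clusters having an exponential size tail whose rate we can make arbitrarily large. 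Running the arguments behind \cref{lem:Bernoulli_perc} for the green field (which, being finite-range dependent with high density, still admits the renormalisation and Antal--Pisztora inputs used there) on $\Lambda_{Cn}$, we get, off an event of probability $\le Ce^{-c\sqrt n}$: (i) every box meeting $\Lambda_{Cn}$ is either in $\cG_\infty$ or lies in a non-green $*$-hole of at most $\sqrt n$ boxes whose surrounding green circuit is contained in $\cG_\infty$; (ii) any two boxes of $\cG_\infty$ within $\Lambda_{Cn}$ are joined inside $\cG_\infty$ by a green path of at most $Cn/s$ boxes. To these we adjoin the Poisson concentration event (iii): $|\Pi\cap\Lambda_{C\sqrt n\,s}(x)|\le C'n$ for every $x$ with $|x|\le n$, which fails with probability $\le Ce^{-cn}$. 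For each green box $\Lambda_s(y)$ fix a vertex $p(y)\in\Pi$ in its central sub-box (non-empty since $y$ is red); by \cref{lem:cell}, along any green path the successive centres $p(\cdot)$ are joined in $\mathbb T$ by paths of at most $L:=800As^2$ vertices lying in the union of the corresponding boxes.

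Now take any $v\in\Pi\cap\Lambda_n$ (and likewise $o$, which lies in $\Lambda_n$ off an event of probability $\le e^{-\pi n^2}$) and let $x_v$ be the box containing $v$. If $x_v\in\cG_\infty$ put $z:=x_v$; otherwise let $\mathcal D_v$ be the union of the $\le\sqrt n$ boxes of the non-green $*$-hole through $x_v$, with surrounding circuit $\Gamma\subset\cG_\infty$. Fire a straight ray from $v$. Since $\mathcal D_v$ is a union of $\le\sqrt n$ boxes it lies in $\Lambda_{2\sqrt n\,s}(x_v)$, and as $\mathcal D_v$ is surrounded by red (hence non-empty) boxes, every point of $\mathcal D_v$ is within $O(\sqrt n\,s)$ of $\Pi$; since the generator of the Voronoi cell at a point $w$ lies at distance exactly $\mathrm{dist}(w,\Pi)$ from $w$, every cell meeting $\mathcal D_v$ is generated by a point of $\Lambda_{C\sqrt n\,s}(x_v)$, so by (iii) at most $C'n$ cells meet $\mathcal D_v$. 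Hence within $\le C'n+L$ steps in $\mathbb T$ the ray reaches a cell $u$ meeting a green box $\Lambda_s(z)$ with $z\in\Gamma\subset\cG_\infty$; since $z$ is green, all cells meeting $\Lambda_s(z)$ are generated in the $A$-red block around $z$ and form a connected subgraph of $\mathbb T$ on $\le 3600As^2$ vertices, whence $d_{\mathbb T}(u,p(z))\le 3600As^2$. Thus $d_{\mathbb T}(v,p(z))\le C''n$ with $z\in\cG_\infty$ inside $\Lambda_{Cn}$, and likewise $d_{\mathbb T}(o,p(z'))\le C''n$ for some $z'\in\cG_\infty$ inside $\Lambda_{Cn}$. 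Finally (ii) gives a green path of $\le Cn/s$ boxes from $z$ to $z'$, whose image under $p(\cdot)$ yields $d_{\mathbb T}(p(z),p(z'))\le (Cn/s)L=C'''n$. Adding up, $d_{\mathbb T}(o,v)\le C_{\Euc}n$ for a constant $C_{\Euc}=C_{\Euc}(s,A)$ once $n$ exceeds an absolute constant; for smaller $n$ one makes the stated bound vacuous by enlarging $c$. Since this holds for every $v\in\Pi\cap\Lambda_n$ on the above event, $\Lambda_n\subset B(o,C_{\Euc}n)$ there, proving the lemma.

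The delicate points are the choice of scale and the detour from $v$ to $\cG_\infty$. The scale $\sqrt n$ is forced: a non-green $*$-hole through a fixed box exceeds $m$ boxes only with probability $e^{-cm}$, so after the union bound over $\asymp n^2$ boxes one can control all holes by $\sqrt n$ boxes only at cost $e^{-c\sqrt n}$, which dictates the exponent in the statement. Consequently a hole may be as large as $\sqrt n$ boxes and, if thin, of Euclidean diameter $\asymp\sqrt n\,s$, so a priori it could be crossed by very many small Voronoi cells; the step I expect to be the main obstacle is showing this detour is only $O(n)$ rather than superlinear, which works because the cells meeting such a hole have generators in an $O(\sqrt n\,s)$-neighbourhood of a region of area $O(n)$ and hence, by Poisson concentration, number only $O(n)$ — matching the $O(n)$ cost of travelling across $\cG_\infty$. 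The only other point needing care is upgrading the renormalisation inputs (i)--(ii) from genuine Bernoulli percolation to the finite-range dependent green field, which is routine via \cite{liggett1997domination} together with the supercritical tools already used for \cref{lem:Bernoulli_perc}.
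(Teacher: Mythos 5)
Your proof follows the same renormalisation strategy as the paper: coarse-grain $\Pi$ onto $s\Z^2$ via the red-box criterion, stochastically dominate a highly supercritical Bernoulli field via \cite{liggett1997domination}, invoke \cref{lem:Bernoulli_perc} (chemical distance $O(n)$, holes of $O(\sqrt n)$ boxes with stretched-exponential confidence), and use \cref{lem:cell} to turn a coarse open path into a $\mathbb T$-path of length $O(n)$. The choice to work with ``green'' ($3\times3$-all-red) boxes rather than plain red boxes and the $2$-dependent field $\xi$ is cosmetic.

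The genuine value you add is in the detour from $v$ (and from $o$) across the coarse hole to the surrounding circuit. The paper's proof asserts that ``the length of these paths can be at most the volume of the holes, so at most $n/100$ each,'' which silently equates the number of $s$-boxes in the hole with the number of $\mathbb T$-vertices a path across the hole must traverse. But the boxes constituting the hole are precisely those that are \emph{not} red, so there is no a priori bound on how many points of $\Pi$ they contain, and the shortest $\mathbb T$-path to the circuit could in principle use far more than $n/100$ vertices. Your fix --- that every Voronoi cell meeting $\cD_v$ is generated within $O(\sqrt n\,s)$ of $\cD_v$ (because $\cD_v$ has Euclidean diameter $O(\sqrt n\,s)$ and its boundary circuit is red, hence non-empty), combined with Poisson concentration plus a union bound over $O(n^2/s^2)$ boxes to cap the number of such generators at $O(n)$ --- is exactly the missing ingredient, and you correctly flag it as the delicate step. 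So this is a correct proof that takes essentially the paper's route but repairs a small gap in the paper's own argument.
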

\begin{proof}
Choose $A,s $ such that the $2$-dependant percolation $\xi$ as described in \cref{lem:xi} dominates a Bernoulli $(1-\ve')$ site percolation $\xi'$ with $\ve' <0.01$. Couple $\xi, \xi'$ so that $\xi$ dominates $\xi'$. Let $\cC_\infty$ be the unique infinite cluster of $\xi'$ and  let $\cC_{2n} = \Lambda_{2n}\cap \cC_\infty$. Choose $C$ as in \cref{lem:Bernoulli_perc}.

We will pick a box of the form $S:=\Lambda_s(x) \subset \Lambda_n$ with $x \in s\Z^2$ and show that any point in $\Pi \cap S$ can be connected to $o$ by a path of length $O(n)$ in $\mathbb T$ with stretched exponentially high probability. Also assume that if $S$ is empty, this event is vacuously satisfied. Since there are at most $n^2/s^2$ many such boxes, a further union bound does the job.

Assume $|S \cap \Pi| \neq \emptyset$ and assume $\cD'_n:= \cD_{2n, \sqrt{n}/10}$ occurs for $\xi'$ where the event is as described in \cref{lem:Bernoulli_perc} and fix a sample of the Point process in $\cD'_n$. 
Let $\cH_0$ and $\cH_1$ denote the holes of $\xi$ containing the box $S':=\Lambda_s(0) $ and $S$ respectively, where holes are as defined in third item of \cref{lem:Bernoulli_perc}. Note that $|\cH_0| \le n/100$ and $|\cH_1| \le n/100$ on $\cD'_n$ since $\xi$ dominates $\xi'$. These clusters are surrounded by open circuits in $\xi$ each lying completely in $\cC_{2n}$. Now using \cref{lem:cell} by concatenating paths in $\mathbb T$ in the boxes corresponding to these circuits, we can find circuits $C,C'$ in $\mathbb T$ completely surrounding $S$ and $S'$ respectively. This allows us to find a path from any point inside  $C$ to any point inside $C'$ as follows. Find the shortest path in $\mathbb T$ until $C,C'$ is hit and suppose they hit the circuits at vertices $u,v \in \mathbb T$, in boxes $B_u, B_v$ respectively. Clearly, the length of these paths can be at most the volume of the holes, so at most $n/100$ each. Then find the shortest path in $s\Z^2$ using $\cC_{2n}$ joining $B_u$ and $B_v$. This path has length at most $2Cn$ on $\cD'_n$. Furthermore since all the boxes corresponding to vertices in this path are $A$-red, using \cref{lem:cell} we can find a path in $\mathbb T$ joining $u$ and $v$ of length at most $1600As^2Cn$. Thus the length of the path is at most $C'n$ with $C' = 1600As^2C + 1/50$ on $\cD'_n$. We finish by applying \cref{lem:Bernoulli_perc} to lower bound  the probability of $\cD'_n$ by $1-ce^{-c'{\sqrt{n}}} $ for appropriate choices of $c,c'$.
\end{proof}

%
%
%
%
%
%

Now recall the following standard fact about Binomial random variables.
\begin{lemma}\label{lem:bin}
Let $X \sim $ Binomial $(n,1-\ve)$ and fix $c \in (0,1)$. Then for all $n \ge 1$
$$
\P(X < c n) \le e^{-C(\ve, c)n}
$$
where $C(\ve, c) \to \infty$ as $\ve \to 0$.
\end{lemma}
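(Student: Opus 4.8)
The plan is to use a standard Chernoff bound for the lower tail of a Binomial random variable. Write $X = \sum_{i=1}^n X_i$ where the $X_i$ are i.i.d.\ Bernoulli$(1-\ve)$, so that $\E X = (1-\ve)n$. First I would dispose of the trivial range: if $c \ge 1-\ve$ the statement either requires a more careful argument or is simply not the intended regime, so I would note that we only need $\ve$ small, and in particular $\ve$ small enough that $c < 1-\ve$; this is consistent with the hypothesis ``$C(\ve,c) \to \infty$ as $\ve \to 0$'', since the claim only needs to be nontrivial for small $\ve$. (Alternatively, and more cleanly, one can simply allow $C(\ve,c)$ to be defined as the relevant rate function and observe it is positive exactly when $c < 1-\ve$, which holds for all sufficiently small $\ve$; for larger $\ve$ the bound is vacuous with $C=0$, but the asymptotic statement is unaffected.)

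The main step is the exponential Markov inequality: for any $t>0$,
\[
\P(X < cn) = \P(e^{-tX} > e^{-tcn}) \le e^{tcn}\, \E[e^{-tX}] = e^{tcn}\bigl((1-\ve) e^{-t} + \ve\bigr)^n = \exp\bigl(n[\,tc + \log((1-\ve)e^{-t}+\ve)\,]\bigr).
\]
Optimizing over $t>0$ gives the Cram\'er rate function $C(\ve,c) = \sup_{t>0}\bigl(-tc - \log((1-\ve)e^{-t}+\ve)\bigr)$, which by the standard computation equals the relative entropy $c\log\frac{c}{1-\ve} + (1-c)\log\frac{1-c}{\ve}$ when $c < 1-\ve$. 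This is strictly positive in that regime, establishing $\P(X<cn)\le e^{-C(\ve,c)n}$ for all $n\ge 1$.

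Finally I would verify the limiting behavior $C(\ve,c)\to\infty$ as $\ve\to 0$: in the explicit formula, the term $(1-c)\log\frac{1-c}{\ve} \to +\infty$ as $\ve \to 0^+$ (since $c\in(0,1)$ is fixed, so $1-c>0$), while the other term $c\log\frac{c}{1-\ve} \to c\log c$ stays bounded; hence $C(\ve,c)\to\infty$. Since this is a routine large-deviations estimate, there is no real obstacle; the only point requiring a sentence of care is the range of validity ($c<1-\ve$), which is exactly the range in which the lemma is applied in the proof of the main theorems (where $\ve$ is taken as small as desired).
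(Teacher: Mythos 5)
Your argument is correct. The paper does not actually supply a proof of this lemma; it simply states it as a ``standard fact about Binomial random variables.'' Your Chernoff-bound derivation, identification of the optimal rate as the relative entropy $c\log\frac{c}{1-\ve}+(1-c)\log\frac{1-c}{\ve}$ for $c<1-\ve$, and the observation that the second term diverges as $\ve\to 0^+$ (while the first stays bounded) is exactly the standard proof being alluded to, and your remark about the regime $c<1-\ve$ correctly identifies the only point needing a word of care.
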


Now we prove a lemma which states a quantitative bound on the probability that a path in the Voronoi triangulation has many long edges. For connected subgraph $S$ in the Voronoi triangulation, its Euclidean diameter is denoted by $\diam_{\Euc}(S) :=\sup\{|x-y|, x,y \in S\}$.

\begin{lemma}\label{lem:small_diameter}
There exist constants $C_0, c_0>0$ such that for all $n \ge 1$ and for all $C>C_0$, the probability that there exists a connected set intersecting $\Lambda_1$ with at most $n$ vertices and Euclidean diameter \footnote{Euclidean diameter of a set $A \subset \R^2$ is $\sup \{|x-y|:x,y \in A\}$. } at least $Cn$ is at most $2^{-Cc_0n}$.
\end{lemma}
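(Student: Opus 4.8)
The plan is to use a coarse-graining / renormalization argument on the lattice $s\Z^2$ together with a union bound over the possible ``shapes'' of a connected set. First, fix a large constant $s$ and the associated $2$-dependent red/box percolation $\xi$ from \cref{lem:xi}; by choosing $s$ large and $\ve$ small, $\xi$ stochastically dominates a Bernoulli site percolation of parameter arbitrarily close to $1$. The key geometric observation is this: if $S$ is a connected subgraph of $\mathbb T$ intersecting $\Lambda_1$ with Euclidean diameter at least $Cn$, then $S$ must visit many boxes $\Lambda_s(x)$, $x\in s\Z^2$; more precisely, the set of $x\in s\Z^2$ whose box $\Lambda_s(x)$ is visited by $S$ (or is adjacent to one that is) forms a $*$-connected set in $s\Z^2$ of Euclidean diameter at least $Cn - O(s)$, hence of cardinality at least $\gtrsim Cn/s$. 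Among these boxes, call a box $\Lambda_s(x)$ \emph{bad} if it contains a vertex $v$ of $S$ such that the edge of $\mathbb T$ connecting $v$ to the next vertex of $S$ is ``long,'' say has an endpoint outside $\Lambda_{3s}(x)$; if a box is visited but not bad, then the portion of $S$ inside it stays within a bounded Euclidean neighbourhood. The point is that if $S$ has at most $n$ vertices but Euclidean diameter $\ge Cn$, then $S$ must contain at least $\sim Cn/s - (\text{number of visited boxes that are red})$ long edges, and a red box can contribute only boundedly many vertices of $S$ of low degree; a careful bookkeeping shows that a positive fraction (depending on $C$) of the $\sim Cn/s$ visited boxes must be \emph{non-red}, i.e. have an empty smaller box.

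Next I would set up the union bound. Enumerate $*$-connected ``lattice animals'' in $s\Z^2$ containing the box at the origin: the number of such animals with $k$ boxes is at most $\lambda^k$ for an absolute constant $\lambda$ (a standard Peierls-type counting bound for lattice animals / polyominoes, with the $*$-connectivity only changing the constant). For each such animal $\mathcal A$ with $k \ge c_1 Cn/s$ boxes, the probability that at least a fixed fraction $\rho$ of its boxes are non-red is, by the $2$-dependence of $\xi$ and \cref{lem:bin} (applied after passing to a sublattice to decouple, or directly via the domination by Bernoulli percolation and a Chernoff bound for Binomials), at most $e^{-c_2(\ve)\rho k}$, where $c_2(\ve)\to\infty$ as $\ve\to 0$ (equivalently as $s\to\infty$). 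Summing over $k$ and over the $\le \lambda^k$ animals of size $k$ gives a bound of the form $\sum_{k \ge c_1 Cn/s} \lambda^k e^{-c_2 \rho k}$, which — once $s$ is chosen large enough that $c_2\rho > \log\lambda + 1$ — is at most $2 e^{-k_0}$ with $k_0 \sim c_1 Cn/s$, i.e. at most $2^{-c_0 C n}$ for a suitable $c_0>0$ and all $C$ above a threshold $C_0$. This is exactly the claimed bound.

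The main obstacle I anticipate is the geometric bookkeeping in the first step: translating ``$S$ has few vertices but large Euclidean diameter'' into ``$S$ forces many non-red boxes along a $*$-connected lattice path.'' The subtlety is that a single very long edge of $\mathbb T$ can span a large Euclidean distance using only two vertices of $S$, so one cannot simply say ``large diameter $\Rightarrow$ many vertices.'' The resolution is that a long edge of $\mathbb T$ certifies a large \emph{empty disk} (the circumscribed disk of the corresponding Delaunay triangle, or the empty region witnessing that the two Voronoi cells are adjacent), and a large empty disk forces an empty smaller box, i.e. a non-red box, in its interior. So each ``jump'' in Euclidean distance not accounted for by the $\le n$ vertices is charged to a non-red box; combined with the fact that the visited red boxes can contribute only a controlled amount to both the vertex count and the covered Euclidean length, one gets that the number of non-red boxes encountered is $\gtrsim (Cn - O(n))/s \gtrsim Cn/s$ once $C > C_0$. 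I would isolate this as a deterministic sub-claim about the Delaunay triangulation of an arbitrary locally finite point set, proved via the empty-disk property, and then feed it into the probabilistic renormalization argument above.
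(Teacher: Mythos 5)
Your proposal is essentially the paper's own proof: coarse-grain to $s\Z^2$ via the red-box percolation $\xi$ from \cref{lem:xi}, observe that large Euclidean diameter forces a ($*$-)connected lattice animal of $\gtrsim Cn/s$ visited boxes, use the empty-disc property of the Delaunay/Voronoi triangulation to show that a long edge forces an empty small box (hence a non-red box) nearby, and close with a $\Delta^k$ lattice-animal count plus a Binomial tail bound (\cref{lem:bin}) after invoking the Liggett--Schonmann--Stacey domination. The only difference is in the bookkeeping: the paper defines a box to be \emph{good} when it and all nine boxes meeting it are red, and shows a good box forces a vertex of $D$ in that $3\times 3$ neighbourhood (hence at most $9n$ visited boxes are good when $|D|\le n$), which is the clean dual of your ``charge each long jump to a non-red box'' accounting and rests on exactly the same empty-semidisc observation.
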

\begin{proof}
Fix $\ve>0$ and choose $s$ large enough such that the site percolation $\xi$ defined in \cref{lem:xi} dominates a Bernoulli percolation with high probability, as asserted there. Take a connected subgraph $D$ with at most $n$ vertices intersecting $\Lambda_1$. Let $S(D)$ be the collection of squares of the form $\Lambda_s(x)$ with $x \in s\Z^2$ which intersect (some vertex, edge or triangle of) $D$. Let $k$ be the number of squares in $S(D)$. Because of the large Euclidean diameter  of $D$, $k>\frac{Cn}{2s}$. We say a square is \emph{good} if it is red and all the squares intersected by it is red, otherwise we say it's bad (recall the definition of a red box from the beginning of this section). This corresponds to a 4-dependant site percolation in $s\Z^2$. Thus again using  \cite[Theorem 0.0]{liggett1997domination}, and increasing $s$ if necessary, we can ensure that the collection of good sites dominates a $(1-\ve')$-Bernoulli site percolation in $s\Z^2$ with $\ve'<\ve$. 

We claim that if a square $A$ in $S(D)$ is good, the collection of squares $A'$ of the form $\{\Lambda_s(x): x \in s\Z^2\}$ which intersect $A$ contain at least 1 vertex from $D$. To see this observe that if no edge intersects $A$ then $A \cap \Pi$ is empty, which is not possible as $A$ is red. On the other hand if an edge $e$ intersects $A$, and one of the endpoints of this edge does not lie in one of $A'$, then $e$ must have length at least $2s$. We know that one of the semi-discs of the disc with diameter $e$ is empty in a Voronoi triangulation. This must mean that one of the squares of side $s/10$ in one of the squares $A'$ must be empty, which is a contradiction to the fact that all squares intersecting $A$ are red.  Thus one of the endpoints of $e$ must be in some $A'$. One consequence of this is that the number of good squares is at most $9n $ (since we can overcount a vertex at most 9 times).

It is well known that the number of connected sets in $\Z^2$ with $k $ vertices containing the origin is at most $\Delta^k$ for some $\Delta>0$.
Choose $\ve$ small enough so that $e^{-C(\ve, 1/2)}<(2\Delta)^{-1} $ where $C(\ve,1/2)$ is as in \cref{lem:bin}. Now choose $C >C_0:=36 s=36s(\ve)$ and $c_0 = (2s)^{-1}$. For a fixed connected set in $s\Z^2$ containing the origin and containing $k$ vertices, the number of good vertices $X$ dominates a Binomial $(k,1-\ve)$. Recall that $k >\frac{Cn}{2s} >18 n$. Thus 
$$
\P(X < 9n) \le \P(X <k/2 ) \le e^{-C(\ve, 1/2)k}<(2\Delta)^{-k}
$$
again using \cref{lem:bin}. Since the number of connected sets is at most $\Delta^k$, an union bound gives that the required probability is at most $$(1/2)^k\le (1/2)^{Cn/2s} = (1/2)^{Cc_0n}$$ as desired. 
\end{proof}
An immediate corollary is the following:

\begin{corollary}\label{cor:ball_euc}
There exists a constant $C,c>0$ such that for all $n \ge 1$, $B(o,n) \subseteq \Lambda_{Cn}$ with probability at least $1-e^{-cn}$.
\end{corollary}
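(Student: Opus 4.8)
The plan is to read off \cref{cor:ball_euc} from \cref{lem:small_diameter}, the only real point being to replace the graph ball $B(o,n)$, which may contain of order $n^2$ vertices, by a single geodesic path, which has at most $n+1$ vertices. Fix a constant $C$ to be chosen at the end. If $B(o,n)\not\subseteq\Lambda_{Cn}$, then there is a vertex $v$ with $d_{\mathbb T}(o,v)\le n$ and $v\notin\Lambda_{Cn}$; fix a geodesic from $o$ to $v$ and let $\gamma$ be the corresponding connected subgraph of $\mathbb T$. Then $\gamma$ has at most $n+1\le 2n$ vertices and contains both $o$ and $v$, so on the event $\{\|o\|_\infty\le n\}$ we get $\diam_{\Euc}(\gamma)\ge |v-o|\ge \|v\|_\infty-\|o\|_\infty>(C-1)n$, and $\gamma$ intersects the unit box $\Lambda_1(x)$ where $x$ is the integer point nearest to $o$ (so $x\in\Z^2\cap\Lambda_{n+1}$).

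First I would control the location of $o$. Since $o$ is the Euclidean-nearest vertex to the origin, the event $\{\|o\|_\infty>n\}$ is contained in the event that the disk of radius $n$ about the origin contains no point of $\Pi$, which has probability $e^{-\pi n^2}\le e^{-\pi n}$ for $n\ge1$.

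Next I would invoke a translated version of \cref{lem:small_diameter}: by translation invariance of $\Pi$ and equivariance of the Voronoi triangulation, for every $x\in\R^2$ the probability that there is a connected subgraph of $\mathbb T$ intersecting $\Lambda_1(x)$, with at most $2n$ vertices and Euclidean diameter at least $(C-1)n$, equals the same probability for $\Lambda_1$; applying \cref{lem:small_diameter} with ``$n$'' replaced by $2n$ and ratio $(C-1)n/(2n)=(C-1)/2$ (legitimate as soon as $(C-1)/2>C_0$, i.e.\ $C>2C_0+1$) bounds this by $2^{-(C-1)c_0 n}$. A union bound over the at most $(2n+3)^2$ integer points $x\in\Z^2\cap\Lambda_{n+1}$ then yields
\[
\P\bigl(B(o,n)\not\subseteq\Lambda_{Cn}\bigr)\;\le\; e^{-\pi n}+(2n+3)^2\,2^{-(C-1)c_0 n}.
\]
Fixing any $C>2C_0+1$ and absorbing the polynomial prefactor into the exponential (using $n\ge1$), the right-hand side is at most $e^{-cn}$ for a suitable $c=c(C,c_0)>0$, which is the claim.

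I do not expect a genuine obstacle in this argument: it is essentially bookkeeping on top of \cref{lem:small_diameter}. The one step to get right is the reduction to a geodesic path (needed because \cref{lem:small_diameter} is a statement about subgraphs with few vertices), together with handling the random position of $o$ via the crude bound on $\{\|o\|_\infty>n\}$ and a union bound over the $O(n^2)$ unit boxes meeting $\Lambda_n$, whose polynomial cost is dominated by the exponential gain from \cref{lem:small_diameter}.
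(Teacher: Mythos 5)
Your proposal is correct and follows essentially the same route as the paper: reduce $B(o,n)\not\subseteq\Lambda_{Cn}$ to the existence of a geodesic path of at most $n+1$ vertices with large Euclidean diameter, control the location of $o$ by the Poisson emptiness bound, apply \cref{lem:small_diameter} (via translation invariance) and union-bound over the $O(n^2)$ unit boxes near the origin. The paper's proof is just a terse version of this argument (with a small typo — it should read ``probability at least $1-e^{-cn^2}$'' for the event $o\in\Lambda_{n/10}$), and your write-up adds the necessary bookkeeping, notably the explicit constraint $C>2C_0+1$ required to apply \cref{lem:small_diameter}.
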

\begin{proof}
Note that the event $o \in \Lambda_{n/10}$ has probability at least $e^{-cn^2}$ since $\Pi$ is a Poisson process. Apply \cref{lem:small_diameter} to any path with $n$ vertices starting from $O(n^2)$ many translates of $\Lambda_1$ inside $\Lambda_{n/10}$.
\end{proof}

We now use \cref{lem:small_diameter} to establish a quantitative isoperimetric inequality. We recall some topological notions first. For a finite connected set of vertices $A \subset \mathbb T$, we can consider the subgraph induced by $A$, which we also call $A$ admitting an abuse of notation. This allows us to consider the faces which has all its incident edges in $A$. Overall, we can think of $A$ as a subset of $\R^2$, by taking the union of all the vertices, edges and faces described as such. We define the \emph{complement} of $A$ to be the complement of the union of the faces and edges of $A$. We say $A$ is \emph{simply connected} if the complement has a unique component (which necessarily is the unbounded component a.s.). If $A$ is not simply connected, it's complement may contain a certain number of finite components and one unique infinite component. Let $\partial_V(A)$ denote the vertex boundary of $A$, which is the collection of vertices of $A$ which has some neighbour outside $A$.

\begin{lemma}\label{lem:iso_1}
There exist constants $\ve_0, c,c', C>0$ such that for all $ n \ge 1, \alpha\in (0,2)$ the following holds. The probability that there exists a connected set $A \subset B(o,n)$ with $k \in [ n^{\alpha}, \frac12n^2]$ vertices but $$\cB(A):= \{|\partial_V A| \le \ve_o\sqrt{k} \} \cup\{ \diam(A) \le C\ve_0\sqrt{k}\}$$ holds is at most $ce^{-c'\sqrt{k}}$. Here $\diam$ denotes the graph distance diameter.
\end{lemma}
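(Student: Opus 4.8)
The plan is to reduce the isoperimetric statement to the diameter control already established in \cref{lem:small_diameter}, via a dichotomy on how ``spread out'' the set $A$ is. First I would dispose of the easy half of the event $\cB(A)$: if $\diam(A) \le C\ve_0 \sqrt{k}$, then $A$ is a connected set with $k \ge n^\alpha$ vertices contained in a Euclidean ball of radius comparable to $C\ve_0\sqrt{k}$ (using \cref{cor:ball_euc} to pass between graph-distance diameter and Euclidean diameter, at a stretched-exponential cost). But a connected subgraph of $\mathbb T$ with $k$ vertices and small Euclidean diameter forces many vertices into a small region, i.e.\ it forces a box $\Lambda_s(x)$ intersected by $A$ to contain more than $As^2$ points of $\Pi$ --- equivalently, one of the red boxes fails to be $A$-red; chasing the counting, having $k$ points of $\Pi$ in a disc of radius $O(\ve_0\sqrt k)$ is a large-deviation event for a Poisson variable of mean $O(\ve_0^2 k)$, which has probability $e^{-c k}$ once $\ve_0$ is small, and after a union bound over $O(n^2)$ translates of $\Lambda_1$ and over $k$ this contributes $ce^{-c'\sqrt k}$. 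So the substantive case is $|\partial_V A| \le \ve_0 \sqrt{k}$ together with $\diam(A) > C\ve_0\sqrt k$.

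For that case the key geometric observation is that the vertex boundary $\partial_V A$ ``traces out'' the set $A$: since $A$ is connected with large graph-distance diameter, $\partial_V A$ together with the induced adjacencies (or its $*$-closure) must contain a connected structure of comparably large diameter. More precisely I would argue that $\partial_V A$, viewed as a subset of $\mathbb T$, contains a connected set whose Euclidean diameter is at least a constant times $\diam(A) \ge C\ve_0\sqrt k$ --- intuitively, to enclose a set of diameter $D$ one needs a boundary ``cycle'' of diameter $\gtrsim D$, and any two antipodal-ish points of $A$ are separated by boundary vertices along every path joining them. Then $\partial_V A$ contains a connected set of at most $\ve_0\sqrt k$ vertices intersecting some fixed translate of $\Lambda_1$ (after translating so that this piece meets the origin-box) but of Euclidean diameter at least $\tfrac{C}{2}\ve_0\sqrt k$; applying \cref{lem:small_diameter} with $n$ there replaced by $\ve_0\sqrt k$ and $C$ there replaced by a large absolute constant gives probability at most $2^{-c \ve_0 \sqrt k}$ for each choice of the base box, and a union bound over $O(n^2)$ boxes and over $k \in [n^\alpha, \tfrac12 n^2]$ is absorbed into $ce^{-c'\sqrt k}$ (using $k \ge n^\alpha$ to beat the polynomial factors). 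One also needs \cref{cor:ball_euc} to ensure that $A \subset B(o,n)$ genuinely sits in $\Lambda_{Cn}$, so that all the relevant base boxes lie in a region of size $O(n^2)$.

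The main obstacle I anticipate is the purely combinatorial-topological step of the previous paragraph: making precise, with the right quantitative bound, the claim that a connected subgraph $A$ of the triangulation with graph-diameter $D$ must have a connected boundary set of Euclidean diameter $\gtrsim D$, \emph{without} assuming $A$ is simply connected. The honest way is to use the planar topology set up just before the statement (the faces/edges of $A$, its complement, the unbounded component): a shortest path in $\mathbb T$ realizing the diameter of $A$ must, near each ``pinch'', pass close to $\partial_V A$, and one extracts a connected piece of $\partial_V A$ (in the $*$-topology, since $\mathbb T$ is a triangulation a $*$-connected boundary circuit is what blocks the complement) separating the two endpoints; this circuit has diameter at least that of $A$ up to a bounded factor, because it must surround a set of that diameter. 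I would also need to be slightly careful that ``$\ve_0\sqrt k$ boundary vertices'' really does limit both the number of vertices \emph{and} implicitly the combinatorial complexity enough to invoke \cref{lem:small_diameter} cleanly; if the topological extraction is annoying one can instead fix $\ve_0$ very small and argue via the $s\Z^2$ renormalization directly (a small boundary means few bad/non-red boxes are needed to block $A$ off, contradicting the domination by supercritical Bernoulli site percolation from \cref{lem:xi}), which is morally the same argument transported to the lattice $s\Z^2$. Everything else --- the Poisson large deviation, the passage between Euclidean and graph balls, the union bounds --- is routine given \cref{lem:small_diameter,cor:ball_euc,lem:bin,lem:xi}.
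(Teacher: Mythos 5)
Your proposal divides $\cB(A)$ into the two cases $\diam(A) \le C\ve_0\sqrt k$ and ($|\partial_V A| \le \ve_0\sqrt k$ together with $\diam(A) > C\ve_0\sqrt k$). The first case is handled correctly (graph-ball to Euclidean box via translates of \cref{cor:ball_euc}, then the Poisson large deviation for $k$ points in a region of area $O(\ve_0^2 k)$, with union bounds); this is fine and in fact fills in a sub-case that the paper glosses over.

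The difficulty is entirely in your second case, and you have correctly identified where the problem is, but not resolved it. Your argument rests on the claim that if $\diam(A) > C\ve_0\sqrt k$ (a \emph{graph-distance} diameter) and $A$ is simply connected, then $\partial_V A$ contains a connected piece of \emph{Euclidean} diameter $\gtrsim \diam(A)$, which you then feed into \cref{lem:small_diameter} to get a contradiction. This claim is not a deterministic topological fact. The statement ``a boundary cycle enclosing a set of diameter $D$ must itself have diameter $\gtrsim D$'' is true when $D$ is the \emph{Euclidean} diameter (that is exactly the fact $\diam_{\Euc}(A)=\diam_{\Euc}(\partial_V A)$ for simply connected $A$, which the paper uses), but you are applying it with $D$ the graph diameter, and graph distance can be much larger than Euclidean distance if the triangulation is locally fine. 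There is no deterministic inequality $\diam_{\Euc}(\partial_V A)\ge c\,\diam(A)$ in a general planar triangulation, and no lemma in the paper supplies a probabilistic version of it in the needed direction (\cref{cor:ball_euc} and \cref{lem:small_diameter} both give \emph{upper} bounds on Euclidean extent in terms of graph quantities, never lower bounds). Your fallback ``renormalize to $s\Z^2$'' suggestion does not obviously repair this either, since the issue is a lower bound on Euclidean spread.

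The fix is to drop the case split entirely and never invoke $\diam(A)$ in the small-boundary case. After reducing to $A$ simply connected (filling holes, which can only shrink $|\partial_V A|$ and $\diam(A)$), $\partial_V A$ is a connected set of at most $\ve_0\sqrt k$ vertices. Apply \cref{lem:small_diameter} to $\partial_V A$ directly: with probability $1-e^{-Cc_0\ve_0\sqrt k}$ its Euclidean diameter is at most $C\ve_0\sqrt k$. Now use the topological fact $\diam_{\Euc}(A)=\diam_{\Euc}(\partial_V A)$ to conclude that $A$, which has $k$ vertices, lies in a box of side $O(\ve_0\sqrt k)$; for $\ve_0$ small this is a Poisson large deviation of probability $e^{-c'k}$. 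Union bounding over $O(n^2)$ translates of $\Lambda_1$ and over $k\ge n^\alpha$ gives the claimed $ce^{-c'\sqrt k}$ bound. This is the paper's route: \cref{lem:small_diameter} is used as an \emph{upper} bound on the Euclidean diameter of the small set $\partial_V A$, not as a device to contradict a (nonexistent) lower bound.
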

\begin{proof}
Roughly, the idea is as follows: if $|\partial_V A|$ is small, then the Euclidean diameter must also be small up to a constant factor by \cref{lem:small_diameter}, and consequently a square of small diameter containing at least as many points of $A$ is unlikely. We now make this idea rigorous by carefully tracking the quantifiers.

First observe that it is enough to prove the bound for sets  $A$ which are simply connected, for otherwise we can simply fill in the finite holes, and this operation decreases boundary size but increases the size of $A$ and also decreases $\diam(A)$. Thus $\partial_V A$ has a single connected component. It is a standard fact that the Euclidean diameter of $A$ is the same as the diameter of its boundary. Thus for any point $x$ in $A$, $\Lambda_{2m}(x) \supseteq A$ where $m = \diam_{\Euc} (\partial_VA)$.

Fix $k \in [n^{\alpha} , \frac12n^2]$. First using \cref{cor:ball_euc} find a $C_1$ such that $B(o,n) \subset \Lambda_{C_1n}$ with probability at least $1-e^{-c_1 n}$.  Suppose $\partial_V A$ intersects $\Lambda_1$ for some $A$ with $|A|=k$ and $|\partial_V(A)| \le \ve_0\sqrt{k}$ where $\ve_0$ is to be fine tuned later. Now pick $c_0, C_0$ as in \cref{lem:small_diameter} and $C>C_0$. The probability that any connected set of size at most $\ve_0 \sqrt{k} $ intersecting $\Lambda_1$ has Euclidean diameter at most $C\ve_0 \sqrt{k}$ is at least $ 1-e^{-Cc_0\ve_0\sqrt{k}}.$ Thus on this event $A \subset \Lambda_{m'}$ with $m' =2C\ve_0 \sqrt{k}$; and in particular, on this event,  $\Lambda_{2m}$ contains at least $k$ vertices. Now pick an $\ve_0 = \ve_0(C)$ small enough so that the the probability that the number of vertices in $\Lambda_{2m}$ is at least $k$ is at most $e^{-c'k}$ for some $c'$. Thus overall for this choice of $\ve_0$, the probability that there exists a set $A$ intersecting $\Lambda_1$ such that $\cB(A)$ holds but $|A| \ge k$ is at most 
$e^{-Cc_0\ve_0\sqrt{k}} + e^{-c'k}$.

Finally, on the event that $B(o,n) \subset \Lambda_{C_1n}$, we can take a union bound of the above replacing $\Lambda_1$ by at most $4C_1^2n^2$ many translates of $\Lambda_1$. This yields that the probability of the event in the lemma is at most
$$
4C_1^2n^2 (e^{-Cc_0\ve_0\sqrt{k}} + e^{-c'k}) + e^{-c_1n}.
$$
 Since $k \ge n^{\alpha}$ with $\alpha \in (0,1)$, we can find $c,c'>0$ such that the above quantity is bounded above by $ce^{-c'\sqrt{k}}$. Taking a further union bound over all integers $k \in [n^{\alpha},\frac12n^2]$, we conclude by modifying the choice of $c,c'$ appropriately.
\end{proof}
Notice that the boundary in \cref{lem:iso_1} considers the vertex boundary in the whole Voronoi triangulation. However, the bound in \eqref{eq:i_A} only counts the boundary edges of $A \subset B(y,r)$ inside $B(y,r)$. In the next lemma, we strengthen \cref{lem:iso_1} to show that even discarding the edges going out of $B(y,r)$, there are many edges left over in the boundary with high probability. To do this, we require the following elementary geometric lemma.

\begin{lemma}\label{lem:geometry}
Let $A \subset B(o,n)$ and suppose $\partial_V(A)$ has a single connected component. Assume that $\partial_V(A) \cap \partial_V(B(o,n)) \neq \emptyset$. Let $\partial_{\mathsf{int}}(A)$ denote the set of vertices in $\partial_V(A)$ with at least one neighbour in $B(o,n) \setminus A$ and assume $|\partial_{\mathsf{int}}(A)| = k$. Then for any $v \in \partial_{\mathsf{int}}(A)$, $A \subset B(v, 2k)$.
\end{lemma}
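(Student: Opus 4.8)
The plan is to extract from the two hypotheses the fact that $A$ is \emph{simply connected} as a subset of $\R^2$, and then run a planar argument on its boundary cycle. Since $A$ is connected and $\partial_V(A)$ has a single connected component, $A$ has no holes, so its topological boundary is a single closed curve, which I realize as the vertex cycle $C:=\partial_V(A)$. Every vertex of $C$ has a neighbour outside $A$; that neighbour lies either in $B(o,n)\setminus A$ — putting the vertex in $\partial_{\mathsf{int}}(A)$ — or outside $B(o,n)$ — putting the vertex in $\partial_V(B(o,n))$ — possibly both. Reading $C$ cyclically, the $\partial_{\mathsf{int}}(A)$-vertices (there are $k$ of them) therefore occur in at most $k$ maximal runs (call them the \emph{interior arcs}), separated by at most $k$ maximal runs lying entirely in $\partial_V(B(o,n))$ (the \emph{exterior arcs}). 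The hypothesis $\partial_V(A)\cap\partial_V(B(o,n))\neq\emptyset$ guarantees that at least one exterior arc is present, i.e.\ the interior arcs do not wrap all the way around $C$; this rules out the degenerate case $A=B(o,n)$, where $\partial_{\mathsf{int}}(A)=\emptyset$ and the statement is vacuous.

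Since $\partial_{\mathsf{int}}(A)\subseteq A$, it suffices to prove $\diam_G(A)\le 2k$. The key geometric idea is that the exterior arcs of $C$ sit on the distance sphere $\partial_V(B(o,n))$ of the ball, and so contribute nothing to the ``inward width'' of $A$: after collapsing the complement of $B(o,n)$ to a single vertex (which keeps the ambient graph planar, now drawn on a sphere), each exterior arc collapses to a bounded-length detour, so from the point of view of distances within $A$ the boundary of $A$ effectively has only $O(k)$ vertices, all of them accounted for by the interior arcs. I would then implement this as follows: given $w\in A$ and a prescribed $v\in\partial_{\mathsf{int}}(A)$, build a path from $w$ to $v$ by going from $w$ out to the cycle $C$ and then along $C$ to $v$, traversing each exterior arc through the collapsed ``boundary'' vertex (equivalently, by hopping between the two interior arcs it separates), so that the portion of the path spent on $C$ has length $O(k)$; the discrete Jordan curve theorem for the (Voronoi) triangulation ensures the routing is consistent, and a careful accounting of constants is meant to land the total at $2k$.

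The main obstacle is the first leg of that path: bounding the ``inward depth'' of $w$ in $A$ (the distance from $w$ to $C$, and more precisely to $\partial_{\mathsf{int}}(A)$) by something comparable to $k$. This cannot be done by a naive argument, since a ``fat'' region $A$ can have inward depth of order $n$ — but a fat $A$ necessarily has a long interior boundary, so what is really needed is an isoperimetric-type comparison, and the value of the topological reduction above (passing to the sphere, using that $C$ is a single Jordan curve and that the exterior arcs are pinned to $\partial_V(B(o,n))$) is precisely that it makes this comparison elementary and combinatorial rather than analytic. Secondary technical points to handle are the vertices of $C$ lying in both $\partial_{\mathsf{int}}(A)$ and $\partial_V(B(o,n))$ (the transitions between interior and exterior arcs), the possibility that $B(o,n)\setminus A$ has several components — each of which, by simple connectedness of $A$, must touch $\partial_V(B(o,n))$, which is exactly what lets the collapse argument go through — and the bookkeeping needed to obtain the stated constant $2$ rather than some larger absolute constant.
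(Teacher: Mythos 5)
Your proposal and the paper's proof take genuinely different routes, and I do not think yours can be completed as written. The central device in your plan is to route a path from $w$ to $v$ along the boundary cycle $C=\partial_V(A)$ while ``hopping across'' each exterior arc, justified by collapsing the complement of $B(o,n)$ to a single vertex. But contracting $\R^2\setminus B(o,n)$ only \emph{shrinks} distances; it does not furnish an actual short path in $\mathbb T$ between the two endpoints of an exterior arc. Those endpoints lie on $\partial_V(B(o,n))$ and can be far apart in the graph, so ``the portion of the path spent on $C$ has length $O(k)$'' does not follow. The second gap is one you flag yourself: bounding the inward depth $d(w,\partial_{\mathsf{int}}(A))$. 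You say this requires an isoperimetric-type comparison, but that comparison is never supplied, and in fact no isoperimetry is needed.

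The paper closes both issues at once with a single radial-distance observation that your proposal never exploits: use the graph distance from $o$. Every vertex of $\partial_V(A)$ that is not in $\partial_V(B(o,n))$ lies in $\partial_{\mathsf{int}}(A)$, so a shortest path \emph{inside the connected subgraph} $\partial_V(A)$ from any $v\in\partial_{\mathsf{int}}(A)$ to the nonempty set $\partial_V(A)\cap\partial_V(B(o,n))$ visits only $\partial_{\mathsf{int}}(A)$-vertices before its last step, and hence has length at most $k$. Since $\partial_V(B(o,n))$ sits at graph distance exactly $n$ from $o$, this forces $d(o,v)\ge n-k$ for every $v\in\partial_{\mathsf{int}}(A)$. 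Now for any $w\in A$, a geodesic from $o$ to $w$ enters $A$ through some $w'\in\partial_{\mathsf{int}}(A)$ (its predecessor lies in $B(o,n)\setminus A$ because a geodesic from $o$ stays inside $B(o,n)$), and $d(w',w)=d(o,w)-d(o,w')\le n-(n-k)=k$. This is precisely the inward-depth bound your plan is missing, obtained cheaply and without any Jordan-curve decomposition or sphere argument. Your reduction to bounding the graph-distance diameter of $A$ is fine, but the boundary-cycle routing will not deliver that bound; it should be replaced by the radial-distance argument above.
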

\begin{proof}
Recall $d$ denotes the graph distance in $\mathbb T$. Notice that for any $v  \in \partial_{\mathsf{int}}$, $d(v, \partial_V(B(o,n))) \le k$ since $\partial_V(A) \cap \partial_V(B(o,n)) \neq \emptyset$ and hence $\partial_{\mathsf{int}}$ and $\partial_V(B(o,n))$ must be at distance $1$. Thus triangle inequality yields $d(o,v) \ge n-k$.
Observe that for any vertex $w \in A$, a geodesic from $o$ to $w$ must intersect $\partial_{\mathsf{int}}(A)$ since $\mathbb T$ is planar (since this geodesic must enter $A$ through some vertex, and this vertex is necessarily in $\partial_{\mathsf{int}}(A)$). Let $w'$ be such a vertex which is closest to $o$. By triangle inequality, and since $d(o,w') \ge n-k$, $d(w',w)  =  d(o,w) - d(o,w') \le n - (n-k)  = k$. Then $d(v,w ) \le  d(v,w') + d(w',w)\le 2k $. This completes the proof. 
\end{proof}

\begin{lemma}\label{lem:iso_2}
There exist constants $\ve_1, c,c'>0$ such that for all $ n \ge 1, \alpha\in (0,2)$ and $k \in [n^{\alpha}, \frac12 n^2]$ the following holds. The probability that there exists a connected set $A \subset B(o,n)$ with $k$ vertices but with $|\partial_{\mathsf{int}}(A)| \le \ve_1\sqrt{k}$ is at most $ce^{-c'\sqrt{k}}$ where $\partial_{\mathsf{int}}(A)$ is defined as in \cref{lem:geometry}.
\end{lemma}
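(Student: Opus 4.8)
The plan is to reduce the statement to \cref{lem:iso_1} by a case analysis on whether $\partial_V(A)$, the vertex boundary of $A$ computed in the full triangulation $\mathbb T$, is entirely contained in the interior of $B(o,n)$ or touches $\partial_V(B(o,n))$. As in the proof of \cref{lem:iso_1}, I would first reduce to simply connected $A$ (filling holes only helps), so that $\partial_V(A)$ has a single connected component, and work on the high-probability event $B(o,n) \subseteq \Lambda_{C_1 n}$ furnished by \cref{cor:ball_euc}. Fix $k \in [n^{\alpha}, \tfrac12 n^2]$.

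\emph{Case 1: $\partial_V(A) \cap \partial_V(B(o,n)) = \emptyset$.} Then every vertex of $\partial_V(A)$ lies in $B(o,n)$ and has a neighbour in $B(o,n)\setminus A$ (the neighbour witnessing membership in $\partial_V(A)$ cannot be outside $B(o,n)$ since $\partial_V(A)$ does not reach the boundary, and it is not in $A$), so $\partial_{\mathsf{int}}(A) = \partial_V(A)$. Hence $|\partial_{\mathsf{int}}(A)| \le \ve_1 \sqrt k$ forces $|\partial_V(A)| \le \ve_1 \sqrt k$; choosing $\ve_1 \le \ve_0$ (with $\ve_0$ from \cref{lem:iso_1}) this is exactly the event $\cB(A)$ of \cref{lem:iso_1} with $|A| = k$, whose probability is at most $ce^{-c'\sqrt k}$.

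\emph{Case 2: $\partial_V(A) \cap \partial_V(B(o,n)) \neq \emptyset$.} Here I invoke \cref{lem:geometry}: if $|\partial_{\mathsf{int}}(A)| = j \le \ve_1\sqrt k$ then picking any $v \in \partial_{\mathsf{int}}(A)$ we get $A \subseteq B(v, 2j) \subseteq B(v, 2\ve_1\sqrt k)$, so in particular $\diam(A) \le 4\ve_1 \sqrt k$ in the graph distance. Now the same volume argument as in \cref{lem:iso_1} applies: a graph-distance ball of radius $2\ve_1\sqrt k$ centred at a Voronoi vertex has small Euclidean diameter with stretched-exponentially-high probability (combine \cref{cor:ball_euc}, applied at the appropriate scale/translate, with \cref{lem:small_diameter} to control the Euclidean size of any connected set of at most $O(\ve_1\sqrt k)$ vertices, or directly bound $B(v, 2\ve_1\sqrt k) \subseteq \Lambda_{2C\ve_1\sqrt k}(v)$ up to a failure probability $e^{-c\ve_1\sqrt k}$), and then for $\ve_1 = \ve_1(C)$ small enough the probability that $\Lambda_{4C\ve_1\sqrt k}(v)$ contains at least $k$ points of $\Pi$ is at most $e^{-c'k}$, since the expected count is $(8C\ve_1\sqrt k)^2 = 64 C^2 \ve_1^2 k \ll k$. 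Taking a union bound over the $O(C_1^2 n^2)$ translates of $\Lambda_1$ that a vertex $v$ could lie in (on the event $B(o,n)\subseteq\Lambda_{C_1 n}$), and over the at most $k$ possible values of $j = |\partial_{\mathsf{int}}(A)|$, absorbs the polynomial factors into $ce^{-c'\sqrt k}$ because $k \ge n^{\alpha}$.

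Combining the two cases and taking a further union bound over the integers $k \in [n^{\alpha}, \tfrac12 n^2]$ (again absorbed by adjusting $c, c'$, using $k \ge n^\alpha$) gives the claim. The main obstacle is Case 2: one has to be careful that the quantifier structure is right — the set $A$ is not fixed in advance, and a priori there could be many candidate boundary sizes $j$, so the union bound must be taken over $j$ and over the location of an anchoring vertex $v$ before appealing to the Euclidean-diameter and volume estimates, exactly as the quantifier-tracking remark preceding \cref{lem:geometry} anticipates.
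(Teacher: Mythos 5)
Your proof is correct and takes essentially the same approach as the paper: reduce to simply connected $A$, split on whether $\partial_V(A)$ meets $\partial_V(B(o,n))$, use the equality $\partial_{\mathsf{int}}(A)=\partial_V(A)$ together with \cref{lem:iso_1} in the first case, and use \cref{lem:geometry} in the second. The only difference is that in Case 2 you re-derive the volume argument from scratch, which is unnecessary: since the event $\cB(A)$ in \cref{lem:iso_1} is the \emph{union} $\{|\partial_V A|\le \ve_0\sqrt k\}\cup\{\diam(A)\le C\ve_0\sqrt k\}$, the bound $\diam(A)\le 4|\partial_{\mathsf{int}}(A)|\le 4\ve_1\sqrt k$ from \cref{lem:geometry} already puts you inside the event controlled by \cref{lem:iso_1} once $\ve_1\le C\ve_0/4$, so choosing $\ve_1=\min\{\ve_0,C\ve_0/4\}$ closes both cases immediately.
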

\begin{proof}
If all the vertices of $A$ are in $B(o,n-1)$, then $\partial_V(A) = \partial_{\mathsf{int}}(A)$, and we simply choose $\ve_1 = \ve_0$ where $\ve_0$ is as in \cref{lem:iso_1}. On the other hand, if $A $ intersects $\partial B(o,n)$, then by \cref{lem:geometry}, the graph distance diameter of $A$ is at most $4|\partial_{\mathsf{int}}(A)|$.
Thus again by \cref{lem:iso_1}, we can choose $\ve_1 = \min\{\ve_0, C\ve_0/4\} $. 
\end{proof}

\begin{lemma}\label{lem:iso_3}
For any $A \subseteq B(o,n)$ ,
$$
i_{B(o,n)}(A) = \frac{|\partial_E(A ,  B(o,n) \setminus A)|}{|A|_E} \ge  \frac{|\partial_{\mathsf{int}}(A)|}{7|A|}
$$
where $|A|_E$ is the sum of the degrees of vertices in $A$ counting only edges in $B(o,n)$ and $\partial_{\mathsf{int}}(A)$ is as in \cref{lem:geometry}.
\end{lemma}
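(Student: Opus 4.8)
The plan is to reduce the claimed inequality to two elementary facts about the planar graph $B(o,n)$ — a double-counting identity for $|A|_E$ and the Euler-type edge bound for planar graphs — with no probabilistic input whatsoever. Throughout, let $e(A)$ denote the number of edges of $\mathbb T$ with both endpoints in $A$; since $B(o,n)$ is the induced subgraph of $\mathbb T$ on the distance ball, these are exactly the edges of $B(o,n)$ lying inside $A$. By the very definition of $|A|_E$ as $\sum_{v\in A}\deg_{B(o,n)}(v)$, each edge inside $A$ is counted twice and each edge of $\partial_E(A,B(o,n)\setminus A)$ exactly once, so
$$|A|_E \;=\; 2\,e(A) + |\partial_E(A,B(o,n)\setminus A)|.$$
The subgraph of $\mathbb T$ induced by $A$ is simple and planar, so Euler's formula gives $e(A)\le 3|A|-6\le 3|A|$ (the bound $e(A)\le 3|A|$ being trivial when $|A|\le 2$), whence
$$|A|_E \;\le\; 6|A| + |\partial_E(A,B(o,n)\setminus A)|.$$

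Next I would bound $|\partial_{\mathsf{int}}(A)|$ from above in the two obvious ways. By the definition of $\partial_{\mathsf{int}}(A)$ from \cref{lem:geometry}, each of its vertices has a neighbour in $B(o,n)\setminus A$, hence is incident to at least one edge of $\partial_E(A,B(o,n)\setminus A)$; since each such edge has a unique endpoint in $A$, distinct vertices of $\partial_{\mathsf{int}}(A)$ account for distinct edges, giving $|\partial_{\mathsf{int}}(A)|\le|\partial_E(A,B(o,n)\setminus A)|$. Also $\partial_{\mathsf{int}}(A)\subseteq A$, so $|\partial_{\mathsf{int}}(A)|\le|A|$. Writing $p:=|\partial_E(A,B(o,n)\setminus A)|$ and $q:=|\partial_{\mathsf{int}}(A)|$ and combining with the displays above,
$$q\,|A|_E \;\le\; q\,(6|A|+p) \;=\; 6|A|\,q + q\,p \;\le\; 6|A|\,p + |A|\,p \;=\; 7|A|\,p,$$
using $q\le p$ on the first term and $q\le|A|$ on the second. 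Dividing through by $7|A|\cdot|A|_E$ gives exactly $\frac{p}{|A|_E}\ge\frac{q}{7|A|}$, which is the assertion; the degenerate case $p=0$ forces $A=V(B(o,n))$ and $q=0$, so $0\ge0$ holds there as well.

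I do not expect a genuine obstacle — the entire content is the two-line computation above. The one point deserving care in the write-up is that all degrees must be counted \emph{within} $B(o,n)$: because $\mathbb T$ carries no uniform degree bound, one cannot hope to control $|A|_E$ by a constant multiple of $|A|$, so it is essential to retain the additive $|\partial_E(A,B(o,n)\setminus A)|$ term supplied by planarity rather than to absorb it. (One could alternatively phrase the planar edge bound through the closed neighbourhood of $A$ in $B(o,n)$, but applying $e(A)\le 3|A|-6$ directly to the induced subgraph on $A$ is the cleanest route.)
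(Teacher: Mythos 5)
Your proof is correct and follows the same underlying strategy as the paper: double-count $|A|_E = 2e(A) + |\partial_E(A,B(o,n)\setminus A)|$, bound $e(A)$ by $3|A|$ via planarity, then compare $|\partial_{\mathsf{int}}(A)|$ to both $|\partial_E(A,B(o,n)\setminus A)|$ and $|A|$ to extract the factor $7$. The one worthwhile difference is that you invoke the generic planar bound $e(A)\le 3|A|-6$ directly, whereas the paper rederives it through Euler's formula for the induced subgraph after first reducing, ``without loss of generality,'' to the case where all bounded faces are triangles by filling in the holes. Your route is cleaner and, in fact, safer: the paper's justification for that WLOG is that filling in holes decreases $i_{B(o,n)}(A)$, but for a lower bound on $i_{B(o,n)}(A)$ this monotonicity points the wrong way, and the right-hand side $|\partial_{\mathsf{int}}(A)|/(7|A|)$ also changes under the filling, so the reduction as stated is not self-evidently sufficient. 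By working with the induced subgraph on $A$ as-is, you avoid needing any such reduction, and your final algebraic step ($q(6|A|+p)\le 7|A|p$ via $q\le p$ and $q\le|A|$) is exactly the cross-multiplied form of the paper's monotonicity argument for $x\mapsto x/(6+x)$.
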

\begin{proof}
Let $E$ be the edge set of the subgraph induced by $A$. Notice that this subgraph is a subgraph of a triangulation, hence the faces form a collection of triangles and (potentially non-simple) polygons, call the latter outer faces. Without loss of generality, we can assume that there is only one outer face, as otherwise we can fill in the bounded faces, thereby decreasing $i_{B(o,n)}(A) $. Let $|P|$ denote the perimeter of the outer face which counts the number of edges in it, with the edges having both sides adjacent to $P$ counted twice. Let $F$ denote the set of triangles in this graph. By Euler's formula, $|A| -|E| + |F| + 1 = 2$. Also note, $2|E| = 3|F|+|P|$. Combining, we get $|E| = 3|A|-|P|-3 \le 3|A|$. Also note $|A|_E = 2|E| + |\partial_E(A ,  B(o,n) \setminus A))| $. Thus
$$
i_{B(o,n)}(A) \ge \frac{|\partial_E(A ,  B(o,n) \setminus A))|}{6|A|+ |\partial_E(A ,  B(o,n) \setminus A))|} \ge \frac{|\partial_{\mathsf{int}}(A)|}{7|A|}
$$ 
The last inequality follows from $|\partial_E(A ,  B(o,n) \setminus A))| \ge |\partial_{\mathsf{int}}(A)|$, the fact that $x \mapsto x/(6+x)$ is increasing in $x$ and $x/(6+x) \ge x/7$ for all $x \in (0,1)$.
\end{proof}

\begin{proof}[Proof of \cref{thm:RSW_Poisson}]
We will simply prove that the items in \cref{lem:crossing_criterion} holds with stretched exponentially high probability for appropriate choice of constants (we use the notations there). Firstly, $(iv)$ holds with exponentially high probability in $n^2$ for a small enough choice of $d$ using standard estimate of a Poisson variable.  It follows from \cref{lem:vol_upper,cor:ball_euc} that $(i)$ holds with stretched exponentially high probability in $n \log n$ for an appropriately large choice of $C_{\Euc}$. Now fix $r \in [n^{1/9}, C_{\Euc}n \log n]$. It is easy to see that the volume of $B(o,r)$ is upper and lower bounded by some constant in $r^2$ with stretched exponentially high probability in $r$, again using \cref{lem:vol_upper,cor:ball_euc} and standard properties of a Poisson process. Applying \cref{cor:ball_euc} to $O(n^2)$
many translates of $\Lambda_1$, we can also ensure $(ii),(v)$ holds with exponentially high probability in $r$. Finally, choosing $\ve_1$ as in \cref{lem:iso_2}, we can ensure that $(iii)$ holds with $C_I = \ve_1/7$ and with probability at least stretched exponentially high in $r$ (and consequently stretched exponentially high in $n$.). Now we take an union bound over integers $r \in [n^{1/9}, C_{\Euc}n \log n]$ to complete the proof.
\end{proof}

\section{General criterion for macroscopic decorrelation in Uniform spanning trees in random environment}\label{sec:UST}
Let $\mu$ be a probability measure supported on infinite, locally finite, one ended, random, planar graphs embedded in a proper way in the plane. Recall that an embedding is proper if no two edges cross each other. In this section, we present a result which is an adaptation of \cite[Theorem 4.21]{BLR16}, but for random graphs with law $\mu$. Recall that the examples which concern us are Poisson Voronoi triangulation, and the infinite cluster of a supercritical Bernoulli percolation. We now state the two main assumptions on $\mu$. Let $G = (V(G), E(G))$ be a sample from $\mu$. Recall the definition of $C$-crossable from \cref{def:RSW}.

\begin{enumerate}[{(}i{)}]
\item The law of $\mu$ is invariant under translations and $\pi/2$-rotations of the plane.
\item (RSW). There exist constants $c_\mu, d,d', \alpha>0 $ such that for all $n \ge 1$, $\Lambda_{4n,n}$ is $c_\mu$-crossable with $\mu$-probability at least $1-de^{-d'n^{\alpha}}$.
\item (Vol) There exist constants $C_\mu, \beta>0$ such that  $\mu(|\Lambda_n  \cap V(G)| \ge C_\mu n^2) \le e^{-\beta n^2}$.
\end{enumerate}
We now state our result for decoupling of uniform spanning trees from the point of view of scaling limits. Let $\delta G$ be a rescaling of the embedded graph $G$ by $\delta$. Let $D\subset \R^2$ be a domain which will always be an open, simply connected set in this section. Let $D^\delta$ denote the graph induced by the vertices of $G^\delta$ in $D$ where $\partial_V D^\delta$ is identified into a single vertex (i.e. we consider a wired boundary condition). A spanning tree of a finite graph $H$ is a subgraph which contains all the vertices of $H$ and does not contain any cycle. Recall that a uniform spanning tree on a finite graph is simply a uniformly picked spanning tree of the graph.

Random walks and uniform spanning trees are intimately related to each other via the celebrated \textbf{Wilson's algorithm} which we quickly describe here. Order the vertices of $D^\delta$ in any order. Now perform a loop erased random walk, i.e. a simple random walk where one chronologically erases the loop. We continue this until the walk hits the boundary vertex (the wired boundary). This samples a simple path $\gamma$ starting at $v_1$ and ending at the boundary vertex. Then we sample the next vertex in the ordering which is not in $\gamma$, and repeat the same procedure with the new boundary being the boundary vertex union $\gamma$. We iterate, until all the vertex belong to some path. The final object thus obtained is a sample from a uniform spanning tree. See \cite{wilson96}, or \cite[Section 4.1]{LyonsPeres} for a proof of this fact.

\begin{thm}\label{thm:coupling}
Suppose $G$ sampled from $\mu$ satisfies the above conditions for some constants $c_\mu, \alpha, C_\mu, \beta$ and let $z_1,\ldots, z_k$ be $k$ points in a domain $D$ where $  \Lambda_1 \subset D \subset \Lambda_{10}$ and let $r = \min_{i \neq j} |z_i - z_j| \wedge d(z_i, \partial D)$ where $d$ is the Euclidean distance.
There exists a constant $c = c(c_\mu, \alpha, C_\mu, \beta)>0$ such that for all $\ve, \ve'>0$, there exists a $\delta_0 = \delta_0(\ve')$ such that for all $\delta \le \ve \wedge \delta_0(\ve')$ the following holds.  There exists a collection of graphs $\cG$ with $\mu(\cG) >1-\ve$ such that for any $G$ in $\cG$ the following holds.

Let $\cT^\delta$ be a sample of a wired Uniform spanning tree in $D^\delta$. Then there exists a coupling $\mathbf P^G$ between $\cT^\delta$ and a collection $\{\cT_i^\delta\}_{1 \le i \le k}$ such that
\begin{itemize}
\item $\{\cT_i^\delta\}_{1 \le i \le k}$ are i.i.d. copies of $\cT^\delta$.
\item $\cT_i^\delta \cap \Lambda_R(z_i)  =  \cT^\delta \cap\Lambda_R(z_i)$ for all $1 \le i \le k$ where $R$ is a random variable satisfying
$$
\mathbf P^G (R \le \ve' r) \le (\ve')^c.
$$
\end{itemize}
\end{thm}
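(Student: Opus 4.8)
The plan is to reduce Theorem~\ref{thm:coupling} to the corresponding statement in \cite[Theorem 4.21]{BLR16}, whose proof uses only the uniform crossing property of the graph, applied on the region where RSW actually holds. The first step is to quantify the "bad regions". For a sample $G$ from $\mu$, call a dyadic-type square $\Lambda_{4s,s}(z)$ (ranging over a suitable net of locations $z$ and scales $s$ comparable to powers of $2$) \emph{bad} if it fails to be $c_\mu$-crossable, and also declare a square bad if the volume bound $|\Lambda_s(z) \cap V(G)| \ge C_\mu s^2$ fails. By assumptions (ii) and (iii), together with translation invariance (i), a fixed square at scale $s$ is bad with $\mu$-probability at most $de^{-d's^\alpha} + e^{-\beta s^2}$. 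A Borel--Cantelli / union-bound argument over the (polynomially many in $1/r$ and in the scale) relevant squares then shows: for any $\ve>0$ there is a deterministic function $s \mapsto$ threshold and a collection $\cG$ with $\mu(\cG)>1-\ve$ such that, for $G \in \cG$, every bad square intersecting $\Lambda_{20}$ at scale $s$ has $s$ smaller than some slowly growing function (in fact one can afford $s \le (\log(1/\rho))^{2}$-type bounds at Euclidean location $\rho$ away from the relevant points), and moreover the diameters of maximal clusters of bad squares around each $z_i$ are controlled with a polynomial tail. This is exactly the "diameters of bad regions are small with high probability" step flagged in the introduction.

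The second step is to run the $\delta\to 0$ limit. Fix $G \in \cG$. Because $G$ is one-ended, planar, properly embedded, and locally finite with the volume control from (iii), the graphs $D^\delta$ are finite for each $\delta>0$ and Wilson's algorithm applies. On the good part of $G$ — i.e. outside the (now controlled) bad regions — the graph satisfies uniform crossing at every scale above the local bad-scale threshold, so the random-walk / loop-erased-random-walk estimates used in \cite{BLR16} (Beurling-type hitting estimates, uniform separation/hitting bounds, and the resulting rough Harnack inequalities for discrete harmonic functions on $D^\delta$) hold verbatim with the \emph{fixed} scale $\delta_0^{-1}$ of \cite{BLR16} replaced by the location-dependent bad-scale cutoff. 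The coupling of $\cT^\delta$ with i.i.d.\ copies $\cT_i^\delta$ is then constructed exactly as in \cite{BLR16}: reveal the branches of $\cT^\delta$ emanating from a mesoscopic annulus around each $z_i$ using Wilson's algorithm, argue that with high probability these branches disconnect $\Lambda_R(z_i)$ from $\partial D$ for a macroscopic (but random) $R$, and resample independently inside. The only change is that the radius $R$ must now also be required to exceed the bad-scale cutoff near $z_i$, which costs only an additional polynomial-in-$\ve'$ term by the first step. Collecting the tail bounds — the \cite{BLR16} polynomial bound $(\ve')^{c}$ for the crossing-based part, plus the polynomial-in-$\ve'$ bad-region tail — and taking $\delta_0(\ve')$ small enough that the mesoscopic annuli contain enough vertices, yields $\mathbf{P}^G(R \le \ve' r) \le (\ve')^{c}$ after adjusting $c$.

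I expect the main obstacle to be the bookkeeping in Step~1: making precise a single collection $\cG$ (not depending on $\delta$) such that, uniformly over $G \in \cG$, one has a \emph{deterministic} modulus controlling how small a square can be forced to be bad as a function of its Euclidean distance from $\{z_1,\dots,z_k\}\cup \partial D$, and simultaneously a polynomial tail on the diameter of the maximal bad-cluster around each $z_i$ (which is what feeds into the resampling radius). The subtlety is that $c_\mu$-crossability of a large rectangle can fail merely because a small sub-rectangle is bad, so one must set up the bad-square percolation at all scales and use the stretched-exponential bound $de^{-d's^\alpha}$ to beat the union bound over $\sim (1/r)^{O(1)}$ locations and $O(\log(1/\delta))$ scales; since we only claim a high-probability event $\cG$ (and an a.s.\ statement along a subsequence, cf.\ \cref{a.s.}), the stretched-exponential rate is comfortably enough. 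Once this deterministic geometric input is in place, the rest is a faithful transcription of the arguments in \cite[Section 4]{BLR16}.
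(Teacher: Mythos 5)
Your proposal follows the same overall route as the paper: use the stretched-exponential RSW bound plus a union bound over a polynomial number of locations and $O(\log\delta^{-1})$ scales to control the size of the ``bad scale,'' then replace the fixed scale $\delta_0^{-1}$ in \cite[Section 4.4]{BLR16} with this random cutoff, and absorb the resulting extra error into a polynomial term in $\ve'$ by taking $\delta_0(\ve')$ small. Two remarks on where you diverge from what the paper actually does.

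First, the paper does not need a location-dependent bad-scale modulus or any control on ``diameters of maximal bad clusters.'' It simply defines $R_{\max}^\delta$ as the supremum over all $z\in\Lambda_{10}\cap\delta\Z^2$ of the largest dyadic scale at which the annulus around $z$ fails to be $c_\mu$-crossable, and \cref{lem:hole} shows by a crude union bound over the $O(\delta^{-2})$ many net points that $R_{\max}^\delta\le R_0^\delta=\delta[\ln(C/(\ve\delta^2))]^{1/\alpha}$ off an event of probability $\ve$. This single uniform cutoff (tending to $0$ as $\delta\to 0$) is enough: \cref{lem:RSW} turns it into uniform crossability of all rectangles of scale $\ge R_0^\delta$, and the Beurling estimate \cref{lem:Beurling} then carries the cutoff through. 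The bad-cluster percolation picture you describe is not wrong, but it is an unnecessary detour and would create exactly the bookkeeping headaches you flag.

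Second, the obstacle you anticipate --- exhibiting a $\cG$ that does not depend on $\delta$ --- is not required by the theorem. The quantifier structure in \cref{thm:coupling} is ``for all $\delta\le\ve\wedge\delta_0(\ve')$ there exists a collection $\cG$,'' so $\cG$ may (and in the paper's proof does) depend on $\delta$, via both the cutoff $R_0^\delta$ and the volume bound $|\Lambda_{10}\cap\delta G|\le 100\,C_\mu\delta^{-2}$. The $\delta$-independent statement is deferred to \cref{a.s.}, which is a separate Borel--Cantelli consequence along the dyadic subsequence $\delta_k=\ve_k=2^{-k}$.

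One point you gloss over that the paper treats carefully: in the Schramm-finiteness step (\cref{lem:schramm_finiteness}), for the scales $j\ge j_{\max}\sim\log_6(r/R_0^\delta)$ the Beurling estimate is unavailable, and one must instead combine the crude bound $(1/2)^{j_{\max}^2/C_0}$ with the assumption (Vol) to bound the number of vertices by $O(\delta^{-2})$. It is precisely the $\log\log$ loss in $R_0^\delta$ versus the mesh $\delta$ that makes this work, and this is where the argument genuinely differs from \cite{BLR16}. Your phrase ``taking $\delta_0(\ve')$ small enough that the mesoscopic annuli contain enough vertices'' points in the right direction but does not isolate this step, which is the crux of the adaptation.
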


\begin{remark}\label{a.s.}
If we fix $\ve'>0$ and apply \cref{thm:coupling} for a sequence $\delta_k = \ve_k = 2^{-k}$ with $2^{-k} < \delta_0(\ve')$, then by Borel--Cantelli, there exists a collection $\cG$ with $\mu(\cG)=1$ such that for any $G \in \cG$, the coupling $\mathbf P^G$ as in \cref{thm:coupling} holds for all $k$ large enough depending on $G$.
\end{remark}

In \cite[Theorem 4.21]{BLR16} an analogous version was proved but for a \emph{fixed} graph, where the condition (RSW) was valid above a certain fixed scale (called $\delta_0$ in that article). The main new input in \cref{thm:coupling} is that an analogous result holds with high probability with the more general condition (RSW) above. One can also get an almost sure statement if we allow ourselves to choose a subsequence of $\delta$ (notice the dependance $\ve \ge \delta$ in the statement of the theorem above), see \cref{a.s.} below

Let $A(z, m, n)$ be the annulus $\Lambda_n \setminus \Lambda_m$ for $n >m$. Let $R_1,\ldots, R_4$ be the translations and 90 degree rotations of $\Lambda_{2n,(n-m)/2}$ whose union is $A(z,m,n)$. We say $A(z, m, n)$ is $c^4$-crossable if all the rectangles $R_1, \ldots, R_4$ are $c$-crossable (by Markov property of random walk, the probability for a random walk to make a full turn in $A(z, m, n)$ is at least $c^4$).
Let $$R^\delta(z) = \max\{2^i \delta: A(z, 2^i\delta, 2^{i+1}\delta) \text{ is \textbf{not} $c_\mu$-crossable in $\delta$G}\}.$$ Define 

$$R_{\max}^\delta = \max\{R^\delta(z) : z \in \Lambda_{10} \cap \delta \Z^2  \}.$$
\begin{lemma}\label{lem:hole}
There exists a constant $C>0$ such that for all $\ve, \delta>0$, 
$$
\mu(R_{\max}^\delta >R_0^\delta) \le \ve.
$$
where
\begin{equation}
R_0^\delta = R_0^\delta( \ve)= \delta \left[\ln\left(\frac{C}{\ve \delta^2}\right)    \right]^{1/\alpha}\label{eq:R0}
\end{equation}
and $\alpha$ is as in (RSW).
\end{lemma}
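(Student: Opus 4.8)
The plan is to union-bound over the relevant spatial scale and over dyadic radii, using the (RSW) assumption to control each bad event. For a fixed $z \in \Lambda_{10} \cap \delta\Z^2$ and a fixed dyadic scale $2^i\delta$, the annulus $A(z, 2^i\delta, 2^{i+1}\delta)$ fails to be $c_\mu$-crossable only if one of its four constituent rectangles $R_1,\dots,R_4$ (each a translate/rotation of $\Lambda_{2^{i+1}\delta \cdot (\text{aspect })}$, i.e. of the form $\Lambda_{4m,m}$ after rescaling) fails to be $c_\mu$-crossable. By the (RSW) assumption applied at scale $m \sim 2^i$ (in the unrescaled graph $G$, since crossability of a rectangle in $\delta G$ is the crossability of the corresponding rectangle in $G$), together with translation/rotation invariance (assumption (i)), each such failure has $\mu$-probability at most $d\, e^{-d' (2^i)^{\alpha}}$. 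Summing over the four rectangles, $\mu(A(z, 2^i\delta, 2^{i+1}\delta)\text{ not }c_\mu\text{-crossable}) \le 4d\, e^{-d'(2^i)^{\alpha}}$.

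Next I would note that $\{R^\delta(z) > R_0^\delta\}$ means that some annulus $A(z, 2^i\delta, 2^{i+1}\delta)$ with $2^i\delta > R_0^\delta$, i.e. with $2^i > R_0^\delta/\delta = [\ln(C/(\ve\delta^2))]^{1/\alpha}$, is not $c_\mu$-crossable. Call $i_0$ the smallest such $i$, so $(2^{i_0})^{\alpha} \ge \ln(C/(\ve\delta^2))$. Then
\begin{align*}
\mu(R^\delta(z) > R_0^\delta) &\le \sum_{i \ge i_0} 4d\, e^{-d'(2^i)^{\alpha}} \le 4d \sum_{i \ge i_0} e^{-d' \cdot 2^{\alpha(i-i_0)}(2^{i_0})^{\alpha}} \\
&\le 4d\, e^{-d'(2^{i_0})^{\alpha}} \sum_{j \ge 0} e^{-d'(2^{\alpha j}-1)(2^{i_0})^{\alpha}} \le C_1 d\, e^{-d'(2^{i_0})^{\alpha}} \le C_1 d \left(\frac{\ve\delta^2}{C}\right)^{d'},
\end{align*}
where $C_1$ bounds the geometric-type tail sum (uniformly, since $(2^{i_0})^{\alpha}\ge 1$ for $\delta$ small, and one can always absorb the finitely many small-scale terms or just note the series converges). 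Finally I would union-bound over the at most $O(\delta^{-2})$ lattice points $z \in \Lambda_{10} \cap \delta\Z^2$, giving
$$
\mu(R_{\max}^\delta > R_0^\delta) \le \frac{C_2}{\delta^2} \left(\frac{\ve\delta^2}{C}\right)^{d'}.
$$
Choosing the constant $C$ large enough — and, if $d' < 1$, first raising the bracket in \eqref{eq:R0} to an appropriate fixed power of $1/\alpha$ so that the exponent beats the $\delta^{-2}$ prefactor, or simply redefining $C$ to depend on the fixed exponents $d', \alpha$ — makes the right-hand side at most $\ve$. (If $d' \ge 1$ the bound $\frac{C_2}{\delta^2}(\ve\delta^2/C)^{d'} \le C_2 \ve^{d'}\delta^{2(d'-1)}/C^{d'} \le \ve$ holds directly for $C$ large; the only genuinely delicate bookkeeping is matching powers when $d' < 1$, which is why the logarithm in \eqref{eq:R0} is raised to the power $1/\alpha$ rather than $1$ — one should trace through that the definition of $R_0^\delta$ as given is calibrated precisely so this works.)

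The main obstacle is purely the constant-chasing in the last step: ensuring that the polynomial loss $\delta^{-2}$ from the spatial union bound is absorbed by the stretched-exponential-in-scale decay once that decay is re-expressed through the definition \eqref{eq:R0} of $R_0^\delta$. There is no conceptual difficulty — assumptions (i) and (RSW) do all the work — but one must be careful that (RSW) is stated for $\Lambda_{4n,n}$ while the annulus decomposition produces rectangles of aspect ratio matching $\Lambda_{2n,(n-m)/2} = \Lambda_{2n, 2^{i-1}\delta/ \cdot}$; after rescaling each of the four pieces is a rectangle whose longer-to-shorter ratio is $4$ (width $2 \cdot 2^{i+1}\delta$, height $(2^{i+1}\delta - 2^i\delta)/2 = 2^{i-1}\delta$), so (RSW) applies directly at the scale $n \asymp 2^{i-1}$, and one only needs to check crossability is a monotone/comparable notion across the slightly different boxes, which is immediate from the Markov property as noted in the text preceding the lemma.
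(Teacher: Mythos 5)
Your proposal follows the paper's proof essentially verbatim: a union bound over dyadic annuli centered at each lattice point (each failure controlled by (RSW) applied to the four constituent rectangles, using translation/rotation invariance), then a union bound over the $O(\delta^{-2})$ points in $\Lambda_{10}\cap\delta\Z^2$, and finally the observation that setting $(R_0^\delta/\delta)^\alpha = \ln(C/(\ve\delta^2))$ turns the stretched-exponential tail into $\ve\delta^2/C$ which cancels the $\delta^{-2}$ prefactor. One clarification on the point you were unsure about: the exponent $1/\alpha$ in \eqref{eq:R0} only converts the stretched exponential into a polynomial and does \emph{not} absorb $d'$; the paper sidesteps this by silently taking $d=d'=1$ in the sum $\sum_{j\ge m}4e^{-2^{\alpha j}}$. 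The honest fix is either to insert a multiplicative constant $(d')^{-1/\alpha}$ into $R_0^\delta$, or to observe that one may assume $d'=1$ without loss by slightly decreasing $\alpha$ (since $d\,e^{-d'n^{\alpha}}\le e^{-n^{\alpha'}}$ for any $\alpha'<\alpha$ and $n$ large); in either case only the relation $R_0^\delta\asymp\delta\,\log^{1/\alpha}(\delta^{-1})$ is used downstream, so the adjustment is harmless.
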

Note that for  any $\ve$ which is at least $\delta^m$ for some $m$, $R^\delta \to 0$ as $\delta \to 0$.
\begin{proof}
Let $$\cB_1  = \cup_{ j \ge m} \{A(0, 2^{j}\delta, 2^{j+1}\delta) \text{ is not $c_\mu$-crossable in $\delta G$}\}  . $$ 
Notice that by (RSW), the invariance of $\mu$ under translation and $\pi/2$-rotations, and a union bound,
$$
\mu(\cB_1) \le \sum_{ j \ge m} 4e^{-2^{\alpha j}} = 4\sum_{j \ge 0}(e^{-2^{\alpha m}})^{2^{\alpha j}} \le C' e^{-2^{\alpha m}}.
$$
for some constant $C'>0$ independent of everything else. By translation invariance, the same bound is true if we replace $0$ by any other $z \in \delta \Z^2$. Since there are at most $400/\delta^2$ many points in $\Lambda_{10} \cap \delta \Z^2$, by an union bound:
$$
\mu(R_{\max}^\delta >R_0^\delta)  \le C' \frac{400}{\delta^2 } e^{-2^{\alpha m}}
$$
where we choose $2^m\delta = R_0^\delta$. The right hand side above is at most $\ve$ if we choose $C = 400C'$. This completes the proof.
\end{proof}
\begin{remark}\label{a.s.2}
Note that for a choice of the sequence $\delta_k = \ve_k = 2^{-k}$ by Borel--Cantelli, $\mu$-a.s. $R_{\max}^{\delta_k} \le R_0^{\delta_k}$ for all $k$ large enough. Also for this choice, $R_0^{\delta_k} \to 0$ as $k \to \infty$.
\end{remark}

An application of \cref{lem:hole} is that for a large enough rectangle depending on $R_{\max}^\delta$, RSW holds. 
\begin{lemma}\label{lem:RSW}
Fix a graph $G$ such that $ R_{\max}^\delta \le R_0^\delta$ where $R_0^\delta$ is as in \eqref{eq:R0}. Then for any $m \ge R_0$ and any rectangle  lying completely inside $D$ which is a translate of $\Lambda_{20m, 5m}$ is $c_\mu^{10}$-crossable. The same holds true for any rectangle which is a translation and a $\pi/2$-rotation of  $\Lambda_{20m, 5m}$ and lies completely inside $D$.
\end{lemma}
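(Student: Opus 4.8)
The plan is to reduce a left‑to‑right crossing of $\Lambda_{20m,5m}$ to a concatenation of a bounded number of rectangle crossings that are \emph{guaranteed} by the hypothesis, and then to multiply the corresponding probabilities using the strong Markov property. First I would record what $R_{\max}^\delta\le R_0^\delta$ buys us. By the definition of $R^\delta(\cdot)$ and $R_{\max}^\delta$, for every $z\in\Lambda_{10}\cap\delta\Z^2$ and every dyadic scale $\rho=2^i\delta$ with $\rho>R_0^\delta$ the annulus $A(z,\rho,2\rho)$ is $c_\mu$‑crossable in $\delta G$, i.e.\ each of its four constituent rectangles is $c_\mu$‑crossable in the sense of \cref{def:RSW} (extended to aspect‑$4$ rectangles exactly as in the assumption (RSW)); note that each such constituent rectangle is a translate and/or $\pi/2$‑rotation of a rectangle of aspect ratio $4$ whose side lengths are comparable to $\rho$. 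Since $m\ge R_0$, there is at least one (in fact, a geometric range of) dyadic scale $\rho$ with $R_0^\delta<\rho\asymp m$ at our disposal, and $5m$ is comfortably larger than $R_0^\delta$.

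Next I would build the chain. Fix a translate $\Lambda_{20m,5m}(z_0)\subset D$ and let $B_1,B_2$ be its left and right boxes as in the aspect‑$4$ version of \cref{def:RSW} used in (RSW). Choose $\rho$ to be the largest admissible dyadic scale, so that $\rho\asymp m$ and $\rho>R_0^\delta$. An elementary geometric computation produces a chain $R_1,\dots,R_k$ with $k\le 10$ such that: (a) each $R_j$ is the ``top'' or ``bottom'' constituent rectangle of an annulus $A(z_j,\rho,2\rho)$ with $z_j\in\Lambda_{10}\cap\delta\Z^2$, hence $c_\mu$‑crossable by the previous paragraph; (b) $R_j\subset\Lambda_{20m,5m}(z_0)$ for all $j$; (c) $B_1$ is contained in the left box of $R_1$ and the right box of $R_k$ is contained in $B_2$; and (d) the right box of $R_j$ is contained in the left box of $R_{j+1}$. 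In the lucky case where $10m$ happens to be a dyadic multiple of $\delta$ one can take $k=1$ with $R_1=\Lambda_{20m,5m}(z_0)$ itself, which is precisely the top rectangle of the $c_\mu$‑crossable annulus $A(z_0-(0,15m),10m,20m)$; in general a short ``funnel'' of $O(1)$ extra crossings at the left end is needed because the left box of an aspect‑$4$ crossable rectangle of scale $\rho<10m$ may be a bit smaller than $B_1$, so one first covers $B_1$ by $O(1)$ candidate rectangles and then merges. The $\delta$‑rounding of the $z_j$ and the dyadic rounding of $\rho$ cost nothing since $\delta\ll\rho\asymp m$.

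Given such a chain, let $E_j$ be the event that the walk, run from its current position, enters the right box of $R_j$ before exiting $R_j$. On $E_1\cap\dots\cap E_{j-1}$ the current position lies in the left box of $R_j$, so $c_\mu$‑crossability of $R_j$ (which is a statement for \emph{every} starting point in its left box) together with the strong Markov property gives $\P(E_j\mid E_1,\dots,E_{j-1})\ge c_\mu$. Hence $\P(E_1\cap\dots\cap E_k)\ge c_\mu^k\ge c_\mu^{10}$, and on this event the walk travels from $B_1$ to $B_2$ without ever leaving $\bigcup_j R_j\subset\Lambda_{20m,5m}(z_0)$, which is exactly $c_\mu^{10}$‑crossability of $\Lambda_{20m,5m}(z_0)$. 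A $\pi/2$‑rotated rectangle is handled in the same way, now using the ``left'' and ``right'' (tall, aspect‑$4$) constituent rectangles of the same annuli; no rotational invariance of $\mu$ is needed, since $R_{\max}^\delta\le R_0^\delta$ already controls all four constituent rectangles of every relevant annulus.

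The only real work is the bookkeeping of the second paragraph: fitting the chain inside $\Lambda_{20m,5m}(z_0)$ with the nesting ``right box of $R_j\subset$ left box of $R_{j+1}$'' while using only annuli centred at $\delta\Z^2$ points and at dyadic scales exceeding $R_0^\delta$, and keeping the number of links bounded (the exponent $10$ is not optimised). I expect no conceptual obstacle; one small point worth stressing is that the confinement of the walk to $\Lambda_{20m,5m}(z_0)$ on the good event is automatic precisely because each $E_j$ already forces the walk to stay inside the sub‑rectangle $R_j\subset\Lambda_{20m,5m}(z_0)$, so no separate ``no‑escape'' argument is needed. Everything else is a routine application of \cref{def:RSW} and the Markov property.
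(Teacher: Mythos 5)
Your proposal is correct and takes essentially the same route as the paper: decompose the target rectangle into a chain of at most $10$ constituent rectangles of annuli $A(z,\rho,2\rho)$ with $z\in\Lambda_{10}\cap\delta\Z^2$ and dyadic $\rho\asymp m>R_0^\delta$, each of which is $c_\mu$-crossable because $R_{\max}^\delta\le R_0^\delta$, and then chain via the strong Markov property. The paper's own proof is a one-line sketch of exactly this argument; you have merely spelled out the dyadic/lattice rounding and the ``funnel'' issue at the ends, which the paper glosses over.
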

\begin{proof}
This is a standard consequence of RSW theory. Indeed, any rectangle of the form specified by the lemma can be covered by at most 10 many $2m \times 4m$ rectangles which is one of the rectangles of some $A(z, 2m, 4m)$ with $z \in \delta \Z^2 \cap \Lambda_{10}$ (recall that $D \subset \Lambda_1$). The rest follows by applying the Markov property of the random walk and using the fact that $m \ge R_0 \ge R^\delta_{\max}$. 
\end{proof}
One standard application of \cref{lem:RSW} is a Beurling type hitting estimate for a random walk:
\begin{lemma}\label{lem:Beurling}
There exists $c,c'$ such that for all $\ve>0$ and for all $\delta \le \ve$, the following holds.
Let $K \subset D$ be a connected set. Fix a $G \sim \mu$ such that $R^\delta_{\max} \le R_0^\delta(\ve)$ and let $\mathbf P^G_v$ be the law of a simple random walk $X$ in $G$ started from $v$. Let $d(v, \partial D)$ (resp. $d(v,K)$) be the Euclidean distance between $v$ and $\partial D$ (resp. $K$). Then
\begin{equation*}
\mathbf P^G_v(X \text{ exits $\Lambda_{d(v, \partial D)}(v)$ before hitting $K$}) \le c \left(\frac{d(v,K) \vee R_0^\delta(\ve)}{d(v,  \partial D)}\right)^{c'} 
\end{equation*}
\end{lemma}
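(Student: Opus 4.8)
The plan is to run the standard iterative "annulus crossing" argument that converts uniform RSW into a Beurling estimate, with the only modification being that we stop the iteration once the scale drops below $R_0^\delta(\ve)$. First I would set up the dyadic annuli around $v$: let $\rho := d(v,\partial D)$ and $\eta := d(v,K) \vee R_0^\delta(\ve)$, and consider the annuli $A(v, 2^i \eta, 2^{i+1}\eta)$ for $i = 0, 1, \dots, N$ where $N := \lfloor \log_2(\rho/\eta) \rfloor - O(1)$, so that the largest annulus still sits inside $\Lambda_\rho(v) \subset D$. Since every scale $2^i\eta \ge R_0^\delta \ge R_{\max}^\delta$, Lemma~\ref{lem:RSW} applies (after checking that each $A(v,2^i\eta,2^{i+1}\eta)$ decomposes into the required translated/rotated rectangles, which one arranges by taking the annulus slightly thinner or fatter as in the definition preceding Lemma~\ref{lem:hole}): in each such annulus the walk $X$ has probability at least $c_\mu^{4}$ (or $c_\mu^{10}$, depending on which packing one uses) of making a full loop around the inner square before exiting the outer one.

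The key observation is topological: if the walk, started from $v$ (which lies inside the innermost square $\Lambda_\eta(v)$), completes a full circuit inside some annulus $A(v, 2^i\eta, 2^{i+1}\eta)$, then since $K$ is connected and — on the event in question — $K$ has not yet been exited, $K$ must intersect $\Lambda_\eta(v)$ wait, more carefully: $K$ meets the ball $\Lambda_{d(v,K)}(v)$ by definition of $d(v,K)$, hence meets the innermost square; a loop of the walk around that square separates $v$ from $\partial\Lambda_\rho(v)$, so the walk cannot reach $\partial\Lambda_\rho(v)$ without first crossing that loop, and in crossing back inward it is forced to cross (hence hit) $K$. Therefore, on the event that $X$ exits $\Lambda_\rho(v)$ before hitting $K$, the walk must \emph{fail} to make a full loop in \emph{every one} of the $N$ disjoint annuli. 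By the strong Markov property applied successively at the exit times of the outer boundaries of $A(v,2^i\eta,2^{i+1}\eta)$, the probability of failing in all of them is at most $(1-c_\mu^{4})^N$.

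It then remains to convert $(1-c_\mu^4)^N$ into the stated power bound. Write $(1-c_\mu^4)^N = \exp(-N\log\tfrac{1}{1-c_\mu^4})$ and substitute $N \ge \log_2(\rho/\eta) - C_0$ for an absolute constant $C_0$; this gives $(1-c_\mu^4)^N \le c\,(\eta/\rho)^{c'}$ with $c' = \log_2\tfrac{1}{1-c_\mu^4}$ and $c = (1-c_\mu^4)^{-C_0}$, which is exactly $c\big((d(v,K)\vee R_0^\delta(\ve))/d(v,\partial D)\big)^{c'}$. A minor edge case: if $\eta \ge \rho$ (i.e. $K$ is already far on the scale of $d(v,\partial D)$, or $R_0^\delta(\ve)$ itself exceeds $d(v,\partial D)$) then $N \le 0$, there are no annuli, and the bound holds trivially since the right-hand side is $\ge c \ge 1$ for suitable $c$; so one may assume $\eta < \rho$.

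The main obstacle — really the only non-bookkeeping point — is making the geometric/topological step fully rigorous: one must be careful that the walk truly cannot escape to $\partial\Lambda_\rho(v)$ without hitting $K$ once a surrounding loop has been formed, which uses planarity of $G$ together with the fact that $K$ reaches into the innermost square, and one must handle the discrepancy between Euclidean squares $\Lambda_{d(v,\partial D)}(v)$ and the graph distances implicit in "crossable" (this is exactly the kind of Euclidean-vs-graph bookkeeping already absorbed into Lemma~\ref{lem:RSW} via the $\Ce$ constants, so it should transfer). Everything else is the routine Markov-chaining and logarithm manipulation sketched above, and is essentially identical to the corresponding step in \cite{BLR16}.
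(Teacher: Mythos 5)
Your approach is the same one the paper gestures at: the standard Beurling-from-RSW annulus-chaining argument, with the only modification (which is also the only point the paper bothers to make) being that the iteration bottoms out at scale $R_0^\delta(\ve)$ because below that one can no longer invoke Lemma~\ref{lem:RSW}. The dyadic annuli, the $c_\mu^{4}$ (or $c_\mu^{10}$) loop probability from Lemma~\ref{lem:RSW}, the Markov chaining giving $(1-c_\mu^4)^N$, and the logarithm-to-power conversion are all exactly the standard steps the paper is referring to when it says ``this is standard once we have Lemma~\ref{lem:RSW}.''

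One spot does need tightening, though it does not affect the validity of the approach. Your topological step is phrased as: the loop separates $v$ from $\partial\Lambda_\rho(v)$, so ``in crossing back inward it is forced to cross (hence hit) $K$.'' That is not the right mechanism --- the walk crossing its \emph{own} loop tells you nothing about $K$. The correct statement is that the \emph{loop itself} must intersect $K$: the loop is a closed curve surrounding $\Lambda_{2^i\eta}(v)$, $K$ is connected, $K$ meets $\Lambda_\eta(v)\subset\Lambda_{2^i\eta}(v)$ (inside the loop), and $K$ also reaches outside the loop; by the Jordan curve theorem a connected set meeting both the interior and exterior of a closed curve must meet the curve, so the walk already hit $K$ while tracing the loop. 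Note this argument uses that $K$ extends well beyond the annulus in question --- in the paper's usage $K$ always contains $\partial D^\delta$, so this holds, but it is worth making explicit that the lemma is really about such ``macroscopic'' $K$ (a hypothesis the paper leaves implicit as well). With that correction your proof is the paper's intended proof.
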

\begin{proof}
This is standard once we have \cref{lem:RSW}, so we skip the proof. We point out that the term $R_0^\delta(\ve)$ in the numerator appears because we can apply RSW once the scale is larger than $cR_0^\delta$ since $R_{\max}^\delta \le R_0^\delta$ by the choice of $G$.
\end{proof}

Armed with this estimate, the rest of the proof of \cref{thm:coupling} follows the same line of argument as in \cite{BLR16}. We provide a sketch of the argument pointing out the crucial differences. We now describe the \emph{good algorithm} from \cite[Lemma 4.18]{BLR16}. Let $z \in D$ and suppose $r>0$ is small enough so that $\Lambda_{2r}(z) \subset D$. Fix a $G \sim \mu$ and we now describe a way of sampling the branches of $\cT^\delta$ from the vertices of $\Lambda_r^\delta(z)$. Let $\cQ_j$ be the collection of vertices of $G$ which are furthest from $z$ in each cell of $\Lambda^\delta_{(1+2^{-j})r} (z) \cap r6^{-j}\Z^2$, from which a branch is not sampled before (if there is no such vertex, we ignore that cell). At each step $j$, we sample from $\cQ_j$ in any order. This results in a tree $\cT^\delta_j$ which is the union of all the branches sampled in steps 1 up to $j$. We continue until we exhaust all the vertices in $\Lambda^\delta_r$.
\begin{lemma}\label{lem:schramm_finiteness}
Fix $D,z,r$ as above. For all $\ve, \ve'>0$, there exists a $j_0 = j_0(\ve')$ such that for all $\delta  \le \delta_0(r, \ve') \wedge \ve$  the following holds.
Fix a $G$ such that $R^\delta_{\max} \le R_0^\delta(\ve)$ where $R_0^\delta(\ve)$ is as in \eqref{eq:R0}  and $|\Lambda_{10} \cap \delta G| \le C_\mu100/\delta^2$ (i.e., (Vol) is satisfied). Then with probability at least $1-\ve'$
\begin{enumerate}[{(}i{)}]
\item The random walks emanating from all branches in $\cQ_j$ for $j \ge j_0$ stay in the square $\Lambda_{2r}(z)$.
\item All the branches sampled from vertices in $\cQ_j $ for $j >j_0$ until they hit $\cT_{j_0} \cup \partial D^\delta$ have Euclidean diameter at most $\ve' r$.
\end{enumerate}
\end{lemma}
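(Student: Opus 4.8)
The plan is to mimic the argument of \cite[Lemma 4.18]{BLR16}, with the Beurling-type estimate of \cref{lem:Beurling} playing the role that the deterministic RSW estimate played there; the only genuinely new bookkeeping is that all crossing statements are now valid only above the scale $R_0^\delta(\ve)$, so we must choose $j_0$ so that the cells at level $j_0$ are already comfortably above this threshold, and $\delta_0$ small enough that $R_0^\delta(\ve) \ll \ve' r$.

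First, for part (i), consider a branch emanating from a vertex $v \in \cQ_j$ with $j \ge j_0$; such a vertex lies in $\Lambda_{(1+2^{-j})r}^\delta(z)$, hence at Euclidean distance at least $r - 2^{-j}r \ge r/2$ from $\partial \Lambda_{2r}(z)$ (using $r$ small enough that $\Lambda_{2r}(z) \subset D$). The loop-erased random walk used by Wilson's algorithm is a stopped simple random walk, so it suffices to bound the probability that a simple random walk started from $v$ exits $\Lambda_{2r}(z)$ before hitting $\cT_{j_0} \cup \partial D^\delta$; but the tree $\cT_{j_0}$ already contains a branch through every cell of the coarse grid $r6^{-j_0}\Z^2$ inside $\Lambda_{(1+2^{-j_0})r}(z)$, so there is a connected subset of $\cT_{j_0}$ passing within Euclidean distance $O(r6^{-j_0})$ of $v$. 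Applying \cref{lem:Beurling} with $K$ a connected piece of $\cT_{j_0}$ near $v$, $d(v,\partial D)$ replaced by $r/2$, and $d(v,K) = O(r 6^{-j_0})$, the exit probability is at most $c\,( (r6^{-j_0} \vee R_0^\delta(\ve))/r )^{c'}$, which is $\le c\,6^{-j_0 c'}$ once $\delta_0$ is small enough that $R_0^\delta(\ve) \le r 6^{-j_0}$. Summing over all $j \ge j_0$ and over all vertices in all the relevant cells (there are at most $O(\delta^{-2})$ of them by (Vol), but more efficiently $O(6^{2j})$ cells at level $j$, each contributing one vertex to $\cQ_j$, times the possibility of many vertices per cell which we bound crudely using (Vol)) gives a bound of the form $\sum_{j\ge j_0} C 6^{2j} \cdot 6^{-j_0 c'}$; this is not yet summable, so the correct accounting — exactly as in \cite{BLR16} — is to note that a branch only needs to travel from scale $r2^{-j}$ near $v$ to the existing tree $\cT_{j_0}$, a distance of order $r2^{-j_0}$ at worst but order $r6^{-j}$ typically, and the union bound over the $O(6^{2j})$ cells at level $j$ is beaten by a Beurling exponent applied at the scale separation $6^{j-j_0}$, yielding $\sum_{j \ge j_0} C 6^{2j} (6^{-(j-j_0)})^{c'}$; choosing $c'$ effectively large by iterating the Beurling estimate (which only costs a constant in $c$) makes this a convergent geometric series $\le \ve'$ for $j_0 = j_0(\ve')$ large.

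For part (ii), the same Beurling input controls the Euclidean diameter: a branch started at $v \in \cQ_j$, $j > j_0$, run until it hits $\cT_{j_0} \cup \partial D^\delta$, must cross every dyadic annulus $A(v, 2^i\delta, 2^{i+1}\delta)$ centred at $v$ that separates $v$ from $\cT_{j_0}$; by \cref{lem:RSW} each such annulus above scale $R_0^\delta$ is crossed (i.e. the walk fails to escape it) only with a probability bounded away from $1$, so the probability that the branch reaches Euclidean distance $\ve' r$ from $v$ before hitting $\cT_{j_0}$ decays like a power of $(\ve' r / (r 2^{-j} \vee R_0^\delta(\ve)))$; since $r 2^{-j} \le r 2^{-j_0}$ and we are free to take $j_0$ large and then $\delta_0$ small so that $r2^{-j_0} \le \ve' r$ and $R_0^\delta(\ve) \le \ve' r$, a union bound over the at most $O(\delta^{-2})$ starting vertices — crude but affordable because the per-branch bound is stretched-exponentially or polynomially small with a gap we can widen — gives probability at least $1 - \ve'$ that every such branch has diameter at most $\ve' r$. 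Combining (i) and (ii) and relabelling $\ve'$ by $\ve'/2$ finishes the proof.

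The main obstacle is the interplay of the two small parameters with the random scale $R_0^\delta(\ve)$: one has to fix $j_0$ depending only on $\ve'$ (so that the geometric/Beurling sums over levels $j \ge j_0$ are $\le \ve'$, using only the RSW exponent $c'$ which is intrinsic to $\mu$), and only then shrink $\delta_0$ — depending on both $r$ and $\ve'$ — so small that $R_0^\delta(\ve) \le r 2^{-j_0} \wedge \ve' r$, which is possible precisely because $R_0^\delta(\ve) = \delta[\ln(C/(\ve\delta^2))]^{1/\alpha} \to 0$ as $\delta \to 0$ for $\ve \ge \delta$ (as remarked after \cref{lem:hole}). Everything else is a transcription of the argument in \cite[Lemma 4.18]{BLR16}, with \cref{lem:Beurling,lem:RSW} substituting for the uniform-crossing hypotheses used there.
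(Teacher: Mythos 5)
Your proposal has the right shape (Beurling from \cref{lem:Beurling}, tune $j_0$ first, then shrink $\delta_0$ so that $R_0^\delta(\ve)$ is below the $j_0$-th scale), and you correctly identify the interplay of scales as the crux. But two of the quantitative steps, as written, do not close, and the gap is exactly where the paper's proof introduces its one non-obvious trick.

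First, in part (i), your union bound $\sum_{j\ge j_0} C\,6^{2j}\bigl(6^{-(j-j_0)}\bigr)^{c'}$ does not tend to zero as $j_0\to\infty$: substituting $k=j-j_0$ it equals $C\,6^{2j_0}\sum_{k\ge 0}6^{k(2-c')}$, which for $c'>2$ is $\asymp 6^{2j_0}$ and \emph{grows} with $j_0$. More fundamentally, ``making $c'$ effectively large by iterating Beurling a constant number of times'' produces only a fixed polynomial decay $6^{-jC}$, and no fixed polynomial exponent can be tuned small \emph{afterwards} via $j_0$. The paper instead iterates the Beurling estimate a number of times that \emph{grows with $j$}, namely $\approx j^2$ applications at scale $6^{-j}r$, yielding a per-branch bound $(1/2)^{j^2/C_0}$. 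That super-exponential decay beats the entropy $6^{2j}$ for all $j$ beyond an absolute constant, and the tail $\sum_{j\ge j_0}6^{2j}(1/2)^{j^2/C_0}$ genuinely tends to $0$ as $j_0\to\infty$, which is what fixes $j_0=j_0(\ve')$.

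Second, for the scales $j\ge j_{\max}:=\log_6(4r/R_0^\delta)$ where the cells are below the crossing threshold $R_0^\delta$, your per-branch Beurling bound is of order $(R_0^\delta/r)^{c'}\approx\delta^{c'}$ up to logarithms, and you propose a union bound over the $O(\delta^{-2})$ vertices guaranteed by (Vol). That only works if $c'>2$, which is not provided by the RSW hypothesis. The observation you are missing — and it is the main point where this lemma departs from \cite[Lemma 4.18]{BLR16} — is that $j_{\max}\sim\log(1/\delta)$, so applying Beurling $j_{\max}^2\sim(\log(1/\delta))^2$ times at scale $R_0^\delta$ gives a per-branch bound $(1/2)^{j_{\max}^2/C_0}$ which is smaller than \emph{any} power of $\delta$; this does beat $\delta^{-2}$ once $\delta\le\delta_0(r,\ve')$. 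Your closing remark that $R_0^\delta(\ve)\to 0$ is true but not the right quantity: what matters is that $\log_6(r/R_0^\delta)$ diverges like $\log(1/\delta)$, not merely that $R_0^\delta$ is small. Once you put in the growing number of Beurling iterations (both in $j$ across the intermediate scales and in $j_{\max}$ at the finest scales), the rest of your outline matches the paper's argument.
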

\begin{proof}
The proof follows an argument similar to \cite[Lemma 4.18]{BLR16}, so we provide a sketch. Fix $j_0 = j_0(\ve')$ which will be fine tuned later. Let
$$
j_{\max} = \log_6\left(4r/R_0^\delta\right).
$$
where $R_0^\delta$ is as in \eqref{eq:R0}.
First, notice that the distance between a vertex in $\cQ_{j-1}$ and another in $\cQ_j$ is at most $4 \cdot 6^{-j} r$. For small $j<j_{\max}$, notice that $4 \cdot 6^{-j}r \ge R_0^\delta$. Thus using the Beurling type estimate of \cref{lem:Beurling}, the probability that the random walk started from a vertex in $\cQ_j$ reaches distance $C_06^{-j}r$ without hitting a sampled branch is at most $1/2$ for a large enough choice of $C_0$. Applying this bound $j^2$ times, and using Markovian property of the walk, the probability that the walker reaches distance $j^26^{-j } r$ from a point $w \in \cQ_j$ without hitting any sampled branch is at most $(1/2)^{j^2/C_0}$. Let $\cD(w,j)$ be the event described above.
Applying this bound over all branches for $j_0 \le j \le j_{\max}$, we see that 
\begin{equation}
\P(\cup_{w \in \cQ_j} \cup_{j_0 \le j \le j_{\max}} \cD(w,j)) \le \sum_{j \ge j_0} 6^{2j} (1/2)^{j^2} \le \ve'/2, \label{eq:E1}
\end{equation}
for a large enough choice of $r$.

On the other hand, for $j \ge j_{\max}$, the Beurling bound kicks in only at distance $4\cdot 6^{-j_{\max}}$. Thus applying this crude bound, the probability that any $w \in \cQ_j$ reaches distance larger than $j^2 6^{-j}r$ without hitting any 	branch in $\cT_j$ is at most $(1/2)^{j_{\max}^2/C_0}$. Call this event $\tilde \cD(w,j)$. Applying the crude bound that the total number of vertices in $\Lambda_{10}$ is at most $100C_\mu/\delta^2$, we see that 

\begin{equation}
\P(\cup_{j \ge j_{\max}}\cup_{w \in \cD_j} \tilde D(w,j)) \le 100C_\mu\delta^{-2}(1/2)^{j_{\max}^2/C_0}  \le \ve'/2 \label{eq:E2}
\end{equation}
for a small enough choice of $\delta = \delta ( \ve',r) \wedge \ve$ (this is the part where the proof differs from that in \cite{BLR16}). Indeed, we can write $j_{\max}$ as
\begin{equation*}
j_{\max}  = \log_6 \left(\frac{4r}{\delta}\right) - \log_6\Big(\log^{1/\alpha} \Big(\frac{C}{\ve \delta^2}\Big)\Big) \ge \log_6 \left(\frac{4r}{\delta}\right) - \log_6\Big(\log^{1/\alpha} \Big(\frac{C}{ \delta^3}\Big)\Big).
\end{equation*}
since $\ve \ge \delta$. In the above expression, $j_{\max} $ is much larger than $-C \log(\delta)$ for some constant $C$  and $\delta$ small enough (depending only on $r$), and thus $(1/2)^{j_{\max}^2}$ is much smaller than any polynomial in $\delta^{-1}$.

Now it is easy to see that on the complement of \eqref{eq:E1} and \eqref{eq:E2}, the diameter of the branches sampled after step $j_0$ is at most
$$
\sum_{j \ge j_0}  j^2 6^{-j}r.
$$
For the same choice of $j_0$, the above quantity is less than $\ve' r$. Thus both items (i) and (ii) are satisfied on the complement of the event, thereby completing the proof.
\end{proof}
\subsection{The coupling.} The coupling of \cref{thm:coupling} is as described in Section 4.4 of \cite{BLR16} and proceeds in two stages. First we couple around a single point. Then if it fails, we iterate until we succeed. The important difference from \cite{BLR16} is that we have to pick a `good' sample from $\mu$ first. Fix $\ve>0$ and pick a graph $G \in \cG$ where $\cG$ is the collection of graphs satisfying satisfying $ R_{\max}^\delta \le R_0$ where $R_0$ is as in \eqref{eq:R0} and $|\Lambda_{10} \cap \delta G| \le C_\mu100/\delta^2$. Applying \cref{lem:hole} and assumption (Vol), we obtain
\begin{equation}
\mu(\cG) \ge 1-\ve-\exp\left(-\frac{\beta}{\delta^2}\right) \label{eq:muG}
\end{equation}

We assume thoughout that we have picked a graph $G \in \cG$ in the rest of the description of the coupling.
\paragraph{Base coupling.} Pick $z\in D$. We will now describe a coupling between a wired UST $\cT$ in $D$ and another wired UST $\tilde \cT$  of $\Lambda_{10}^\delta$ using the following steps, which we call the base coupling. It will be described with respect to a scale $r$ satisfying $\Lambda_{2r}(z) \subset D$. Given a vertex $w$, let $\gamma(w)$ (resp. $\tilde \gamma(w)$) denote the wired UST branch of $\cT$ (resp. $\tilde \cT$) sampled via Wilson's algorithm 
\begin{itemize}
\item Take $w_1 \in A(z, 0.8r, 0.9 r)$ and sample $\gamma(w_1)$ and $\tilde \gamma(w_1)$ independently until they both hit the boundary of their respective domains. Let $E_1$ be the event that both $\gamma(w_1)$ and $\tilde{\gamma}(w_1)$ stay outside $\Lambda_{0.7r}(v)$.
\item Conditional on the event $E_1$ holding, we couple the loop-erased random walk emanating from a vertex  $w_2 \in A(v, 0.3r, 0.4r)$ as follows. We sample a loop-erased random walk until hitting either  $\gamma(w_1)\cup \partial D^{\delta}$ or $\tilde{\gamma}(w_1) \cup \partial \Lambda_{10}^{\delta}$. Without loss of generality, we assume that the walk  intersects $\gamma(w_1)\cup \partial D^{\delta}$ at time $t_1$. Then we continue the random walk from that point until it intersects $\tilde{\gamma}(w_1) \cup \partial \Lambda_{10}^{\delta}$ at time $t_2$ and its path is denoted by $\tilde{\gamma}(w_2)$. Let $E_2$ be the event that $\gamma(w_2)$ and $\tilde{\gamma}(w_2)$ agree in $\Lambda_{0.6r}(v)$.
\item 
Suppose that events $E_1$ and $E_2$ hold. Fix a $j_0=j_0(1/2)$ as defined in \cref{lem:schramm_finiteness}. As the description of good algorithm above, let $\mathcal{Q}_j$ be a set of vertices in $\{0.1r6^{-j}\Z^2\}_{j\geq 0}\cap \Lambda_{0.1r}(v)$ which are chosen that each one is furthest away from $v$ within the small square. Define the event $E_3$ to be the branches emanating from all the vertices in $\cup_{j\leq j_0}\mathcal{Q}_j$ of $\mathcal{T}^{\delta}$ and $\widetilde{\mathcal{T}}^{\delta}$ agree in $\Lambda_{0.5r}(v)$.

\item Assume that events $E_1$, $E_2$ and $E_3$ hold. Let $E_4$ be the event that the remaining branches starting from vertices in $\cup_{j> j_0}\mathcal{Q}_j$ of $\mathcal{T}^{\delta}$ and $\widetilde{\mathcal{T}}^{\delta}$ agree in $\Lambda_{0.1r}(v)$.
\end{itemize}
We will show below that the base coupling succeeds with a uniformly positive probability.
\paragraph{Iteration of the base coupling.} We now want to iterate the above base coupling, decreasing the scale at every step and in the end want to conclude that the coupling succeeds after geometric many tries. Also, we want to conclude that after geometric many tries, there is enough space around $z$ on which the spanning trees are coupled. 

We say a $z$ has \emph{isolation radius} $6^{-k}$ at scale $r$ at any step in the above base coupling if $\Lambda_{6^{-k}r}(z)$ does not intersect any sampled branches and $k$ is the minimal such integer. Fix a large constant $C_0$. We start the iteration with an attempt at the base coupling with scale $r$. If it is successful, we say the iteration is complete and the coupling is successful. If not, let $I_1$ be the isolation radius at scale $r$. If $6^{-I_1}r <C_0R_0^\delta$, we abort the coupling and say that the coupling failed. Otherwise, if the base coupling was not successful and $6^{-I_1}r \ge C_0R_0^\delta$ we attempt another base coupling at scale $6^{-I_1}r/2$ in the domain $\cT \setminus \cT_0^\d$ where $\cT_0^\d$ is the tree sampled in $D^\delta$ in the first step (and ignore the tree sampled for $\Lambda_{10}^\delta$). Iterating this process, we obtain the isolation radii at any step $j$ (if we have not aborted) is $\sum_{1 \le k \le j}I_k$ at scale $r$.

Let $N$ be the smallest $j$ such that we either abort the coupling or the coupling succeeds. Let $I_z = I_1+\ldots+I_N$.

\paragraph{Full coupling.} Pick distinct points $z_1, \ldots, z_k$ in $D$. Let $r>0$ be such that $\Lambda_{2r}(z_i) \subset D$ for all $i$ and $|z_i - z_j| >2r$ for all $i \neq j$. Assume that $\delta$ is small enough so that $0.01r > R_0$ where $R_0$ is as in $\eqref{eq:R0}$. Since we picked a $G$ with $R_{\max} \le R_0$, uniform crossing is possible in $A(z_i, r, 1.1 r)$ for all $1 \le i \le k$. Consequently, there must exist a circuit lying completely in $A(z_i, r, 1.1 r)$. We first perform Wilson's algorithm from all the vertices from these circuits in $A(z_i, r, 1.1 r)$. Let $J_{z_i}$ be the isolation radius at scale $r$ seen from $z_i$ for each $ 1 \le i \le k$. Now perform iteration of base coupling around each vertex independently, starting from scale $6^{-J_{z_i}}r/2$. Let $I_{z_i}$ be the isolation radius as defined in the iteration of the base coupling part of the description. Let $$I = \max \{J_{z_i} + I_{z_i} : 1 \le i \le k\}.$$

Let $\mathbf P^G$ be this coupling, and this will be the coupling used to prove \cref{thm:coupling}.

\begin{prop}\label{prop:tail}
Let $G \in \cG$ as in \eqref{eq:muG}, and let $I, \mathbf P^G$ be as above. Then
there exist constants $c,c'>0$ such that for all $i >0$, and $\delta $ small enough,
$$
\mathbf P^G(I>i) \le ce^{-c'i} + \delta^{c'}.
$$
\end{prop}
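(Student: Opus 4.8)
The plan is to show that the iteration of the base coupling terminates after a geometric number of steps by decomposing the tail event $\{I>i\}$ into two pieces: either the base coupling failed to succeed within the first $i$ attempts (while never aborting), or the sum of the isolation radii exceeded $i$. The key facts to establish are: (a) each attempt at the base coupling succeeds with a probability bounded below by a universal constant $p_0 = p_0(c_\mu, \alpha, C_\mu, \beta) > 0$, conditionally on the past and on the fact that we have not yet aborted; and (b) each isolation radius increment $I_k$ has an exponential tail that is uniform in the scale, again conditionally on the past. Given (a) and (b), a standard stochastic-domination argument by a sum of i.i.d.\ geometric random variables yields the bound $\mathbf P^G(I>i) \le ce^{-c'i}$, up to the additive $\delta^{c'}$ error coming from the events in \cref{lem:schramm_finiteness} that we must avoid.

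First I would handle the success probability of a single base coupling. The events $E_1, E_2, E_3, E_4$ from the description must each be shown to hold with uniformly positive conditional probability. For $E_1$, this is an RSW/crossing estimate: since $6^{-I_k}r \ge C_0 R_0^\delta$ at every non-aborted step, \cref{lem:RSW} applies at all relevant scales, so a random walk from $w_1 \in A(z, 0.8r, 0.9r)$ exits the appropriate annulus before entering $\Lambda_{0.7r}$ with probability bounded below — this is exactly the kind of estimate that follows by concatenating crossings of the rectangles making up an annulus. For $E_2$, one uses the Beurling-type estimate \cref{lem:Beurling} to control where the loop-erased walk from $w_2$ first hits the earlier branch, together with the fact that the coupled walk stays in the right region with positive probability; the key point (as in \cite{BLR16}) is that the first-hitting location on $\gamma(w_1)$ lies outside $\Lambda_{0.6r}$ with positive probability, so the two branches agree inside $\Lambda_{0.6r}$. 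For $E_3$ and $E_4$, one invokes \cref{lem:schramm_finiteness} with $\ve' = 1/2$ (choosing $j_0 = j_0(1/2)$): on the high-probability event there, the branches sampled after step $j_0$ have small diameter and stay in $\Lambda_{2r}(z)$, which forces agreement in the smaller squares. Multiplying these positive conditional probabilities gives (a), valid at every scale $\ge C_0 R_0^\delta$; the $\delta^{c'}$ term enters here because \cref{lem:schramm_finiteness} itself only holds off an event of probability $\ve'$, and summing the failures over the $O(\log(r/R_0^\delta))$ scales together with the bound $R_0^\delta \gtrsim \delta$ produces a polynomially small (in $\delta$) additive error.

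Next I would bound the isolation radius increments. At any step at scale $r'$, after performing Wilson's algorithm from the prescribed circuit vertices, the event that $\Lambda_{6^{-k}r'}(z)$ is hit by a sampled branch is controlled by another Beurling estimate: a branch that comes within distance $6^{-k}r'$ of $z$ must traverse an annulus around $z$, and the probability of this decays geometrically in $k$ (uniformly in $r'$, again because we are at scales above $C_0 R_0^\delta$ where RSW holds). This gives $\mathbf P^G(I_k > j \mid \text{past}) \le C e^{-c'j}$ uniformly, hence the cumulative isolation radius $I_1 + \dots + I_N$ is dominated, on the event that the coupling runs for $N$ steps, by a sum of $N$ i.i.d.\ random variables with exponential tails. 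Combining with the geometric tail of $N$ from part (a) — i.e.\ $\mathbf P^G(N > i) \le (1-p_0)^i + (\text{abort probability}) + \delta^{c'}$, where the abort can only happen once the scale drops below $C_0 R_0^\delta$, an event which for the full coupling also contributes at most the stated error — a routine convolution/Chernoff argument gives $\mathbf P^G(I > i) \le c e^{-c'i} + \delta^{c'}$, and the same for the maximum over the $k$ points $z_1, \dots, z_k$ since $k$ is fixed.

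The main obstacle I anticipate is bookkeeping the conditioning correctly: each base coupling attempt is performed in a \emph{random} remaining domain $\cT \setminus \cT_0^\d$ that depends on all previous attempts, so one must verify that the RSW, Beurling, and Schramm-finiteness inputs all apply \emph{uniformly} in this random domain — this works because all these inputs depend only on $G$ (through $R_{\max}^\delta \le R_0^\delta$ and the volume bound, which we fixed at the outset by choosing $G \in \cG$) and on the scale being $\ge C_0 R_0^\delta$, not on the particular domain. A second delicate point is tracking the $\delta^{c'}$ error: one must make sure that the scales at which \cref{lem:schramm_finiteness} is invoked are all $\gtrsim \delta$ (so that the number of such scales is $O(\log(1/\delta))$ and each failure probability can be taken polynomially small in $\delta$ with room to spare), and that the constant $c'$ in the final statement can be chosen small enough to absorb all these contributions. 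These are exactly the places where the argument departs from \cite{BLR16}, where the analogous errors were absent because RSW held down to scale $\delta_0$ deterministically.
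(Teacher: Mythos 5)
Your proposal is correct and takes essentially the same approach as the paper, whose proof simply defers to \cite[Lemmas 4.19, 4.20 and Theorem 4.21]{BLR16} after replacing the fixed RSW cutoff scale $\delta_0$ (order $1$ there) by the $\delta$-dependent scale $R_0^\delta/\delta$, of order $\log(\delta^{-1})$. Your reconstruction of what is being cited — uniform positive conditional success probability of each base coupling via RSW/Beurling/\cref{lem:schramm_finiteness}, exponential tails of the isolation radius increments, and the abort below scale $C_0 R_0^\delta$ producing the additive $\delta^{c'}$ term — is faithful; the only minor imprecision is that the $\delta^{c'}$ error comes cleanly from the probability of the abort event (the exponential tail of $\sum_k I_k$ at the threshold $\log_6(r/(C_0 R_0^\delta))\approx\log_6(1/(\delta\log\delta^{-1}))$) rather than from summing Schramm-finiteness failures over scales, since the latter use the fixed $\ve'=1/2$ and are absorbed into the per-step constant failure probability.
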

\begin{proof}
The proof of this statement follows the exact same lines as \cite[Lemmas 4.19, 4.20 and Theorem 4.21]{BLR16}, replacing the scale $\delta_0$ which was order $1$ there by $R_0/\delta$ which is of order $\log(\delta^{-1})$. This allows us to upper bound $\mathbf P^G(I>i) $ by $ce^{-c'i} + (\delta \log(\delta^{-1}))^{c''}$ which can be easily bounded by $ce^{-c'i} + \delta^{c'}$ for a smaller choice of $c'$.
\end{proof}
\begin{proof}[Proof of \cref{thm:coupling}]
 Choose $\delta_0(\ve')$ small enough so that for all $\delta< \delta_0$, $(\delta \log(\delta^{-1}))^{c''} \le \ve'$ where the expression is as in \cref{prop:tail}. Pick an even smaller $\delta \le \ve$ if needed. Thus by \eqref{eq:muG}, we can pick a $\cG$ with $\mu(\cG) \ge 1-\ve - \exp(-\beta / \ve^2) \ge 1-2\ve$. For such a $G \in \cG$, the choice of $\delta$, we perform the full coupling described above. By \cref{prop:tail}, if we rename $R = 6^{-I} r$, then 
 $$
 \mathbf P^G(R \le \ve' r) \le  \ve' + (\ve')^c \le (\ve')^{c''}.
 $$
 for some small enough $c''$. This completes the proof.
\end{proof}

\section{Dimer model in random graph}\label{sec:dimer}
In this section we outline a scaling limit result for the height function of the dimer model on a class of random graphs with law $\mu$. Apart from the three properties outlined in \cref{sec:UST}, we add the additional fourth assumption of quenched Invariance principle:

\begin{enumerate}
\item[\textit{(iv)}] For $\mu$-almost sure $\Gamma$, the following holds. As $\delta
  \to 0$, the continuous time random walk $\{X_t\}_{t \ge 0}$ on
  $G$
  started from a nearest vertex to $0$ converges to Brownian motion in the following sense:
  $$
 {\delta( \tilde X_{t/\delta^2})_{t \ge 0}} \xrightarrow[\delta \to 0 ]{(d) }
(B_{\phi(t)})_{0 \leq t}
$$
where $(B_t, t \ge 0)$ is a two dimensional standard Brownian motion  started from
 $0$. The convergence above is in law and using the uniform topology on curves up to parametrisation.
\end{enumerate}
\begin{remark}\label{rmk:quenched}
By \cite[Theorem 1.1]{berger2007quenched} and by \cite[Theorem 1.1]{rousselle15}, Quenched invariance principle holds for random walk on the infinite cluster for supercritical percolation on $\Z^2$ and for Poisson Voronoi triangulation in $\R^2$.
\end{remark}

Recall that in a bipartite graph, a \emph{dimer configuration} is perfect matching of the black and the white vertices (every black vertex is matched to exactly one white vertex). The dimer model is a uniform probability measure on all possible perfect matchings on it. Take $D \subset \C$ be a simply connected domain with a smooth boundary $\partial D$.
Let $\Gamma^\delta = \delta G \cap D$ and assume that the boundary vertices form a simple cycle which converges in the Hausdorff sense to $\partial D$ almost surely (crossing estimate ensures that such a cycle exists a.s. at least for all small enough $\delta$ along a subsequence which is at a Hausdorff distance $O(\delta^{1/2})$ from $\partial D$). Take $(\Gamma^\dagger)^\delta$ to be the dual graph. Now introduce white vertices at the points where the primal and the dual edges cross, and let $H^\delta$ be the final graph obtained. Let $\mu_{\dim}$ denote the uniform probabilty measure on $H^\delta$. We refer to \cite{KPWTemperley} or \cite{BLR_riemann1} for more details of this construction.

It is well-known that a dimer configuration is uniquely associated to a \textbf{height function} $h^\delta: F(H^\delta) \to \R$ where $F(H^\delta)$ is the collection of faces of $H^\delta$. Furthermore, the height function can be associated to the winding of the wired Uniform spanning tree on $G^\delta$ as follows (here the boundary cycle is wired). Fix a point $x$ on $D$ and let $x^\delta$ be the point closest to it. Take $z \in D$ and a point $z^\delta$ closest to $z$ in $D$. Let $\gamma^\delta(z)$ denote the path formed by UST branch started from $x$, hitting the boundary cycle and then moving along the boundary cycle to $x^\delta$. Then it is known (see \cite[Section 5]{KPWTemperley} or \cite{BLR_riemann2} for a more detailed treatment) that one can set things up so that the height function $h^\delta(z)$ is the amount of winding done by $\gamma^\delta(z)$. We refer to \cite[Section 2]{BLR16} for a precise definition of this winding (called `intrinsic winding' there). We now extend $h^\delta(z)$ in a natural way to all of $D$ (e.g. by considering Voronoi cells around the center of the faces of $H^\delta$), so that we can integrate $h^\delta$ with a smooth compactly supported test function in $D$.

With this setup, the arguments in \cite[Section 5]{BLR16} can be readily applied in a quenched sense. Indeed by \cref{a.s.,a.s.2}, if we choose $\ve = \delta_k = 2^{-k}$ then $\mu$-a.s. $R_{\max}^{\delta_k} \le R_0^{\delta_k}$  and \cref{thm:coupling} holds for any $\ve'$ if we choose $k$ large enough. Thus we can use the Invariance principle and the convergence of loop erased random walk to SLE$_2$ to control the macroscopic part of the winding (\cite{YY}) and \cref{thm:coupling} to control the microscopic part (basically the independence cancels out the cross terms in a joint moment calculation). We obtain the following theorem whose proof we omit in order to keep the exposition succinct and to avoid repitition.
\begin{thm}\label{thm:GFF}
Suppose $\mu$ satisfies the conditions of \cref{sec:UST} and the Quenched invariance principle (item (iv)) as above. Then there exists a collection $\cG_0$ with $\mu(\cG_0)=1$ such that for any $G \in \cG_0$ 
the following holds. For any compactly supported test function $\varphi $ in $D$, 
$$
\int_D \bar h^{\d_k}(z)\varphi(z)dz \xrightarrow[]{k \to \infty} \int_D \sqrt{2}h_D^{\mathsf{GFF}}(z) \varphi(z)dz.
$$
along the subsequence $\delta_k = 2^{-k}$ in law, where $\bar h^{\d} = h^{\d} - \mathbf{E}^G(h^{\d})$ and $h_D^{\mathsf{GFF}}$ is a Gaussian free field with Dirichlet boundary condition in $D$.
\end{thm}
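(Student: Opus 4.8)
The plan is to follow the strategy of \cite[Section 5]{BLR16} essentially verbatim, but interpreting every statement in the quenched sense: we first restrict to a full-measure set of graphs $\cG_0$ on which all the probabilistic inputs hold simultaneously, and then run the moment computation for the annealed-looking object $\int_D \bar h^{\delta_k}(z)\varphi(z)\,dz$ with the graph $G\in\cG_0$ fixed. To produce $\cG_0$, I would first combine \cref{a.s.} and \cref{a.s.2}: along $\delta_k=2^{-k}$ we have, $\mu$-a.s., $R_{\max}^{\delta_k}\le R_0^{\delta_k}$ for all large $k$, with $R_0^{\delta_k}\to 0$; intersecting with the a.s.\ event from the quenched invariance principle (item (iv), valid on a full-measure set by \cref{rmk:quenched}) and with the a.s.\ event $|\Lambda_{10}\cap\delta_k G|\le C_\mu 100/\delta_k^2$ eventually (a Borel--Cantelli consequence of (Vol)), we get a single set $\cG_0$ with $\mu(\cG_0)=1$ on which: RSW holds at all macroscopic scales for $k$ large (\cref{lem:RSW}), the Beurling estimate \cref{lem:Beurling} is available, the coupling $\mathbf P^G$ of \cref{thm:coupling} holds for every $\ve'$ once $k$ is large (using $\ve=\delta_k$ in the theorem), and the random walk from any fixed starting point converges to Brownian motion.

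With $G\in\cG_0$ fixed, the proof reduces to the standard two-ingredient moment argument. First, the \emph{convergence of the first moment structure and the macroscopic winding}: using item (iv) and the convergence of loop-erased random walk to $\mathrm{SLE}_2$ (quoting \cite{YY}), one shows that the winding of the UST branch $\gamma^{\delta_k}(z)$ decomposes into a macroscopic part, whose law converges to the appropriate continuum winding functional, plus a microscopic part localized near $z$ and near $\partial D$. The subtraction of $\mathbf E^G(h^{\delta_k})$ removes the divergent deterministic piece of the microscopic winding. Second, the \emph{decorrelation}: to compute the limiting joint moments $\mathbf E^G\big[\prod_{i=1}^m \int_D \bar h^{\delta_k}(z)\varphi(z)\,dz\big]$ one expands into $\mathbf E^G[\prod_i \bar h^{\delta_k}(z_i)]$ and applies \cref{thm:coupling} at the points $z_1,\dots,z_m$: on the coupling event the branches through the $z_i$ agree with independent USTs inside macroscopic neighbourhoods of radius $R\gg \ve' r$, so the cross-terms factorize up to an error controlled by $\mathbf P^G(R\le \ve' r)\le (\ve')^c$; sending $\ve'\to 0$ after $k\to\infty$ kills the error. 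The resulting limiting moments are exactly the Gaussian moments of $\sqrt 2\, h_D^{\mathsf{GFF}}$ tested against $\varphi$ (the covariance being $2\times$ the Dirichlet Green's function, which is the known continuum computation from \cite{BLR16}), and since the GFF is determined by its moments this yields convergence in law.

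The step I expect to be the main obstacle — and the reason \cite{BLR16} devotes most of Section 5 to it — is the control of the microscopic winding near a typical point $z$ and near the boundary, i.e.\ showing that $h^{\delta_k}(z)-\mathbf E^G(h^{\delta_k}(z))$ has the right (vanishing, after smearing against $\varphi$) contribution from scales below the coupling scale $R_0^{\delta_k}$, and that its variance does not blow up. In the quenched setting one must check that the a priori bounds used there (Beurling, the RSW-based Harnack/hitting estimates, and the tightness/moment bounds on winding) hold on $\cG_0$ with the RSW scale replaced by $R_0^{\delta_k}=O(\delta_k(\log\delta_k^{-1})^{1/\alpha})$ rather than a fixed constant; since $R_0^{\delta_k}/\delta_k$ is only polylogarithmic, all the relevant error terms pick up at worst polylog factors in $\delta_k^{-1}$, which are absorbed exactly as in the proof of \cref{prop:tail}. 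The remaining bookkeeping — interchanging the $k\to\infty$ and $\ve'\to 0$ limits, and verifying uniform integrability so that convergence of moments upgrades to convergence in law — is routine and identical to \cite[Section 5]{BLR16}, so I would omit it, as the statement already does.
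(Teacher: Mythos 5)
Your proposal is correct and follows essentially the same route as the paper: the paper's own ``proof'' of \cref{thm:GFF} is only the short paragraph preceding the statement (it explicitly omits the details to avoid repeating \cite[Section 5]{BLR16}), and that paragraph asserts exactly your steps — build the full-measure set $\cG_0$ from \cref{a.s.,a.s.2}, (Vol) and item (iv), use the invariance principle together with the LERW $\to$ SLE$_2$ input of \cite{YY} for the macroscopic winding, and use \cref{thm:coupling} to kill the cross-terms in the joint moments. Your additional observation that the replaced scale $R_0^{\delta_k}=O(\delta_k(\log\delta_k^{-1})^{1/\alpha})$ only contributes polylogarithmic corrections, absorbed as in \cref{prop:tail}, is precisely the quenched-vs-annealed wrinkle the paper is implicitly relying on.
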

The above result can also be refined in various directions, we refer to \cite{BLR16} for more details.

We finish the article by stating a corollary of \cref{thm:GFF} for the special cases of random graphs we focussed on in this article, namely: Poisson Voronoi triangulation $\mathbb T$ in $\R^2$ and the infinite supercritical percolation cluster $\cC_\infty$ in $\Z^2$. We proved in this article that both $\mathbb T, \cC_\infty$ satisfies items $(i)$ and $ (iii)$ from \cref{sec:UST} and we proved $(ii)$ in \cref{thm:RSW_perc,thm:RSW_Poisson}. Finally, Quenched invariance principle is also known for both these cases, see \cref{rmk:quenched}. Thus we obtain
\begin{corollary}\label{cor:GFF}
The conclusions of \cref{thm:GFF} hold if $\mu$ is either the unique infinite cluster of supercritical percolation, or the Poisson Voronoi triangulation.
\end{corollary}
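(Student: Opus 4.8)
The plan is to simply verify that the two concrete models — the unique infinite supercritical percolation cluster $\cC_\infty \subset \Z^2$ and the Poisson Voronoi triangulation $\mathbb T \subset \R^2$ — fall under the hypotheses of Theorem \ref{thm:GFF}, so that the conclusion follows immediately. Theorem \ref{thm:GFF} requires exactly four inputs on the law $\mu$: (i) invariance under translations and $\pi/2$-rotations; (ii) the stretched-exponential RSW bound of the (RSW) assumption in \cref{sec:UST}; (iii) the volume upper bound (Vol), i.e. $\mu(|\Lambda_n \cap V(G)| \ge C_\mu n^2) \le e^{-\beta n^2}$; and (iv) the quenched invariance principle. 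The work is therefore a checklist, almost all of whose entries have already been established earlier in the paper.

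First I would dispatch (i): the law of $\cC_\infty$ is invariant under the symmetries of $\Z^2$ — in particular translations by $\Z^2$ and $\pi/2$-rotations about lattice points — because Bernoulli bond percolation on $\Z^2$ is, and conditioning on the (a.s. unique) infinite cluster preserves these symmetries; for $\mathbb T$, the underlying intensity-$1$ Poisson process on $\R^2$ is invariant under all rigid motions, hence certainly under translations and $\pi/2$-rotations, and the Voronoi/Delaunay construction is equivariant. Next, (ii) is precisely the content of \cref{thm:RSW_perc} (for $\cC_\infty$, with the rate $1 - e^{-c_p n^{\alpha}}$) and \cref{thm:RSW_Poisson} (for $\mathbb T$, with rate $1 - e^{-c n^{\alpha}}$); note that $c$-crossability of $\Lambda_{3n,n}$ forces $c$-crossability of the slightly fatter $\Lambda_{4n,n}$ after adjusting constants, so the form of (RSW) in \cref{sec:UST} is met. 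For (iii): for percolation, $|\Lambda_n \cap V(\cC_\infty)| \le |\Lambda_n \cap \Z^2| \le (2n+1)^2$ deterministically, so (Vol) holds trivially with $C_\mu = 9$ and any $\beta$; for the Voronoi triangulation, $|\Lambda_n \cap \Pi|$ is a Poisson random variable of mean $|\Lambda_n| = (2n)^2 = 4n^2$, and a standard upper-tail bound for the Poisson distribution gives $\P(|\Lambda_n \cap \Pi| \ge 8n^2) \le e^{-\beta n^2}$ for a suitable $\beta>0$. Finally, (iv) — the quenched invariance principle for the continuous-time random walk — is known in both cases: for the supercritical percolation cluster this is \cite[Theorem 1.1]{berger2007quenched}, and for the Poisson Voronoi triangulation it is \cite[Theorem 1.1]{rousselle15}, as recorded in \cref{rmk:quenched}.

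With all four hypotheses of \cref{thm:GFF} verified for each of $\mu = \mathrm{Law}(\cC_\infty)$ and $\mu = \mathrm{Law}(\mathbb T)$, the corollary is immediate: there is a collection $\cG_0$ of graphs with $\mu(\cG_0) = 1$ such that for every $G \in \cG_0$ the rescaled, centred dimer height function $\bar h^{\delta_k}$ converges in law, integrated against compactly supported test functions, to $\sqrt{2}\, h_D^{\mathsf{GFF}}$ along $\delta_k = 2^{-k}$. There is no genuine obstacle here; the only point requiring the least care is checking (iii) for $\mathbb T$, i.e. that the Poisson fluctuations of the vertex count in $\Lambda_n$ are controlled at the $e^{-\beta n^2}$ scale — but this is the elementary concentration bound for a Poisson variable of mean $\Theta(n^2)$ and is quantitatively far stronger than needed. (All the substantive mathematical content — the quantitative RSW estimates and the decorrelation coupling — has already been carried out in \cref{thm:RSW_perc,thm:RSW_Poisson,thm:coupling} and fed into \cref{thm:GFF}.)
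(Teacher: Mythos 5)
Your proposal is correct and matches the paper's argument, which is precisely this checklist: (i) and (iii) are noted as holding, (ii) is \cref{thm:RSW_perc,thm:RSW_Poisson}, and (iv) is the quenched invariance principle recorded in \cref{rmk:quenched}. Your observation about the $\Lambda_{3n,n}$ versus $\Lambda_{4n,n}$ aspect ratio is a small point the paper glosses over; passing from $3{:}1$ to $4{:}1$ crossings is by chaining (Markov property of the walk plus a union bound over the constantly many rectangles needed), not because fatter rectangles are directly easier, but this is routine and does not affect the stretched-exponential rate.
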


\bibliographystyle{abbrv}
\bibliography{RSW_quant}
\end{document}